 \makeatletter \@addtoreset{equation}{section}
\newtheorem{thm}{Theorem}[section]
\newtheorem{hyp}[thm]{Hypotheses}{\rm}
\newtheorem{hyp0}[thm]{Hypothesis}{\rm}
\newtheorem{lemm}[thm]{Lemma}
\newtheorem{prop}[thm]{Proposition}
\newtheorem{defi}[thm]{Definition}
\newtheorem{rmk}[thm]{Remark}{\rm}
\newtheorem{example}[thm]{Example}
\newcommand{\C}{{\mathbb C}}
\newcommand{\R}{{\mathbb R}}
\newcommand{\N}{{\mathbb N}}
\newcommand{\Rd}{\mathbb R^d}
\newcommand{\Om}{{\Omega}}
\newcommand{\A}{\mathcal{A}}
\newcommand{\B}{\mathcal{B}}
\newcommand{\bd}{\begin{defi}}
\newcommand{\ed}{\end{defi}}
\newcommand{\nnm}{\nonumber}
\newcommand{\be}{\begin{equation}}
\newcommand{\ee}{\end{equation}}
\newcommand{\barr}{\begin{array}}
\newcommand{\earr}{\end{array}}
\newcommand{\bmn}{\begin{eqnarray}}
\newcommand{\emn}{\end{eqnarray}}
\newcommand{\bnm}{\begin{eqnarray*}}
\newcommand{\enm}{\end{eqnarray*}}
\newcommand{\bln}{\begin{subequations}}
\newcommand{\eln}{\end{subequations}}
\newcommand{\ba}{\begin{align}}
\newcommand{\ea}{\end{align}}
\newcommand{\banm}{\begin{align*}}
\newcommand{\eanm}{\end{align*}}
\newcommand{\one}{\mbox{$1\!\!\!\;\mathrm{l}$}}
\title[Non autonomous parabolic problems]{Non autonomous parabolic problems with unbounded coefficients in unbounded domains}
\thanks{The authors are members of GNAMPA of the italian Istituto Nazionale di Alta Matematica. This work has been supported by
the M.I.U.R. Research Project PRIN 2010-2011 ``Problemi differenziali di evoluzione: approcci deterministici e stocastici e loro interazioni'' and
INdAM-GNAMPA Project 2014 ``Equazioni ellittiche e paraboliche a coefficienti illimitati''.}
\author{L. Angiuli}
\author{L. Lorenzi}
\address{L.A.: Dipartimento di Matematica e Fisica ``Ennio De Giorgi'', Universit\`a del Salento, Via Per Arnesano, I-73100 Lecce, Italy.}
\email{luciana.angiuli@unisalento.it}
\address{L.L.: Dipartimento di Matematica e Informatica, Universit\`a degli Studi di Parma, Parco Area delle Scienze 53/A, I-43124 Parma, Italy.}
\email{luca.lorenzi@unipr.it}
\keywords{nonautonomous second-order elliptic
operators, unbounded coefficients, evolution operators, compactness, invariant subspaces}
\subjclass[2000]{35K10, 35K15, 35B65}
\begin{document}

\begin{abstract}
Given a class of nonautonomous elliptic operators $\A(t)$ with unbounded coefficients, defined in $\overline{I \times \Om}$
(where $I$ is a right-halfline or $I=\R$ and $\Om\subset \Rd$ is possibly unbounded), we
prove existence and uniqueness of the evolution operator
associated to $\A(t)$ in the space of bounded and continuous functions, under Dirichlet and first order, non tangential homogeneous boundary conditions. Some qualitative properties of the solutions, the compactness of the evolution operator and some uniform gradient estimates are then proved.
\end{abstract}

\maketitle

\section{Introduction}
Parabolic Cauchy problems with unbounded coefficients set in unbounded domains, with sufficiently smooth boundary, have been studied
in the autonomous case both in the case of homogeneous Dirichlet \cite{ForMetPri04Gra} and Neumann \cite{BerFor04Gra,BerForLor07Gra}
boundary conditions. On the other hand, the nonautonomous counterpart
have been studied, to the best of our knowledge, only in the particular case  $\Omega=\Rd_+$, again only
under homogeneous Dirichlet and Neumann boundary conditions \cite{AngLor13OnT}.

This paper is devoted to continue the analysis started in \cite{AngLor13OnT}, studying parabolic nonautonomous boundary Cauchy problems
with unbounded coefficients in a greater generality, with respect to both the domain, where the Cauchy problems are set, and the boundary
conditions considered. More precisely, let
$\Om \subset \Rd$ be an unbounded open set with a boundary of class $C^{2+\alpha}$, for some $\alpha\in (0,1)$, and let $I\subset \R$ be an open right halfline (possibly $I=\R$). For any fixed $s \in I$ and any $f\in C_b(\Omega)$ (the space of bounded and continuous functions on $\Omega$),
we consider the nonautonomous Cauchy problem
\begin{equation}
\label{NAPC}
\left\{
\begin{array}{ll}
D_tu(t,x)=(\A u)(t,x), &t\in(s,+\infty),\,x\in \Om,\\[1mm]
({\mathcal B}u)(t,x)=0,&t\in(s,+\infty),\,x\in \partial\Om,\\[1mm]
u(s,x)= f(x),& x\in \Om.
\end{array}
\right.
\tag{$P_{\mathcal B}$}
\end{equation}
The families of nondegenerate elliptic operators $\{\A(t)\}_{t \in I}$  and of boundary operators
$\{{\mathcal B}(t)\}_{t \in I}$ act on smooth functions $\zeta$ as follows:
\begin{equation}
(\A(t)\zeta)(x)=\sum_{i,j=1}^d q_{ij}(t,x)D_{ij}\zeta(x)+
\sum_{i=1}^d b_i(t,x)D_{i}\zeta(x)-c(t,x)\zeta(x),
\label{operator}
\end{equation}
for any $(t,x)\in I\times\Omega$, and
\begin{equation}
({\mathcal B}(t)\zeta)(x)=\sum_{i=1}^d\beta_i(t,x)D_i\zeta(x)+\gamma(t,x)\zeta(x),\qquad\;\,(t,x)\in I\times\partial\Omega.
\label{OpB}
\end{equation}
The coefficients of the previous operators are smooth enough functions, and all of them
but $\beta=(\beta_1,\ldots,\beta_d)$ may be unbounded; function $\beta$ either everywhere differs from $0$ on $\partial\Omega$ or therein identically vanishes.
In the first case, we assume the usual non-tangential condition, in the latter one, we assume that
$\gamma\equiv 1$ so that ${\mathcal B}\zeta$ is the trace of $\zeta$ on $\partial\Omega$.

We first prove existence and uniqueness of a bounded classical solution of problem \eqref{NAPC} (see Definition \ref{defi}).
The case $\gamma\ge 0$ requires rather weak assumptions on the coefficients of
the operators ${\mathcal A}(t)$ and ${\mathcal B}(t)$. No growth assumptions are assumed on the diffusion and drift coefficients
of the operators $\A(t)$, whereas the potential is assumed to be bounded from below, this condition being
not surprising at all since, as the autonomous case reveals: without any lower bound on the potential no bounded solutions
to problem \eqref{NAPC} exist in general.
Further, the existence of a so-called Lyapunov function $\varphi$, associated with the pair $(\mathcal{A}(t), \mathcal{B}(t))$
(cf. Hypothesis \ref{hyp-3}) is assumed, which serves as a fundamental tool to prove a maximum principle, which yields
uniqueness of the solution to problem \eqref{NAPC}. When $\gamma$ takes also negative values we assume an extra condition,
which is stated in terms of another Lyapunov function.
The existence and the uniqueness of a classical solution to problem \eqref{NAPC} allow us to define an evolution operator
$G_{\mathcal{B}}(t,s)$ of bounded linear operators in $C_b(\Om)$ and to prove some remarkable continuity properties that this evolution operator enjoys. As a consequence of the Riesz representation theorem and the continuity property of the evolution operator, we can show
that, for any $(t,s)\in\Lambda:=\{(t,s)\in I\times I:\,t>s\}$ and any $x\in\Omega$, there exists a finite Borel measure $g_{\mathcal B}(t,s,x,dy)$ such that
\begin{equation}\label{repr_intro}
(G_{\mathcal B}(t,s)f)(x)=\int_{\Om}f(y)g_{\mathcal B}(t,s,x,dy),\qquad\;\,f\in C_b(\Omega).
\end{equation}
Under an additional smoothness assumption on the diffusion coefficients we prove that
$G_{\mathcal B}(t,s)f$ admits an integral representation by means of a Green function $g_{\mathcal B}:\Lambda\times \Om\times \Om\to (0,+\infty)$, i.e., $g_{\mathcal B}(t,s,x,dy)=g_{\mathcal B}(t,s,x,y)dy$ for any $(t,s,x,y)\in\Lambda\times\Omega\times\Omega$.
For any fixed $s\in I$ and almost any $y \in \Om$, the function $g_{\mathcal B}(\cdot,s,\cdot,y)$ is smooth, satisfies $D_t g_{\mathcal B}-\A(t)g_{\mathcal B}=0$ in $(s,+\infty)\times \Om$.

Formula \eqref{repr_intro} plays a crucial role in the study of the compactness of the operator $G_{\mathcal B}(t,s)$ in $C_b(\Om)$. Indeed,
as the proof of Theorem \ref{thm-comp} reveals, the compactness of the operators $G_{\mathcal B}(t,s)$ in $C_b(\Om)$, for $(t,s)\in\Lambda\times J^2$, $J$ being a bounded interval,
follows from the tightness of the family of measures $\{g_{\mathcal B}(t,s,x,dy), \, x \in \Om\}$ for any $(t,s)\in\Lambda\cap J^2$. In view of this fact, a sufficient condition is then provided to guarantee the tightness of
the previous family of measures. Our result extends the results obtained in \cite{AngLor10Com,Lun10Com} in the case when $\Om=\Rd$.

Next, when the boundary operator ${\mathcal B}$ is independent of $t$, under some growth assumptions on the coefficients $q_{ij}$, $b_i$ and $c$ at infinity
and assuming that they are bounded in a small neighborhood of $\partial \Om$, we prove an uniform gradient estimate for $G_{\mathcal B}(t,s)f$. More precisely,
we show that for any $T>s \in I$, there exists
a positive constant $C_{s,T}$ such that
\begin{align}\label{graest_intro}
\|\nabla_x G_{\mathcal B}(t,s)f\|_{\infty} \le \frac{C_{s,T}}{\sqrt{t-s}}\|f\|_{\infty},\qquad\;\, t\in (s,T),
\end{align}
for any $f \in C_b(\Om)$. Estimate \eqref{graest_intro} (which can be then extended, by the evolution law, to all $t \in (s,+\infty)$) is classical when the coefficients of $\mathcal{A}(t)$ are bounded and $\Om$
is an open set with sufficiently smooth boundary, either bounded or unbounded (see \cite{Lun95Ana}).
Recently, it has been proved for the semigroup $T(t)$ associated in $C_b(\Omega)$
to autonomous elliptic operators with unbounded coefficients, both in the case of homogeneous
Neumann (first in convex sets \cite{BerFor04Gra} and, then,  in the general case \cite{BerForLor07Gra})
and Dirichlet boundary conditions  \cite{ForMetPri04Gra}.
Very recently, we proved estimate \eqref{graest_intro} for the solution to problem \eqref{NAPC} in $\Rd_+$
when homogeneous Dirichlet and Neumann boundary conditions are prescribed on $\partial\Rd_+$.
The simple geometry of $\Rd_+$ and suitable assumptions on the coefficients of the operator $\A(t)$,
allowed to extend these latter ones to $\Rd$ and to reduce the problem to the whole space $\Rd$, where gradient estimates were already known (\cite{KunLorLun09Non}). A symmetry argument was then used to come back to the Neumann and Dirichlet Cauchy problems set in $\Rd_+$.

In our situation the key tools to prove \eqref{graest_intro} are the Bernstein method,
the maximum principle in Proposition \ref{nsMP} and the geometric Lemma \ref{geometric} which allows to locally transform the boundary Cauchy problem \eqref{NAPC} into a Cauchy problem in the halfspace $\Rd_+$ where homogeneous Robin boundary conditions are prescribed. Bernstein method works very well in the whole space and it is easy to explain: one considers the function $t\mapsto v(t,\cdot)=(G(t,s)f)^2+a(t-s)|\nabla_xG(t,s)f|^2$ and shows that, under suitable assumptions
and a suitable choice of the positive parameter $a$, $D_tv-\A(t)v\le 0$. A variant of the maximum principle
reveals that the supremum of function $v$ is attained on $\{s\}\times\Rd$, and the gradient estimate follows at once.
When $\Rd$ is replaced by an open set $\Omega$, things become much more difficult. Indeed, the supremum of $v$
could be attained on $\partial\Omega$. Hence, one needs to bound the suprema of $v$ on $\partial\Omega$.
In the autonomous case, this has been done in the case of Dirichlet and Neumann boundary conditions.
In the first case an a priori gradient estimate on the boundary of $\Om$ has been proved by a comparison argument,
which reveals that therein the function $t\mapsto\sqrt{t}|\nabla_x T(t)f|$
can be bounded uniformly by a constant times the sup-norm of $f$. The argument in \cite{ForMetPri04Gra} can not be adapted
to the case of different boundary conditions. Neumann boundary conditions have been considered first in convex domains  (see \cite{BerFor04Gra}), where the geometry
of $\Omega$ shows that the normal derivative of
$|\nabla_x T(t)f|^2$ is nonpositive, so that the normal derivative of $v$ is nonpositive on $\partial\Omega$ as well and, consequently,
the supremum of $v$ is attained on $\{0\}\times\Omega$. When $\Omega$ is nonconvex,
the normal derivative of $|\nabla_x T(t)f|^2$
does not need to be nonnegative. But, as in \cite{BerForLor07Gra}, replacing $v$ by the function
$t\mapsto w(t,\cdot)=(T(t)f)^2+am|\nabla_x T(t)f|^2$ for a suitable function $m$, which takes into account
the curvatures of $\partial\Omega$, one can still prove that $D_tw-\A w$ and the normal
derivative of $w$, are nonnegative in $\Omega$ and $\partial\Omega$, respectively.

Clearly, for more general unbounded domains and more general boundary conditions, the same arguments do not work, therefore we need to develop new strategies
to prove the uniform gradient estimate \eqref{graest_intro}. Here, the idea is to use the regularity of the domain to go back by means of local charts
to problems defined in $\Rd_+$ or in $\Rd$. Assuming more smoothness on the domain $\Om$ and the vector $\beta$, we determine coordinate
transformations which, locally transform the homogeneous boundary condition ${\mathcal B}u=0$
on the boundary $\partial \Om$ to an homogeneous Robin boundary condition on $\R^{d-1}\times\{0\}$.
Thus, under the assumption that the coefficients of $\A(t)$ are bounded
only in a neighborhood of the boundary $\partial \Om$, we prove an uniform gradient estimates in a small strip $\Omega_{\delta}$ near the boundary.
Finally, some growth assumptions on the diffusion coefficients and the potential term and a quite standard dissipativity
condition on the drift term $b$, are enough to show that \eqref{graest_intro} is satisfied also in $\Omega\setminus\Omega_{\delta}$.
We point out that, differently from \cite{BerFor04Gra,BerForLor07Gra,ForMetPri04Gra}, we do not assume that the
diffusion coefficients $q_{ij}$ are globally bounded together with their spatial gradients. Moreover, our results seem to be new also in the autonomous case when ${\mathcal B}$ is a general
first-order boundary operator. In particular, we can cover also the case when $\gamma$ changes sign on $\partial\Omega$.

The special case when $\Om$ is convex and homogeneous Neumann boundary conditions are prescribed, can be treated and estimate \eqref{graest_intro} can be proved without assuming any additional smoothness assumption on the domain and any hypotheses of boundedness for the coefficients of $\A(t)$ in a neighborhood of the boundary. This can be done adapting the arguments used in the autonomous case, described here above.

Also when $\Om=\Rd_+$ and homogeneous Robin boundary conditions
are prescribed on $\R^{d-1}\times \{0\}$, we do not need to assume that the drift term $b$ and the potential term $c$ are bounded. Indeed,
a simple trick allows us to transform homogeneous Robin boundary condition into homogeneous Neumann condition on $\partial \Rd_+$. Hence, we are reduced to a problem set in a convex set with Robin boundary conditions, to which we can apply the already established results.

The paper is split into section as follows. In Section \ref{sect_main} we state the main
assumptions on the coefficients of the operators $\A(t)$ and $\B(t)$ and on the domain $\Om$,
recalling also some consequences of the smoothness of the domain. In Section \ref{sect_exi},
we first prove a maximum principle for solutions to the problem \eqref{NAPC}, which are continuous
in $\big([s,+\infty)\times \overline{\Om}\big)\setminus \big(\{s\}\times \partial \Om\big)$. Then, we
construct the solution to the problem \eqref{NAPC}. In Section \ref{sect-evol} we introduce the evolution
operator $G_{\mathcal B}(t,s)$ and we investigate
on some of its qualitative properties, such as compactness. Section \ref{sect_graest} is devoted to prove the uniform gradient
estimates \eqref{graest_intro} and in Section \ref{sect_ex} we provide some examples of operators
to which our results can be applied. The appendix collects some technical results used in the paper.
\medskip

\noindent
{\bf Notations.}
For any open set (or the closure of an open set)
${\mathcal O}$, any interval $J\subset\R$ and any $\delta>0$, we set
${\mathcal O}_{\delta}:=\{x\in \overline{{\mathcal O}}:\,r_{\mathcal O}(x)<\delta\}$ (where
$r_{\mathcal O}(x)={\rm dist}(x,\partial\mathcal O)$) and  ${\mathcal O}_J:=J\times {\mathcal O}$. Further,
by $\nu(x)$ we mean the outward unit normal to $\partial {\mathcal O}$ at $x$.

We assume that the reader is familiar with the spaces $C^k({\mathcal O})$ ($k\ge 0$)
and $C^{\alpha,\beta}({\mathcal O}_J)$ ($\alpha,\beta\ge 0$).
By $C^k_b({\mathcal O})$ we denote the subspace of $C^k({\mathcal O})$ consisting of functions which are bounded
together with all existing derivatives.
We use the
subscript ``$c$'' (resp. ``$0$'') for spaces of functions  with compact support
(resp. for spaces of functions vanishing on $\partial {\mathcal O}$ and at infinity).
When $k\in (0,1)$, we write $C^k_{\rm loc}({\mathcal O})$ to denote the space of all $f\in C({\mathcal O})$
which are H\"older continuous in any compact set of ${\mathcal O}$. Analogously, we define the spaces
$C^{\alpha/2,\alpha}_{\rm loc}({\mathcal O}_J)$ and $C^{1+\alpha/2,2+\alpha}_{\rm loc}({\mathcal O}_J)$ ($\alpha\in (0,1)$).

The notations $D_tf:=\frac{\partial f}{\partial t}$,
$D_if:=\frac{\partial f}{\partial x_i}$, $D_{ij}f:=\frac{\partial^2f}{\partial x_i\partial x_j}$
are extensively used, as well as the notation $J_xf$, to denote the Jacobian matrix, with respect to the spatial variables,
of the function $f:{\mathcal O}_J\to\Rd$.
$\chi_A$ denotes the characteristic function of the set $A\subset {\mathcal O}$ and $\one:=\chi_{\mathcal O}$.
The Euclidean ball with center at $x_0$ and radius $R>0$ is denoted by $B_{R}(x_0)$, $B_R:=B_R(0)$ and $B_R^+:=B_R\cap\Rd_+$.
Similarly, ${\mathcal O}^R$ denotes the set ${\mathcal O}\cap B_R$. Occasionally,
we find it convenient to split $\Rd\ni x=(x',x_d)$ with $x_d\in\R$. Finally, $a^+:=\max\{a,0\}$ for any $a\in\R$.

\section{Main assumptions and preliminaries}\label{sect_main}

Let $I\subset \R$ be an open right halfline (possibly $I=\R$) and $\Om$ be a domain of
$\R^d$. Let us introduce our standing assumptions on the domain $\Omega$ and on the coefficients of the operators $\A(t)$ in
\eqref{operator}:
\begin{hyp}\label{hyp1}
\begin{enumerate}[\rm (i)]
\item
$\partial \Om$ is uniformly of class $C^{2+\alpha}$ for some $0<\alpha<1$;
\item
$q_{ij}$, $b_{i}$ and $c$
belong to $C^{\alpha/2,\alpha}_{\rm{loc}}(\overline{\Om_I})$ for every $i,j=1,\dots,d$;
\item
$c_0:= \inf_{\Om_I}c\ge 0$;
\item
$Q$ is uniformly elliptic, i.e., for every $(t,x)\in \Om_I$, the matrix $Q(t,x)$
is symmetric and there exists a function $\eta:\Om_I\to\R^+$ such that
$0<\eta_0:=\inf_{\Om_I}\eta$ and
$\langle Q(t,x)\xi,\xi\rangle\geq\eta(t,x)|\xi|^2$ for any $\xi\in \R^d$ and $(t,x)\in \Om_I$.
\end{enumerate}
\end{hyp}
\begin{rmk}\label{local charts}{\rm
\begin{enumerate}[\rm (a)]
\item
Hypothesis \ref{hyp1}(i) is standard when problems are defined on unbounded domains. It means that
\begin{enumerate}[\rm (i)]
\item
there exist $R>0$, a (at most countable) collection of open balls $B_R(x_h)=:V_h$, $h \in \N$, covering $\partial \Om$, and $k\in\N$
such that $\sum_{h=1}^{+\infty}\chi_{V_h}\le k$ in $\R^d$, i.e., $\bigcap_{h\in H}V_h= \varnothing$ if $H\subset \N$ contains more than $k$ elements;
\item
there exist coordinate transformations  $\psi_h:V_h\to B_1$ ($h \in \N$), which are $C^{2+\alpha}$-diffeomorphisms such that
$\psi_h(V_h\cap \Om)=B_1^+$, $\psi_h(V_h\cap \partial \Om)=B_1\cap\partial\Rd_+$ for each $h$,
and
$\sup_{h \in \N}\left(\|\psi_h\|_{C^{2+\alpha}(V_h)}+\|\psi_h^{-1}\|_{C^{2+\alpha}(V_h)}\right)<+\infty$;
\item
there exists $\varepsilon>0$ such that $\bigcup_{h\in\N}B_{R/2}(x_h)\supset\Omega_{\varepsilon}$.
\end{enumerate}
\item
The smoothness of $\partial\Omega$ implies that the distance function $r_{\Omega}$
belongs to $C^2_b(\Om_\delta)$ for some $\delta>0$.
For any $x \in \Om_\delta$, it holds that $\nabla r_{\Omega}(x)= -\nu(\pi(x))$, where $\pi(x)$ is the projection of $x$ on
$\partial \Om$. Finally, the equiboundedness of the $C^{2+\alpha}$-norms of $\psi_h$ and $\psi_h^{-1}$, shows that
$\kappa= \inf_{x \in \partial \Om}\left\{\langle J \nu(x) \tau, \tau\rangle:\,\,|\tau|=1,\,\langle\tau, \nu(x)\rangle =0\right\}\in \R$.
\item
Since the last component $\psi_h^d$ of the function $\psi_h$ ($h\in\N$) identically vanishes on
$\partial\Omega\cap B_R(x_h)$ and it is positive inside $\Omega\cap V_h$, $\nabla\psi_h^d=-|\nabla_x\psi_h^d|\nu$ in $\partial\Omega\cap B_R(x_h)$.
\end{enumerate}
}\end{rmk}

As far as the boundary operators ${\mathcal B}(t)$ in \eqref{OpB} are concerned, when $\beta\equiv 0$, we assume that $\gamma\equiv 1$
in order to recover the Cauchy Dirichlet problem. On the other hand when $\beta\not\equiv 0$, we assume the following
assumptions on the coefficients of ${\mathcal B}(t)$.
\begin{hyp}\label{hyp1-bc}
\begin{enumerate}[\rm (i)]
\item
$\beta_i$ $(i=1,\ldots,d)$ and $\gamma$ belong to $C^{(1+\alpha)/2,1+\alpha}_{\rm loc}(\overline{I}\times \partial\Omega)$;
\item
$\gamma$ is bounded from below and $|\beta|\equiv 1$ in $I\times \partial \Om$;
\item
$\inf_{(t,x)\in [a,b]\times\partial\Omega}\langle\beta(t,x),\nu(x)\rangle>0$ for any $[a,b]\subset I$.
\end{enumerate}
\end{hyp}

To guarantee the uniqueness of the bounded classical solution to the problem \eqref{NAPC}
(see Definition \ref{defi}), we assume the following condition.
\begin{hyp}
\label{hyp-3}
\begin{enumerate}[\rm (i)]
\item
For any bounded interval $J\subset I$ there exist a positive function
$\varphi=\varphi_J\in C^2(\overline{\Om_J})$
and a positive number $\lambda=\lambda_J$ such that
$\varphi$ blows up as $|x|\to +\infty$, uniformly with respect to $t\in J$,
and $D_t \varphi-\mathcal{A}\varphi+\lambda\varphi> 0$ in $\Om_J$.
\item
When $\beta\not\equiv 0$, we require in addition that
${\mathcal B}\varphi\ge 0$ in $J\times\partial\Omega$.
\end{enumerate}
\end{hyp}

\begin{rmk}
\label{rem-2.5}
{\rm Actually, the condition on the sign of $c_0$ is not restrictive; Hypotheses \ref{hyp1}(iii) can be replaced by the assumption that $c_0>-\infty$.
Indeed, if $c_0<0$, and $u$ solves problem \eqref{NAPC} then the function $(t,x)\mapsto \tilde{u}(t,x)= e^{c_0(t-s)}u(t,x)$, which has the same regularity as $u$,
satisfies $D_t\tilde{u}-\A_0\tilde{u}=0$, where $\A_0 u= \A u+ c_0 u$ has a nonnegative zero-order coefficient.
Moreover, $\A_0$ satisfies Hypotheses \ref{hyp-3} with the same
Lyapunov function $\varphi$ and the same positive constant $\lambda$.}
\end{rmk}

\section{Existence and uniqueness}
\label{sect_exi}
Here, we prove existence and uniqueness of the bounded classical solution to problem \eqref{NAPC}. Throughout this section, we denote by $S$ the
set $\{s\}\times\partial\Omega$.
\begin{defi}
\label{defi}
A function $u$ is called a  bounded classical solution
of the problem \eqref{NAPC} if
$u\in C^{1,2}(\Omega_{(s,+\infty)})\cap C_b(\overline{\Omega_{[s,+\infty)}}\setminus S)$ and satisfies \eqref{NAPC}.
\end{defi}

\subsection{The case when $\gamma\ge 0$}
\label{subsect-3.1}
The uniqueness of the classical solution to problem \eqref{NAPC} is a consequence of suitable maximum principle.

\begin{prop}\label{nsMP}
Let $T>s\in I$ and $u\in C^{1,2}(\Om_{(s,T)})\cap C_b(\overline{\Om_{(s,T)}}\setminus S)$
satisfy
\begin{equation}
\left\{
\begin{array}{ll}
D_t u(t,x)- (\A u)(t,x)\leq 0, \quad& (t,x)\in \Om_{(s,T)},\\[1mm]
({\mathcal B}u)(t,x)\leq 0, \quad & (t,x)\in (s,T)\times\partial\Omega,\\[1mm]
u(s,x)\le 0, & x\in\Omega.
\end{array}
\right.
\label{studente}
\end{equation}
Then, $u\leq 0$ in  $\Om_{(s,T)}$.
\end{prop}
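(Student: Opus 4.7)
The strategy is a Phragmén--Lindelöf-type maximum principle, using the Lyapunov function $\varphi=\varphi_{[s,T]}$ from Hypothesis \ref{hyp-3} to control the behavior of $u$ at spatial infinity. For $\varepsilon>0$, introduce the perturbation
\[
v_\varepsilon(t,x):=u(t,x)-\varepsilon e^{\lambda(t-s)}\varphi(t,x),\qquad (t,x)\in\overline{\Om_{(s,T)}}\setminus S,
\]
with $\lambda=\lambda_{[s,T]}$. Using $D_t\varphi-\A\varphi+\lambda\varphi>0$ together with $c_0\ge 0$, a direct computation produces the \emph{strict} parabolic inequality $D_tv_\varepsilon-\A v_\varepsilon<0$ in $\Om_{(s,T)}$. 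On the lateral boundary, Hypothesis \ref{hyp-3}(ii) in the oblique case (respectively the Dirichlet identity $\mathcal{B}\varphi=\varphi>0$ when $\beta\equiv 0$) yields $\mathcal{B}v_\varepsilon\le 0$, with strict sign in the Dirichlet subcase. At $t=s$, the positivity of $\varphi$ gives $v_\varepsilon(s,\cdot)\le-\varepsilon\varphi(s,\cdot)<0$ on $\Om$. Finally, since $\varphi$ blows up at spatial infinity uniformly in $t\in[s,T]$ and $u$ is bounded, $v_\varepsilon(t,x)\to-\infty$ as $|x|\to+\infty$ uniformly on $[s,T]$. The aim is to show $v_\varepsilon\le 0$; sending $\varepsilon\downarrow 0$ then delivers the proposition.

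Assume for contradiction that $M:=\sup v_\varepsilon>0$. The coercivity of $\varphi$ confines any maximizing sequence to a bounded cylinder, so we can extract $(t_n,x_n)\to(t^*,x^*)\in\overline{\Om_{[s,T]}}$ with $v_\varepsilon(t_n,x_n)\to M$. If $(t^*,x^*)\notin S$, continuity forces $v_\varepsilon(t^*,x^*)=M>0$ and three cases arise. The possibility $t^*=s$, $x^*\in\Om$ is ruled out by the strict initial inequality $v_\varepsilon(s,\cdot)<0$. If $t^*>s$ and $x^*\in\Om$, the point is a parabolic interior maximum, so $D_tv_\varepsilon\ge 0$, $\nabla v_\varepsilon=0$, $D^2v_\varepsilon\le 0$; combined with $c\ge 0$, this gives $(D_tv_\varepsilon-\A v_\varepsilon)(t^*,x^*)\ge c(t^*,x^*)v_\varepsilon(t^*,x^*)\ge 0$, contradicting the strict parabolic inequality. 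If $t^*>s$ and $x^*\in\partial\Om$, the Dirichlet subcase is immediate from $v_\varepsilon<0$ on the lateral boundary; in the oblique subcase, the strict parabolic inequality activates the parabolic Hopf boundary-point lemma (applicable by the uniform interior-sphere condition granted by Hypothesis \ref{hyp1}(i)), yielding $\partial_\nu v_\varepsilon(t^*,x^*)>0$. Combined with the tangential criticality $\nabla_\tau v_\varepsilon(t^*,x^*)=0$, $\langle\beta,\nu\rangle>0$ from Hypothesis \ref{hyp1-bc}(iii), $\gamma\ge 0$ and $v_\varepsilon(t^*,x^*)>0$, one obtains $\mathcal{B}v_\varepsilon(t^*,x^*)>0$, again a contradiction.

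The main technical obstacle is the residual scenario $(t^*,x^*)\in S$, where $v_\varepsilon$ is not assumed continuous. I would rule it out by studying the spatial supremum $m(t):=\sup_{x\in\overline{\Om}}v_\varepsilon(t,\cdot)$ on $(s,T]$, which is finite by the uniform spatial decay. The case analysis above shows that whenever $m(t)>0$, the supremum is attained at some interior point $x(t)\in\Om$ where $\nabla_xv_\varepsilon(t,x(t))=0$ and $D_x^2v_\varepsilon(t,x(t))\le 0$; the strict parabolic inequality then forces $D_tv_\varepsilon(t,x(t))<0$, and a standard envelope-type argument makes $m$ strictly decreasing on any subinterval of $(s,T]$ where it remains positive. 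Thus $M=\limsup_{t\downarrow s}m(t)$, realized along a sequence $(t_n,x(t_n))\to(s,x^{**})$ with $x(t_n)\in\Om\cap B_R$: if $x^{**}\in\Om$, continuity of $v_\varepsilon$ at $(s,x^{**})$ contradicts $v_\varepsilon(s,x^{**})<0$, while if $x^{**}\in\partial\Om$, passing to the limit in the interior criticality $\nabla_xv_\varepsilon(t_n,x(t_n))=0$ and reapplying Hopf to the limiting boundary configuration delivers the final contradiction. Hence $M\le 0$, so $v_\varepsilon\le 0$, and letting $\varepsilon\downarrow 0$ concludes the proof.
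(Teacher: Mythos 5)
Your argument up to the corner case is sound and essentially reproduces Step 1 of the paper's proof: your perturbation $u-\varepsilon e^{\lambda(t-s)}\varphi$ is equivalent to the paper's $e^{-\lambda(t-s)}u-n^{-1}\varphi$, and the trichotomy (initial time, interior parabolic maximum killed by the strict inequality and $c\ge 0$, lateral boundary killed by the Dirichlet sign or the Hopf lemma plus $\langle\beta,\nu\rangle>0$, $\gamma\ge0$) is exactly the paper's, only organized as a direct contradiction instead of a continuation in time. The genuine gap is precisely where you locate the ``main technical obstacle'': the points of $S=\{s\}\times\partial\Omega$, where $u$ is not assumed continuous, and your proposed resolution does not work. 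First, in the oblique case the claim that a positive time-slice supremum $m(t)$ must be attained at an interior point $x(t)\in\Om$ is unjustified: the parabolic Hopf lemma requires the point to be a maximum over a space-time neighbourhood, whereas $x(t)$ only maximizes the single slice $v_\varepsilon(t,\cdot)$, and a spatial boundary maximizer at a fixed time yields no contradiction (this also undermines the subsequent ``envelope'' monotonicity of $m$). Second, and more fundamentally, the final step --- ``passing to the limit in the interior criticality $\nabla_x v_\varepsilon(t_n,x(t_n))=0$ and reapplying Hopf to the limiting boundary configuration'' at $(s,x^{**})\in S$ --- has no content: $v_\varepsilon$ is neither continuous nor a subsolution in any neighbourhood of such a point, the differential inequality in \eqref{studente} holds only for $t>s$, and since $u$ is only $C^{1,2}$ in the open cylinder there is no uniform control of $\nabla_x v_\varepsilon$ as $t_n\downarrow s$, so the gradients need not converge and there is no limiting configuration to which any boundary-point lemma applies. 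In effect you have reduced the proposition to proving $\limsup_{t\downarrow s}\sup_{\Om} v_\varepsilon(t,\cdot)\le 0$, which is the whole difficulty, and the sketch does not establish it.

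This is exactly why the paper's Step 2 introduces an explicit barrier rather than a limiting argument: with $\omega=\vartheta r_{\Omega}+1-\vartheta$ a regularized distance to $\partial\Om$, the function $\psi_{\varsigma}(t,x)=(t+\varsigma-s)^{-\varepsilon\eta_0}\exp\big(t+\varsigma-s-\varepsilon\omega^2(x)/(t+\varsigma-s)\big)$ is shown, for a suitable $\varepsilon$ depending on the coefficients near $\partial\Om$, to satisfy $D_t\psi_{\varsigma}-\A\psi_{\varsigma}+\lambda\psi_{\varsigma}>0$ and ${\mathcal B}\psi_{\varsigma}>0$, while $\varsigma^{\varepsilon\eta_0}\psi_{\varsigma}>1$ near $\partial\Om$ for $t$ close to $s$; subtracting $M_n\varsigma^{\varepsilon\eta_0}\psi_{\varsigma}$ absorbs the possible bad behaviour of the perturbed function on $S$ for short times, the continuous-case argument (your first part, or the paper's Step 1) then applies on $[s+\tau,T]$, and one concludes by letting $\varsigma\to 0^+$. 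Some construction of this type --- a supersolution with the correct boundary sign that blows up, relative to $t-s$, near $\partial\Om$ as $t\downarrow s$ --- is the missing ingredient in your proof.
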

\begin{proof}
Let $\lambda=\lambda_{[s,T]}$ and $\varphi=\varphi_{[s,T]}$ be the constant and the function in Hypothesis \ref{hyp-3}.
Up to replacing $\lambda$ with a larger value, if needed, we can assume that $D_t\varphi-\A\varphi+\lambda\varphi>0$ in $\Omega_{(s,T)}$.
 To prove that $u\le 0$ in $\Omega_{(s,T)}$, for any $n\in\N$ we introduce the function $v_n$, defined by
$v_n(t,x)=e^{-\lambda(t-s)}u(t,x)-n^{-1}\varphi(t,x)$ for any $(t,x)\in\overline{\Omega_{(s,T)}}\setminus S$,
and prove that $v_n$ is nonpositive. Then, letting $n\to +\infty$ we conclude that $u$ is nonpositive
as well.

Since $\varphi$ tends to $+\infty$ as $|x|\to +\infty$, uniformly with respect to $t\in [s,T]$, and $u$ is bounded,
$v_n$ tends to $-\infty$ as $|x|\to +\infty$, uniformly with respect to $t\in [s,T]$, for any $n\in\N$.
We can thus fix $R>0$ large enough such that $v_n<0$ in $[s,T]\times (\Omega\setminus B_R)$.
It thus follows that we just need to prove that $v_n\le 0$ in $(s,T]\times\Omega^R$.

We split the rest of the proof in two steps. In the first one, we assume that $u$ is continuous in the whole of $\overline{\Omega_{(s,T)}}$.
Then, in Step 2, we consider the general case.

{\em Step 1.} Since $u$ is continuous in $\overline{\Omega_{(s,T)}}$, $v_n$ satisfies
\begin{equation}
\label{1}
\left\{
\begin{array}{ll}
D_tv_n(t,x)-({\mathcal A}v_n)(t,x)+\lambda v_n(t,x)< 0,\qquad\;\,&(t,x)\in (s,T]\times\Omega^R,\\[1mm]
({\mathcal B}v_n)(t,x)\le 0,\qquad\;\,&(t,x)\in (s,T]\times\partial_1\Omega^R,\\[1mm]
v_n(t,x)< 0,\qquad\;\,&(t,x)\in (s,T]\times\partial_2\Omega^R,\\[1mm]
v_n(s,x)<0,\qquad\;\,& x\in\overline{\Omega^R},
\end{array}
\right.
\end{equation}
where $\partial\Omega_R=\partial_1\Omega^R\cup \partial_2\Omega^R:=(\partial\Omega\cap B_R)\cup (\overline\Omega\cap\partial B_R)$.
We follow the lines in the proof of \cite[Thm. 2.16]{Fri64Par}. For this purpose, we introduce the set
${\mathcal J}_n:=\{r\in [s,T]: v_n<0 \textrm{ in } \overline{\Omega^R_{(s,r)}}\}$,
which contains $s$. Since $u$ and $\varphi$ are continuous in $\overline{\Omega_{(s,T)}}$,
the function $v_n$ is uniformly continuous in $\overline{\Omega^R_{(s,T)}}$.
This implies that ${\mathcal J}_n$ is an interval and $\sup {\mathcal J}_n>s$.
Let us denote by $\tau_n$ the supremum of ${\mathcal J}_n$ and prove that $\tau_n=T$.
By contradiction, we assume that $\tau_n<T$.
Then, by continuity $v_n(\tau_n,\cdot)\le 0$ in $\overline{\Omega^R}$ and there exists $x_n\in\overline{\Omega^R}$
such that $v_n(\tau_n,x_n)=0$.
The point $(\tau_n,x_n)$ turns out to be the maximum point of the restriction of $v_n$ to
$\overline{\Omega^R_{(s,\tau_n)}}$. Moreover, $x_n$ can not belong to $\Omega^R$, otherwise we would have
$({\mathcal A}v_n)(\tau_n,x_n)-\lambda v_n(\tau_n,x_n)\le 0$ and $D_tv_n(\tau_n,x_n)\ge 0$, thus
contradicting \eqref{1}.
Hence, $x_n\in \partial \Om^R$. Actually, $x_n$ can not belong
to $\partial_2\Omega^R$ and, clearly, it can not belong to $\partial_1\Omega^R$, if ${\mathcal B}\equiv I$. Indeed,
in this case ${\mathcal B}v_n=-n^{-1}\varphi$, which is negative (see Hypothesis \ref{hyp-3}(i)).
On the other hand, if ${\mathcal B}$ is a first-order boundary operator and $x_n\in\partial_1\Omega_R\subset\partial\Omega$, then we would have
$\langle\beta(\tau_n,x_n),\nabla_x v_n(\tau_n,x_n)\rangle >0$,
since at each point of $\partial\Omega$ the interior sphere condition is satisfied (see e.g., \cite[Thm. 3.7]{protter}).
But this contradicts the boundary condition in \eqref{1}.
We thus conclude that $\tau=T$ so that $v_n$ is negative in $\overline{\Omega^R_{(s,T)}}$.

\emph{Step 2.} We now consider the general case when $u$ is not continuous on $S$. Since the above arguments do not work, we use a different strategy
and we adapt to our situation an idea which has been already used in \cite[Thm. A.2]{ForMetPri04Gra} in the case
of autonomous Dirichlet Cauchy problems. For any $n\in\N$, we introduce the function
$w_{n,\varsigma}=v_n-M_n\varsigma^{\varepsilon\eta_0}\psi_{\varsigma}$, where $M_n=\sup_{\Omega_{(s,T)}^R}v_n$,
\begin{align*}
\psi_{\varsigma}(t,x)=\frac{1}{(t+\varsigma-s)^{\varepsilon\eta_0}}\exp\left (t+\varsigma-s-\frac{\varepsilon
\omega^2(x)}{t+\varsigma-s}\right ),\qquad\;\, (t,x)\in \Om_{(s,T)},
\end{align*}
$\omega=\vartheta r_{\Omega}+1-\vartheta$, where $\vartheta\in C^2_b(\overline\Omega)$ satisfies
$\chi_{\Omega_{\delta/2}}\le\vartheta\le\chi_{\Omega_{\delta}}$ and $\delta$ is sufficiently small to have
$r_{\Rd\setminus\Omega}\in C^2_b(\overline{\Omega_{\delta}})$ (see Remark \ref{local charts}).
Finally, $\varsigma$ and $\varepsilon$ are positive parameters. Function $w_{n,\varsigma}$ has the same regularity as $u$. We claim that
\begin{enumerate}[\rm (i)]
\item
$D_tw_{n,\varsigma}-\A w_{n,\varsigma}+\lambda w_{n,\varsigma}\le 0$ in $\Omega^R_{(s,T)}$ and ${\mathcal B}w_{n,\varsigma}\le 0$ in
$(s,T]\times\partial\Omega$, for suitably fixed $\varepsilon>0$ and any $\varsigma\in (0,1)$;
\item
there exists $\tau=\tau(\varsigma)$ such that $w_{n,\varsigma}\le 0$ in $\Omega^R_{(s,s+\tau]}$.
\end{enumerate}
Since $w_{n,\varsigma}$ is continuous in $\overline{\Omega_{(s+\tau,T)}}$, we can then apply Step 1 to show that $w_{n,\varsigma}\le 0$
in $\overline{\Omega^R_{(s+\tau,T)}}$ and we conclude that $w_{n,\varsigma}\le 0$ in $\Omega^R_{(s,T)}$. Letting $\varsigma\to 0^+$ we deduce
that $v_n\le 0$ in $\Omega^R_{(s,T)}$.

To check property (i), we prove that there exists $\varepsilon>0$ such that, for any $\varsigma\in (0,1)$,
$D_t\psi_{\varsigma}-\A\psi_{\varsigma}+\lambda\psi_{\varsigma}$ and ${\mathcal B}\psi_{\varsigma}$ are positive in $\Omega^R_{(s,T)}$
and in $(s,T]\times\partial\Omega$, respectively.
Observe that $D_t\psi_{\varsigma}-\A\psi_{\varsigma}+\lambda\psi_{\varsigma}$ is positive in $\Omega^R_{(s,T)}$ if and only if the function
$h_{\varsigma}$, defined by
\begin{align*}
h_{\varsigma}(t,\cdot)=&(\lambda+1+c(t,\cdot))(t+\varsigma-s)^2-\varepsilon\eta_0 (t+\varsigma-s)
- 4\varepsilon^2 \omega^2 \langle Q(t,\cdot)\nabla\omega, \nabla\omega\rangle\\
&+ \varepsilon \omega^2+ 2\varepsilon (t+\varsigma-s) \langle Q(t,\cdot)\nabla\omega, \nabla\omega\rangle
+2\varepsilon (t+\varsigma-s)\omega(\A+c)\omega,
\end{align*}
for any $t\in (s,T)$, is positive in $\Omega^R_{(s,T)}$. To estimate the sign of the function $h_{\varsigma}$, we denote by $K_0$ and $K_1$
the supremum over $\Omega^R_{(s,T)}$ of the functions $\langle Q\nabla\omega,\nabla\omega \rangle$ and
$\langle b, \nabla\omega\rangle +\textrm{Tr}(Q D^2 \omega)$, respectively, and observe that
$\Omega= \{x\in\Omega: |\nabla\omega(x)|>3/4\}\cup\{x\in\Omega: \omega(x)\ge\sigma\}=:A \cup B$,
for a suitable $\sigma>0$. Indeed, for any $x\in\Omega$,
$|\nabla \omega(x)|\ge |\nabla \omega(\xi)|-|\nabla \omega(x)-\nabla \omega(\xi)|\ge 1 -\|\omega\|_{C^2_b(\Omega_\delta)}|x-\xi|$,
where $\xi$ is the unique projection of $x$ on $\partial \Om$. Hence, $|\nabla\omega|\ge \frac{3}{4}$ in
$\Omega_{\sigma}$ if $\sigma=(4\|\omega\|_{C^2_b(\Omega_\delta)})^{-1}$. Now, using the inequalities
\begin{align*}
&2\varepsilon (t+\varsigma-s)
\omega \left (\langle b(t,\cdot), \nabla\omega\rangle +\textrm{Tr}(Q(t,\cdot) D^2 \omega)\right)\ge
-\varepsilon^{\frac{3}{2}}\omega^2-(t+\varsigma-s)^2 K_1^2\sqrt{\varepsilon},\\[1mm]
&\langle Q\nabla\omega,\nabla\omega\rangle\ge \frac{9}{16}\eta_0\chi_A,\\[1mm]
&-\varepsilon (t+\varsigma-s)=-\varepsilon (t+\varsigma-s)\chi_A-\varepsilon (t+\varsigma-s)\chi_B\\
&\qquad\qquad\qquad\;\,\ge -\varepsilon (t+\varsigma-s)\chi_A-\frac{1}{2}\varepsilon^{\frac{3}{2}}\chi_B
-\frac{1}{2}(t+\varsigma-s)^2\sqrt{\varepsilon}\chi_B,
\end{align*}
and, recalling that $c\ge 0$, we can estimate
\begin{align*}
h_{\varsigma}\ge\! \left (\hskip -1truemm\lambda+1-\frac{1}{2}\sqrt{\varepsilon}\chi_B-K_1^2\sqrt{\varepsilon}\hskip -.2truemm\right )\hskip -.5truemm (\cdot+\varsigma-s)^2
+\varepsilon\omega^2[1-\sqrt{\varepsilon}(1+4K_0\sqrt{\varepsilon})]-\frac{1}{2}\varepsilon^{\frac{3}{2}}\chi_B.
\end{align*}
It is now clear that, if $\varepsilon\le\varepsilon_0:=\min\{(\lambda+1)^2K_1^{-4},(8K_0)^{-2}(-1+\sqrt{1+16K_0})^2\}$, then
$h_{\varsigma}(t,\cdot)$ is nonnegative
in $A$. On the other hand, $\omega\ge\sigma$ in $B$. Therefore, if $\varepsilon\le (8K_0)^{-2}(-1+\sqrt{1+8K_0})^2$,
it holds that
\begin{align*}
\varepsilon\omega^2[1-\sqrt{\varepsilon}(1+4K_0\sqrt{\varepsilon})]-\frac{1}{2}\varepsilon^{\frac{3}{2}}
\le\frac{1}{2}\varepsilon (\sigma^2-\sqrt{\varepsilon}).
\end{align*}
It is now clear that, if $\varepsilon\le\varepsilon_1:=\min\{(8K_0)^{-2}(-1+\sqrt{1+8K_0})^2, 4(\lambda+1)^2(1+2K_1^2)^{-2},\sigma^4\}$,
then $h_{\varsigma}(t,\cdot)\ge 0$ in $B$. Taking $\varepsilon=\varepsilon_1<\varepsilon_0$, it follows that $h_{\varsigma}\ge 0$
in $\Omega_{(s,T)}$ for any $\varsigma\in (0,1)$. To complete the proof of
the claim, we observe that
\begin{align*}
({\mathcal B}\psi_{\varsigma})(t,x)
=\left (\frac{2\omega(x)\varepsilon_1}{t+\varsigma-s}\langle \beta(t,x),\nu(x)\rangle+\gamma(t,x)\right )\psi_{\varsigma}(t,x)>0,
\end{align*}
for any $(t,x)\in [s,T]\times\partial\Omega$ and $\varsigma\in (0,1)$, since $\nabla\omega\equiv -\nu$ on $\partial\Omega$ and
$\langle\beta,\nu\rangle\ge 0$ in $[s,T]\times\partial\Omega$ by Hypothesis \ref{hyp1}(iii). The claim is now proved.

Finally, to check property (ii), we set $\Omega^{R,\eta}:=\{x\in\Omega^R: \omega(x)\le\eta\}$ and split
$\Omega=\Omega^{R,\eta}\cup (\Omega\setminus\Omega^{R,\eta})$,
where $\eta=\eta(\varsigma)>0$ satisfies $\varsigma -\frac{\varepsilon \eta^2}{\varsigma}>0$. It follows that
$e^{\varsigma-\frac{\varepsilon\omega^2}{\varsigma}}>1$ in $\Om^{R,\eta}$ and, by continuity we can find $\tau=\tau(\varsigma)>0$ such that
\begin{align*}
\varsigma^{\varepsilon\eta_0}\psi_{\varsigma}(t,x)
=\frac{\varsigma^{\varepsilon\eta_0}e^{t+\varsigma-s}}{(t+\varsigma-s)^{\varepsilon\eta_0}}e^{-\frac{\varepsilon\omega^2(x)}{t+\varsigma-s}}
>1,\qquad\;\,(t,x)\in [s,s+\tau]\times \Om^{R,\eta}.
\end{align*}
This implies that $w_{n,\varsigma}< v_n-M_n\le 0$ in $([s,s+\tau]\times \Om^{R,\eta})\setminus S$. Moreover, since $w_{n,\varsigma}(s,\cdot)\leq 0$ in
$\Om^R\setminus\Omega^{R,\eta}$ and $w_{n,\varsigma}$ is continuous in $[s,T]\times \overline{\Omega^R\setminus\Omega^{R,\eta}}$, up to
replacing $\tau$ with a smaller value if needed, we can assume that $w_{n,\varsigma}(t,x)\leq 0$ for any $(t,x)\in ([s,s+\tau]\times \Om^R)\setminus S$.
Property (ii) follows.
\end{proof}

In order to get existence of a unique solution to the problem \eqref{NAPC} we proceed by steps. In the following proposition we consider the case
when the datum $f$ vanishes at infinity and on the boundary of $\Omega$. We recall that $c_0$ is the infimum of the potential $c$
(see Hypothesis \ref{hyp1}(iii)).

\begin{prop}\label{smoothdatum}
For any $f\in C_0(\Omega)$, the Cauchy problem \eqref{NAPC} admits a unique bounded classical solution $u$.
It belongs to $C^{1+\alpha/2,2+\alpha}_{\rm{loc}}(\overline{\Omega}_{(s,+\infty)})\cap C_b(\overline{\Omega_{(s,+\infty)}})$ and satisfies the estimate
\begin{equation}
\label{estinf}
\| u(t,\cdot)\|_\infty\leq e^{-c_0(t-s)}\| f\|_\infty, \qquad\;\, t\geq s.
\end{equation}
If, further, $f\in C^{2+\alpha}_c(\Om)$, then $u \in C^{1+\alpha/2,2+\alpha}_{\rm{loc}}(\overline{\Om_{(s,+\infty)}})$.
\end{prop}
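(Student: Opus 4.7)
The plan is to deduce uniqueness and the estimate \eqref{estinf} directly from Proposition \ref{nsMP}. If $u_1,u_2$ are two bounded classical solutions, applying the proposition to $\pm(u_1-u_2)$ gives $u_1\equiv u_2$. For the pointwise bound, set $M(t):=e^{-c_0(t-s)}\|f\|_\infty$; a direct computation gives $D_tM-\mathcal{A}M=(c-c_0)M\ge 0$, $\mathcal{B}M=\gamma M\ge 0$ (since $\gamma\equiv 1$ in the Dirichlet case and $\gamma\ge 0$ in the Robin setting of Subsection \ref{subsect-3.1}), and $(\pm u-M)(s,\cdot)\le 0$; Proposition \ref{nsMP} then yields $|u(t,x)|\le M(t)$.

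For existence, I would approximate $\Omega$ from within by an increasing sequence of bounded $C^{2+\alpha}$ domains $\Omega_n\nearrow\Omega$, constructed by smoothing $\Omega\cap B_n(0)$ so that $\partial\Omega_n=\Sigma_n\cup\Gamma_n$ with $\Sigma_n\subset\partial\Omega$ and $\Gamma_n$ an artificial piece bounded away from $\partial\Omega$. On each $\Omega_n$ I would then solve the mixed parabolic problem $D_tu_n=\mathcal{A}u_n$ in $(s,+\infty)\times\Omega_n$, $\mathcal{B}u_n=0$ on $(s,+\infty)\times\Sigma_n$, $u_n=0$ on $(s,+\infty)\times\Gamma_n$, with $u_n(s,\cdot)=f_n$, where $f_n\in C^{2+\alpha}_c(\Omega_n)$ is a regularization of $f$ compatible with the boundary data, satisfying $\|f_n\|_\infty\le\|f\|_\infty$ and $f_n\to f$ locally uniformly. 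Classical parabolic Schauder theory on bounded smooth domains provides a unique classical solution $u_n\in C^{1+\alpha/2,2+\alpha}(\overline{(\Omega_n)_{[s,T]}})$ for every $T>s$, and the maximum principle on $\Omega_n$ yields $\|u_n(t,\cdot)\|_\infty\le e^{-c_0(t-s)}\|f\|_\infty$.

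Next, interior and up-to-the-boundary Schauder estimates, leveraging the uniform $C^{2+\alpha}$-regularity of $\partial\Omega$ encoded in Hypothesis \ref{hyp1}(i) and Remark \ref{local charts} together with the local H\"older continuity of the coefficients, provide uniform bounds on $\|u_n\|_{C^{1+\alpha/2,2+\alpha}(K)}$ over each compact $K=[s+\sigma,T]\times(\overline\Omega\cap\overline{B_R(0)})$ (for $n\ge n_0(R)$). A diagonal Arzel\`a--Ascoli extraction then produces a subsequence converging in $C^{1,2}_{\rm loc}(\overline{\Omega}_{(s,+\infty)})$ to a bounded function $u$ solving $D_tu-\mathcal{A}u=0$ in $\Omega_{(s,+\infty)}$ and $\mathcal{B}u=0$ on $(s,+\infty)\times\partial\Omega$, and inheriting the bound \eqref{estinf}. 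Continuity at each $(s,x_0)$ with $x_0\in\Omega$ is obtained by classical paraboloid-type local barriers around $x_0$, using the continuity of $f$; continuity at $(s,x_0)$ with $x_0\in\partial\Omega$ exploits $f|_{\partial\Omega}\equiv 0$ (since $f\in C_0(\Omega)$), in the Dirichlet case via comparison with a barrier of the form $\varepsilon+K(t-s)+L|x-x_0|$ that dominates $|u_n|$ near $(s,x_0)$, and in the Robin case after flattening the boundary via the charts $\psi_h$ of Remark \ref{local charts} and comparing with an explicit half-space Robin barrier. This yields $u\in C_b(\overline{\Omega_{(s,+\infty)}})$.

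Finally, the $C^{1+\alpha/2,2+\alpha}_{\rm loc}$-regularity up to $\{s\}\times\overline\Omega$ when $f\in C^{2+\alpha}_c(\Omega)$ is routine: in this case $f$ is compactly supported in $\Omega$, so all compatibility conditions between $f$ and the boundary operator hold trivially at $\{s\}\times\partial\Omega$, and the Schauder bounds on $\Omega_n$ are uniform up to $t=s$; the limit then inherits the regularity. The main obstacle in this program is making the boundary Schauder constants uniform in $n$ near $\partial\Omega$; this is precisely what the equi-smoothness of $\partial\Omega$ encoded in Hypothesis \ref{hyp1}(i) (the countable atlas $\{\psi_h\}$ with equibounded $C^{2+\alpha}$-norms) is designed to provide, and the choice of $\Gamma_n$ bounded away from $\partial\Omega$ prevents any mixed-boundary corner singularity from polluting the estimates on the ``natural'' part $\Sigma_n$.
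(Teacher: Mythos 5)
Your uniqueness argument and the derivation of \eqref{estinf} from Proposition \ref{nsMP} are fine and coincide with the paper's. The existence part, however, has a genuine gap. You approximate the \emph{domain} by bounded smooth sets $\Omega_n$ and solve mixed problems with ${\mathcal B}u_n=0$ on $\Sigma_n\subset\partial\Omega$ and $u_n=0$ on an artificial piece $\Gamma_n$ ``bounded away from $\partial\Omega$''. This last requirement is geometrically impossible whenever $\partial\Omega$ is unbounded (e.g.\ $\Omega=\R^d_+$): $\partial\Omega_n$ is a compact hypersurface, and if $\overline{\Gamma_n}\cap\partial\Omega=\varnothing$ then $\Sigma_n$ would be relatively clopen in $\partial\Omega_n$, i.e.\ a union of full compact components of $\partial\Omega_n$ contained in $\partial\Omega$, which cannot happen for a half-space or any domain with unbounded boundary. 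Hence the artificial cap must meet $\partial\Omega$, and at the junction the first-order (Robin/oblique) condition abuts the Dirichlet condition; classical parabolic Schauder theory does not give solvability in $C^{1+\alpha/2,2+\alpha}$ up to such a junction, nor even, without extra work, a classical solution continuous up to the corner to which the maximum principle on $\Omega_n$ can be applied. So the existence, regularity and uniform bound for the approximants $u_n$ are not justified as stated. The paper sidesteps exactly this difficulty by approximating the \emph{coefficients} instead of the domain: it truncates $q_{ij},b_j,c,\gamma$ and interpolates $\beta$ with $-\nabla r_\Omega$ via cut-offs, so that each approximating problem is posed on the whole unbounded $\Omega$ with bounded coefficients and a single (nondegenerate oblique or Dirichlet) boundary condition, solvable by \cite[Thms.~IV.5.2 \& IV.5.3]{LadSolUra68Lin}; the limit is then taken through local Schauder estimates as you propose.

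A second, smaller flaw concerns continuity at $t=s$ for general $f\in C_0(\Omega)$: the barrier $\varepsilon+K(t-s)+L|x-x_0|$ is not a supersolution near $x_0$, since ${\rm Tr}\bigl(Q\,D^2|x-x_0|\bigr)$ behaves like a positive multiple of $|x-x_0|^{-1}$ and makes $D_tw-\A w$ unbounded below, so this comparison cannot be run as written (one would need a barrier adapted to the interior/exterior sphere geometry). The paper avoids barriers altogether: for $f\in C^{2+\alpha}_c(\Omega)$ the approximating solutions are controlled in $C^{1+\alpha/2,2+\alpha}$ up to $t=s$, and for general $f\in C_0(\Omega)$ one approximates $f$ uniformly by functions of $C^{2+\alpha}_c(\Omega)$ and uses \eqref{estinf} to get a Cauchy sequence in the sup-norm, which yields continuity up to $\{s\}\times\overline\Omega$ directly; adopting this uniform-approximation step would also repair your argument once the construction of the approximants is fixed.
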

\begin{proof}
Uniqueness follows immediately by applying Proposition \ref{nsMP}. Estimate \eqref{estinf} can
be obtained applying the same proposition to the functions $\pm e^{c_0(t-s)}u(t,x)-\| f\|_\infty$
which satisfy \eqref{studente} with $t=+\infty$ and $\A$ being replaced by $\A+c_0$.

To prove the existence part, we first consider $f\in C^{2+\alpha}_c(\Omega)$ and use an approximation argument. For any $n\in\N$,
let $\vartheta_n$ be any smooth function such that $\chi_{B_n}\le\vartheta_n\le\chi_{B_{2n}}$. Moreover,let the functions $\mu_i\in C^{1+\alpha}_b(\Omega)$ ($i=1,\ldots,d$) satisfy $\mu_i=-D_ir_{\Omega}$ in a neighborhood of $\partial\Omega$.
We then approximate the coefficients $q_{ij}$, $b_j$, $c$, $\beta_i$ and $\gamma$ ($i,j=1,\ldots,d$)
by the (bounded) coefficients  $q_{ij}^{(n)}$, $b_j^{(n)}$, $c^{(n)}$, $\beta_i^{(n)}$ and $\gamma^{(n)}$, defined by
$q_{ij}^{(n)}=\vartheta_n q_{ij}+(1-\vartheta_n)\delta_{ij}$, $b_j^{(n)}=\vartheta_nb_j$, $c^{(n)}=\vartheta_nc$,
$\beta_i^{(n)}=\vartheta_n\beta_i+(1-\vartheta_n)\mu_i$, and $\gamma^{(n)}=\vartheta_n\gamma$, for any $i,j=1,\ldots,d$.
Clearly, $q_{ij}^{(n)}$, $b_j^{(n)}$, $c^{(n)}$ converge to $q_{ij}$, $b_j$, $c$, respectively,
locally uniformly in $\overline{\Omega_I}$.

Let ${\mathcal A}^{(n)}$ and ${\mathcal B}^{(n)}$ be, respectively, the differential operators defined as $\A$ and ${\mathcal B}$
with $(q_{ij},b_j,c,\beta,\gamma)$ being replaced by $(q_{ij}^{(n)},b_j^{(n)},c^{(n)},\beta^{(n)},\gamma^{(n)})$.

By \cite[Thms. IV.5.2 \& IV.5.3]{LadSolUra68Lin}, for any $n\in\N$, the Cauchy problem
\begin{equation}
\label{p_n}
\left\{
\begin{array}{lll}
D_t v(t,x)=(\A^{(n)}v)(t,x),\quad & (t,x)\in (s,+\infty)\times\Om,\\[1mm]
({\mathcal B}^{(n)}v)(t,x)= 0, \quad & (t,x)\in (s,+\infty)\times\partial\Om,\\[1mm]
v(s,x)= f(x), \quad & x\in \Om,
\end{array}
\right.
\end{equation}
admits a unique solution $u_n\in C_{\rm loc}^{1+\alpha/2,2+\alpha}(\overline{\Omega_{(s,+\infty)}})$. Moreover, for any
$m,n\in\N$, with $n>m$, the local Schauder estimates (see \cite[Thm. IV.10.1]{LadSolUra68Lin}) show that there exists a positive constant $c_m$,
independent of $n$, such that
$\| u_n\|_{C^{1+\alpha/2,2+\alpha}(\Om^m_{(s, s+m)})}\leq c_m \| f\|_{C^{2+\alpha}_b(\Om)}$. Applying
Arzel\`a-Ascoli theorem, we can determine a subsequence $(u_n^{(m)})$ converging in
$C^{1,2}(\overline{\Om^m_{(s,s+m)}})$ to a function $u^{(m)}\in C^{1+\alpha/2, 2+\alpha}(\Om^m_{(s,s+m)})$
which satisfies the equation $D_tu^{(m)}=\A u^{(m)}$ in $\Om^m_{(s,s+m)}$. Moreover, $u^{(m)}(s,\cdot)\equiv f$ in $\Om^m$ and
${\mathcal B}^{(n)}u^{(m)}\equiv {\mathcal B}u^{(m)}\equiv 0$ in $(s,s+m)\times (\partial\Omega\cap B_m)$. Since, without loss of generality,
we can assume that $(u_n^{(m)})\subset (u_n^{(m-1)})$ for any $m\in\N$, we can define the function $u:\overline{\Omega_{(s,+\infty)}}\to\R$
by setting $u(t,x)=u^{(m)}(t,x)$ for every $(t,x)\in \Om^m_{(s,s+m)}$ and every $m\in\N$; it belongs to
$C^{1+\alpha/2, 2+\alpha}_{\rm loc}(\overline{\Omega_{(s,+\infty)}})$ and satisfies \eqref{NAPC}.

Finally, we consider the general case when $f\in C_0(\Omega)$. We fix a sequence  $(f_n)\subset C^{2+\alpha}_c(\Omega)$
converging to $f$ uniformly in $\overline\Omega$ and denote by $u_n$ the unique bounded classical solution to the Cauchy problem \eqref{NAPC},
with $f$ being replaced by $f_n$. Applying estimate \eqref{estinf} to the function $u_n-u_m$, we obtain that $(u_n)$ is a Cauchy sequence
in $\Omega_{(s,T)}$ for any $T>s$. Hence, by the arbitrariness of $T>s$, $u_n$ converges uniformly in $\overline{\Omega_{(s,+\infty)}}$ to a function
$u\in C_b(\overline{\Omega_{(s,+\infty)}})$ which satisfies $u(s,\cdot)=f$.

To prove that $u$ is smooth, solves the differential equation and satisfies the boundary condition in \eqref{operator},
we apply a compactness argument, as in the first part of the proof, starting from the interior Schauder estimates
(see e.g., \cite[Thm. IV.10.1]{LadSolUra68Lin}) which show that the $C^{1+\alpha/2,2+\alpha}$-norm of the sequence $(u_n)$,
in any compact set of $(s,T]\times\overline{\Omega}$, is bounded by a constant independent of $n$.
Hence, there exists a subsequence $(u_{n_k})$ which converges locally uniformly, together with its derivatives,
to a function $v\in C^{1+\alpha/2,2+\alpha}_{\rm loc}(\overline{\Omega}_{(s,+\infty)})$ which satisfies the differential equation
$D_tv={\mathcal A}v$ in $\Om_{(s,+\infty)}$ and the boundary condition ${\mathcal B}v=0$ in $(s,+\infty)\times\partial \Om$.
Since, clearly, $v\equiv u$, $u$ is the bounded classical solution t the problem \eqref{NAPC}.
\end{proof}

We can now address the general case when $f\in C_b(\Omega)$.

\begin{thm}\label{existence}
For any $f \in C_b(\Om)$, problem \eqref{NAPC} has a unique bounded classical solution $u$.
Moreover, $u\in C^{1+\alpha/2,2+\alpha}_{\rm loc}(\overline{\Omega}_{(s,+\infty)})$, estimate \eqref{estinf} holds and, if $f\ge 0$
does not identically vanish then, $u$ is strictly positive in $\Omega_{(s,+\infty)}$.
\end{thm}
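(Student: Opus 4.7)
The plan is to derive the theorem from Proposition~\ref{smoothdatum} via a monotone approximation in the initial datum, using Proposition~\ref{nsMP} for comparison, and to handle the attainment of the initial datum inside $\Om$ by a local barrier argument. Uniqueness and estimate \eqref{estinf} are immediate consequences of Proposition~\ref{nsMP}: if $u_1$ and $u_2$ are two bounded classical solutions, then $\pm(u_1-u_2)$ satisfies \eqref{studente} with vanishing data, so $u_1\equiv u_2$; analogously, applying the same proposition to $\pm e^{c_0(t-s)}u-\|f\|_\infty$ (cf.\ Remark~\ref{rem-2.5}) yields \eqref{estinf}. It therefore suffices to construct \emph{some} bounded classical solution.

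By linearity we can split $f=f^+-f^-$ and reduce to the case $f\ge 0$. I would fix a nondecreasing sequence $(\vartheta_n)\subset C_c^\infty(\Om)$ with $\vartheta_n\nearrow 1$ pointwise on $\Om$, set $f_n:=\vartheta_n f\in C_c(\Om)\subset C_0(\Om)$, and let $u_n$ be the solution provided by Proposition~\ref{smoothdatum} with datum $f_n$. Since $0\le f_n\le f_{n+1}$, Proposition~\ref{nsMP} applied to $-u_n$ and to $u_n-u_{n+1}$ yields $0\le u_n\le u_{n+1}$, while \eqref{estinf} furnishes the uniform bound $\|u_n(t,\cdot)\|_\infty\le e^{-c_0(t-s)}\|f\|_\infty$. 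Hence $u_n\nearrow u$ pointwise for some bounded $u$. Local (interior and up-to-the-boundary) Schauder estimates \cite[Thm.~IV.10.1]{LadSolUra68Lin} provide uniform $C^{1+\alpha/2,2+\alpha}$ bounds for $(u_n)$ on every compact subset of $\overline{\Om_{(s,+\infty)}}$ bounded away from $\{s\}$; a diagonal Arzel\`a--Ascoli extraction then identifies $u\in C^{1+\alpha/2,2+\alpha}_{\rm loc}(\overline{\Om_{(s,+\infty)}})$ as a solution of $D_tu=\A u$ in $\Om_{(s,+\infty)}$ satisfying $\mathcal{B}u=0$ on $(s,+\infty)\times\partial\Om$.

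The main obstacle is the attainment of the initial datum at interior points, because $f$ need not vanish near $\partial\Om$ and so $f_n\to f$ only locally uniformly on $\Om$. To show $u(t,x)\to f(x_0)$ as $(t,x)\to(s,x_0)$ for every $x_0\in\Om$, I would fix $\varepsilon>0$ and choose $r>0$ with $\overline{B_r(x_0)}\subset\Om$ and $|f-f(x_0)|<\varepsilon$ on $\overline{B_r(x_0)}$; Dini's theorem then gives $f_n\to f$ uniformly there, so $|f_n-f(x_0)|<2\varepsilon$ on $\overline{B_r(x_0)}$ for $n$ large. On the bounded cylinder $Q_\tau:=(s,s+\tau)\times B_r(x_0)$, where the coefficients of $\A$ are bounded, I would employ the standard parabolic barriers
\begin{equation*}
w^{\pm}(t,x):=f(x_0)\pm 2\varepsilon\pm\frac{2\|f\|_\infty}{r^2}|x-x_0|^2\pm K(t-s),
\end{equation*}
with $K$ large enough that $D_tw^+-\A w^+\ge 0$ and $D_tw^--\A w^-\le 0$ in $Q_\tau$. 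Checking $w^-\le u_n\le w^+$ on $\{s\}\times\overline{B_r(x_0)}$ (via the uniform estimate on $f_n$) and on $[s,s+\tau]\times\partial B_r(x_0)$ (via $0\le u_n\le\|f\|_\infty$), the classical bounded-domain maximum principle extends the inequality throughout $Q_\tau$; passing to the limit $n\to\infty$ and then $(t,x)\to(s,x_0)$ yields $|u(t,x)-f(x_0)|\le 2\varepsilon$, so $u$ is continuous at $(s,x_0)$ with value $f(x_0)$. Finally, the strict-positivity statement follows from the classical parabolic strong maximum principle applied to the nonnegative solution $u$ of $D_tu-\A u=0$, using that $u(s,\cdot)=f\not\equiv 0$.
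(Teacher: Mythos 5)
Your proposal is correct, and it reproduces the paper's skeleton (approximation by compactly supported data, uniform sup-bounds from Proposition \ref{nsMP}, Schauder estimates up to $\partial\Omega$ away from $t=s$ plus Arzel\`a--Ascoli to get a limit solving the equation and the boundary condition), but it handles the crucial step --- attainment of the initial datum at interior points --- by a genuinely different mechanism. The paper keeps a general sign-changing $f$, takes any $(f_n)\subset C^{2+\alpha}_c(\Omega)$ converging locally uniformly, and controls the initial behaviour through the splitting $u_{f_n}=u_{\psi f_n}+u_{(1-\psi)f_n}$ together with the comparison $|u_{(1-\psi)f_n}|\le\|f\|_\infty(1-u_\psi)$, which rests on Proposition \ref{nsMP} and on the continuity of $u_\psi$ at $t=s$ given by Proposition \ref{smoothdatum}; you instead reduce to $f\ge 0$ via $f=f^+-f^-$, use a monotone approximation $\vartheta_n f\nearrow f$ (Dini on small balls) and explicit quadratic-in-space, linear-in-time barriers on a compact cylinder, where the coefficients are bounded, concluding with the classical bounded-domain maximum principle. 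Both routes are sound: yours is more elementary and entirely local (no auxiliary solution $u_\psi$ is needed), at the price of the sign reduction and the monotonicity of the scheme; the paper's inequality $|u_f-f|\le|u_{\psi f}-\psi f|+\|f\|_\infty(1-u_\psi)$ has the extra virtue of being reused later (Proposition \ref{continuity}(ii), Remarks \ref{chiusura} and \ref{rmk-Gn-to-GB}), so it is not merely a device for this theorem. Two minor points: your barrier computation indeed closes because $c\ge 0$ and the coefficients are bounded on the compact cylinder, but it is worth saying so explicitly when choosing $K$; and for strict positivity the strong maximum principle must be applied, as in the paper, in a cylinder $\Omega^*_{(s,t_0)}$ with $\Omega^*$ compactly contained in $\Omega$ and chosen to contain both $x_0$ and a point where $f>0$, since the coefficients are unbounded on $\Omega$ --- your one-line appeal should include this routine localization.
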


\begin{proof}
The uniqueness part and estimate \eqref{estinf} are consequences of Proposition \ref{nsMP}.

Let us prove the existence part. We fix $f \in C_b(\Om)$ and a sequence $(f_n)\subset C^{2+\alpha}_c(\Om)$ converging to $f$ uniformly
on compact subsets of $\Om$ and such that $\| f_n\|_\infty\leq\| f\|_\infty$. We denote by $u_n$ the bounded classical solution
to problem \eqref{NAPC} with $f$ being replaced by $f_n$.
Using the interior Schauder estimates as in the last part of the proof of Proposition \ref{smoothdatum}, we can prove that, up to a subsequence,
$u_n$ converges locally uniformly, together with its derivatives, to a function
$u\in C^{1+\alpha/2,2+\alpha}_{\rm loc}(\overline{\Omega}_{(s,+\infty)})$ which satisfies the differential equation $D_tu=\A u$ in
$\Omega_{(s,+\infty)}$ and the boundary condition in $(s,+\infty)\times\partial\Omega$.
So, to prove that $u$ solves problem \eqref{NAPC} we have to show that $u$ can be extended by continuity up to $t=s$ where it equals $f$.
We fix a compact set $K\subset \Om$ and a smooth and compactly supported function $\psi$
such that $0\leq \psi \leq 1$ and $\psi\equiv 1$ in $K$. Since $\psi f_n$ and $(1-\psi)f_n$ are compactly supported in $\Omega$ for every $n\in\N$,
by linearity and Proposition \ref{nsMP} we conclude that
$u_{f_n}=u_{\psi f_n}+ u_{(1-\psi)f_n}$, where $u_g$ denotes the unique bounded classical solution to problem \eqref{NAPC} with $f=g$.
Applying Proposition \ref{nsMP} to the functions $\pm u_{(1-\psi)f_n}-\| f\|_\infty( 1-u_{\psi})$, we get the estimate
$\|u_{(1-\psi)f_n}\|_\infty\leq\| f\|_\infty( 1-u_{\psi})$.
Hence, $|u_{f_n}- f| \leq |u_{\psi f_n}-\psi f|+\| f\|_{\infty}( 1-u_{\psi})$ in $\Omega_{(s,+\infty)}$.
Letting $n\to +\infty$ in the previous inequality, from the last part of the proof of Proposition \ref{smoothdatum} we obtain
$|u- f| \leq |u_{\psi f}-\psi f| +  \|f\|_{\infty}(1-u_{\psi})$ in $\Omega_{(s,+\infty)}$.
Since $\psi\equiv 1$ in $K$, it now follows that $u$ can be extended by continuity at $t=s$ by setting $u(s,\cdot)=f$ in $K$.
By the arbitrariness of $K$ we deduce that $u=u_f$.

Finally let us prove that $u$ is positive in $\Om_{(s,+\infty)}$ if $f\ge 0$ is not identically zero. Proposition \ref{nsMP}
shows that $u\ge 0$ in $\Om_{(s,+\infty)}$. By contradiction, let us assume that there exists $(t_0, x_0) \in \Om_{(s,+\infty)}$
such that $u(t_0, x_0)=0$. Let us consider an open set $\Om^*\ni x_0$, compactly contained in $\Om$, where $f$ does not identically vanish.
By applying \cite[Thm. 3.7]{protter}  to $-u$ in the cylinder $\Om^*_{(s,t_0)}$ we deduce that $u(t,x)=0$ for $(t,x)\in \times \Om^*_{[s,t_0]}$
getting to a contradiction.
\end{proof}

\begin{rmk}
\label{chiusura}
{\rm If the coefficients of the first-order operator ${\mathcal B}$ are independent of $t$ and
belong to $C^{1+\alpha}_b(\partial\Omega)$, then, for any $f\in C_b(\overline\Omega)$, the classical solution $u_f$ to problem \eqref{NAPC} is continuous in the whole of $\overline{\Omega_{(s,+\infty)}}$. Indeed, under these
conditions, we can repeat the same arguments as in the proof of Proposition $\ref{smoothdatum}$ (without approximating the boundary
operator ${\mathcal B}$) to show that, for any $g\in C^{2+\alpha}_b(\Omega)$ such that $\mathcal Bg\equiv 0$ on $\partial \Om$, the solution $u_g$ to problem \eqref{NAPC} belongs to $C^{(1+\alpha)/2,2+\alpha}_{\rm loc}(\overline{\Omega_{(s,T)}})$.
Since any function $f\in C_b(\overline{\Omega})$, which vanishes at $\infty$, is the uniform limit of a sequence of functions $(g_n)$
in $C^{2+\alpha}_b(\Omega)$, which vanish at infinity and satisfy ${\mathcal B}g_n\equiv 0$ on $\partial\Omega$ for any $n\in\N$,
arguing as in the proof of Proposition \ref{smoothdatum}, we conclude that $u_f$ is continuous in $\overline{\Omega_{(s,+\infty)}}$
for any $g$ as above.
As a by product, in the proof of Theorem \ref{existence} we can take as $K$ a compact subset of $\overline{\Omega}$. Since
$u_{f_n\psi}$ converges to $u_{f\psi}$ uniformly in $\Omega_{(s,+\infty)}$, we can estimate
$|u_f-f|\le |u_{\psi f}-\psi f|+\|f\|_{\infty}(1-u_{\psi})$. This inequality shows that $u_f$ is continuous on $\{s\}\times K$.
The arbitrariness of $K$ yields the continuity of $u_f$ on $\{s\}\times\overline{\Omega}$ and, consequently, in
$\overline{\Omega_{(s,+\infty)}}$.}
\end{rmk}

\subsection{The general case}

We now consider the general case when $\gamma$ can assume also negative values. We stress that the arguments used in the previous subsection
to prove uniqueness and estimate \eqref{estinf} fail.
To overcome these difficulties we assume an additional assumption.

\begin{hyp0}
\label{ipofisi}
There exist a function $\phi\in C^{2+\alpha}_{\rm loc}(\overline\Omega)\cap C_b(\overline\Omega)$, with positive infimum, and a constant $H$ such that
${\mathcal A}\phi\le H\phi$ in $\Omega_I$  and ${\mathcal B}\phi\ge 0$ in $I\times\partial\Omega$.
\end{hyp0}

\begin{thm}
\label{existence-bis}
Let Hypotheses $\ref{ipofisi}$ be satisfied and fix $s\in I$. Then, for any $f\in C_b(\Omega)$, the problem \eqref{NAPC} admits a unique
bounded classical solution $u$. The function $u$ belongs to $C^{1+\alpha/2,2+\alpha}_{\rm loc}(\overline{\Omega}_{(s,+\infty)})$ and
\begin{equation}
\|u(t,\cdot)\|_{\infty}\le Me^{H(t-s)}\|f\|_{\infty},\qquad\;\,t>s,
\label{giorgino}
\end{equation}
where $M=(\inf_{\Omega}\phi)^{-1}\|\phi\|_{\infty}$.
Finally, for any $f\in C^{2+\alpha}_c(\Omega)$, the unique bounded classical solution to problem
\eqref{NAPC} belongs to $C^{1+\alpha/2,2+\alpha}_{\rm loc}(\overline{\Omega_{(s,+\infty)}})$ and, if $f\ge 0$ does not identically vanish,
then $u>0$ in $\Om_{(s,+\infty)}$.
\end{thm}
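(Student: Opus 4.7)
The plan is to reduce the general case to the setting of Subsection \ref{subsect-3.1} (where $\gamma\geq 0$ and the potential is bounded below by zero) through a conjugation by the auxiliary function $\phi$ supplied by Hypothesis \ref{ipofisi}. Concretely, I will introduce the change of unknown
\begin{equation*}
w(t,x)=\phi(x)^{-1}e^{-H(t-s)}u(t,x),\qquad (t,x)\in\overline{\Omega_{(s,+\infty)}}\setminus S,
\end{equation*}
and show that $u$ is a bounded classical solution to \eqref{NAPC} if and only if $w$ is a bounded classical solution to an analogous problem $(\widetilde{P}_{\mathcal B})$ with initial datum $\widetilde{f}=f/\phi\in C_b(\Omega)$, with $\mathcal A(t)$ replaced by the operator $\widetilde{\mathcal A}_0(t)$ whose coefficients are
\begin{equation*}
\widetilde q_{ij}=q_{ij},\quad \widetilde b_i=b_i+2\phi^{-1}\sum_j q_{ij}D_j\phi,\quad \widetilde c=c-\phi^{-1}\mathcal A\phi+H,
\end{equation*}
and with $\mathcal B(t)$ replaced by $\widetilde{\mathcal B}(t)$, where $\widetilde\beta=\beta$ and $\widetilde\gamma=\gamma+\phi^{-1}\langle\beta,\nabla\phi\rangle$. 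The verification of this equivalence is a direct computation based on the classical identity $\mathcal A(\phi v)=\phi\,\mathcal Av+2\langle Q\nabla\phi,\nabla v\rangle+v\,\mathcal A\phi$ and on the analogous product rule ${\mathcal B}(\phi v)=\phi\,\widetilde{\mathcal B}v$ on $\partial\Omega$.

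Next, I will check that the transformed triple $(\widetilde{\mathcal A}_0,\widetilde{\mathcal B},\widetilde f)$ meets all the hypotheses of Subsection \ref{subsect-3.1}. The regularity assumptions in Hypotheses \ref{hyp1} and \ref{hyp1-bc} transfer immediately, since $\phi\in C^{2+\alpha}_{\rm loc}(\overline\Omega)$ and $\inf_\Omega\phi>0$. The new potential satisfies $\widetilde c\geq 0$ thanks to $\mathcal A\phi\leq H\phi$, so Hypothesis \ref{hyp1}(iii) holds; the new coefficient $\widetilde\gamma$ is nonnegative thanks to ${\mathcal B}\phi\geq 0$, which puts us in the framework of Subsection \ref{subsect-3.1}. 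For Hypothesis \ref{hyp-3}, the natural candidate Lyapunov function is $\widetilde\varphi_J:=\varphi_J/\phi$, which still blows up as $|x|\to+\infty$ uniformly in $t\in J$ because $\phi$ is bounded. A short computation using the product rules above gives
\begin{equation*}
D_t\widetilde\varphi_J-\widetilde{\mathcal A}_0\widetilde\varphi_J+(\lambda_J-H)\widetilde\varphi_J=\phi^{-1}\bigl(D_t\varphi_J-\mathcal A\varphi_J+\lambda_J\varphi_J\bigr)>0,
\end{equation*}
and analogously $\widetilde{\mathcal B}\widetilde\varphi_J=\phi^{-1}{\mathcal B}\varphi_J\geq 0$ on $\partial\Omega$, so Hypothesis \ref{hyp-3} is preserved (with a possibly shifted constant).

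Having checked the hypotheses for the transformed problem, I will invoke Theorem \ref{existence} to obtain a unique bounded classical solution $w\in C^{1+\alpha/2,2+\alpha}_{\rm loc}(\overline\Omega_{(s,+\infty)})$ of $(\widetilde{P}_{\mathcal B})$, which is strictly positive when $\widetilde f\geq 0$ does not identically vanish, and satisfies $\|w(t,\cdot)\|_\infty\leq \|\widetilde f\|_\infty\leq(\inf_\Omega\phi)^{-1}\|f\|_\infty$ by \eqref{estinf} (applied with $\widetilde c_0\geq 0$). Setting $u(t,x):=\phi(x)e^{H(t-s)}w(t,x)$ then yields a bounded classical solution of \eqref{NAPC} with the required regularity, and multiplying the sup-estimate for $w$ by $\|\phi\|_\infty e^{H(t-s)}$ gives \eqref{giorgino}. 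Uniqueness for \eqref{NAPC} follows from uniqueness for the transformed problem since the change of variables $u\leftrightarrow w$ is a bijection between bounded classical solutions. The additional regularity statement for $f\in C^{2+\alpha}_c(\Omega)$ and the strict positivity statement follow in the same way from the corresponding assertions in Proposition \ref{smoothdatum} and Theorem \ref{existence} applied to $\widetilde f=f/\phi$.

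The main obstacle is bookkeeping: one must verify carefully that every hypothesis (regularity of coefficients, sign of the potential, Lyapunov function, boundary sign condition) is preserved under the conjugation $u=\phi e^{H(t-s)}w$, and that the Dirichlet case $\beta\equiv 0$, $\gamma\equiv 1$ is not accidentally excluded (it reduces to $\widetilde\gamma\equiv 1$ and is harmless). Once these verifications are done, nothing essentially new must be proved, because all the hard work has already been carried out in the previous subsection.
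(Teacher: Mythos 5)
Your proposal follows the paper's own proof essentially verbatim: the same change of unknown $v=e^{-H(t-s)}\phi^{-1}u$, the same transformed pair $(\tilde{\mathcal A},\tilde{\mathcal B})$ as in \eqref{pb-per-v}, the same verification that Hypotheses \ref{hyp1}, \ref{hyp1-bc} and \ref{hyp-3} are inherited (your Lyapunov function $\varphi_J/\phi$ with the shifted constant $\lambda_J-H$ differs only cosmetically from the paper's $e^{-H(t-s)}\varphi_J/\phi$ with the same $\lambda_J$; if $\lambda_J\le H$ simply enlarge $\lambda_J$, which is harmless since $\varphi_J>0$), and the same transfer of existence, uniqueness, regularity, positivity and the bound \eqref{giorgino} from Subsection \ref{subsect-3.1}. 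The one flaw is a bookkeeping slip: with the convention \eqref{operator}, in which $\mathcal A$ already contains the zero-order term $-c\zeta$, the product rule reads $\mathcal A(\phi v)=\phi(\mathcal A+c)v+2\langle Q\nabla\phi,\nabla v\rangle+v\,\mathcal A\phi$, not $\phi\,\mathcal Av+2\langle Q\nabla\phi,\nabla v\rangle+v\,\mathcal A\phi$ as you state (your identity counts $-c\phi v$ twice). Consequently the transformed potential is $\tilde c=H-\phi^{-1}\mathcal A\phi$, equivalently $c+H-\phi^{-1}(\mathcal A+c)\phi$, rather than $c-\phi^{-1}\mathcal A\phi+H$; with your formula the transformed equation would not be equivalent to \eqref{NAPC}. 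The correction costs nothing: $\tilde c\ge 0$ follows directly from $\mathcal A\phi\le H\phi$ (without even invoking $c\ge 0$), and every subsequent step of your argument, including the sup-norm estimate and the backward transfer of uniqueness and positivity, goes through unchanged.
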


\begin{proof}
Let us fix a function $f\in C_b(\Omega)$. We point out that $u$ is a bounded classical solution to problem \eqref{NAPC} if and only if the function
$v:\overline{\Omega_{(s,+\infty)}}\to\R$, defined by
$v(t,x):=e^{-H(t-s)}(\phi(x))^{-1}u(t,x)$ for any $(t,x)\in \overline{\Omega_{(s,+\infty)}}$, is a bounded classical solution to the Cauchy problem
\begin{equation}
\left\{
\begin{array}{ll}
D_tv(t,x)=(\tilde{\mathcal A} v)(t,x), & (t,x)\in (s,+\infty)\times\Omega,\\[1mm]
(\tilde {\mathcal B}v)(t,x)= 0, & (t,x)\in (s,+\infty)\times\partial\Omega,\\[1mm]
v(s,x)=(\phi(x))^{-1}f(x), &x\in\Omega,
\end{array}
\right.
\label{pb-per-v}
\end{equation}
where
\begin{align*}
\tilde{\mathcal A}v=({\mathcal A}+c)v+\frac{2}{\phi}\langle Q\nabla\phi,\nabla_xv\rangle
-\left (H-\frac{{\mathcal A}\phi}{\phi}\right )v,\qquad\;\, \tilde {\mathcal B}v=\langle \beta,\nabla_xv\rangle+\frac{{\mathcal B}\phi}{\phi}v.
\end{align*}

Clearly, the coefficients of operators $\tilde {\mathcal A}$ and $\tilde {\mathcal B}$ satisfy Hypotheses \ref{hyp1} (note that
the potential of the operator $\tilde\A$ is nonnegative in $\Omega_I$).
Moreover, Hypotheses \ref{hyp1-bc} are satisfied as well and, by Hypotheses \ref{ipofisi}, it follows that
 $({\mathcal B}\phi)/\phi\ge 0$ in $(s,+\infty)\times\partial\Omega$.
Finally, we note that, for any bounded interval $J\subset I$, the function $\overline\varphi_J$, defined by
$\overline\varphi_J(t,x)=e^{-H(t-s)}(\phi(x))^{-1}\varphi_J(t,x)$ for any
$(t,x)\in \overline{\Om_J}$, satisfies Hypothesis \ref{hyp-3} with the same $\lambda$ and the operators
${\mathcal A}$ and ${\mathcal B}$ being replaced by
$\tilde{\mathcal A}$ and $\tilde{\mathcal B}$.
We can thus apply the results in Subsection \ref{subsect-3.1} and deduce that the problem \eqref{pb-per-v} admits a unique bounded classical
solution $v$, which in addition belongs to $C^{1+\alpha/2,2+\alpha}_{\rm loc}(\overline{\Omega}_{(s,+\infty)})$ and satisfies the estimate
$\|v(t,\cdot)\|_{\infty}\le e^{H(t-s)}\|f/\phi\|_{\infty}$ for any $t \ge s$.
As a byproduct we deduce that problem \eqref{NAPC} admits a unique bounded classical solution which, in addition, belongs to
$C^{1+\alpha/2,2+\alpha}_{\rm loc}(\overline{\Omega}_{(s,+\infty)})$ and satisfies the inequality \eqref{giorgino}.

The last assertions follow from Proposition \ref{smoothdatum} and Theorem \ref{existence}, observing that the operator $f\mapsto f/\phi$
preserves positivity and it is an isomorphism from $C^{2+\alpha}_c(\Omega)$ into $C^{2+\alpha}_c(\Omega)$ and from
$C^{1+\alpha/2,2+\alpha}_{\rm loc}(\overline{\Omega_{(s,+\infty)}})$
into itself.
\end{proof}

\begin{rmk}
{\rm From the proof of Theorem \ref{existence-bis} and Remark \ref{chiusura} it follows that,
if the coefficients of the operator ${\mathcal B}$ are independent of $t$ and belong to
$C^{1+\alpha}_b(\overline\Omega)$ as well as the function $({\mathcal B}\phi)/\phi$, then the bounded classical solution $u_f$
to problem \eqref{NAPC} is continuous in $\overline{\Omega_{(s,+\infty)}}$ for any
$f\in C_b(\overline\Omega)$.
}
\end{rmk}

\section{The evolution operator: continuity properties and compactness}
\label{sect-evol}
Set $\Lambda=\{(t,s)\in I\times I:\,\,t>s\}$. In view of Theorems \ref{existence} and \ref{existence-bis}, the
family of bounded linear operators $\{G_{\mathcal B}(t,s): (t,s)\in \overline\Lambda\}$, defined in $C_b(\Omega)$ by $G_{\mathcal B}(t,t)={id}_{C_b(\Om)}$ and
$G_{\mathcal B}(t,s)f:= u_f(t,\cdot)$ for $t>s$, where $u_f$ is the unique solution to the problem \eqref{NAPC} with $f \in C_b(\Omega)$,
gives rise to an evolution operator.
Each operator $G_{\mathcal B}(t,s)$ is positive. Moreover, estimates \eqref{estinf} and \eqref{giorgino} imply that
\begin{align}
&\|G_{\mathcal B}(t,s)f\|_{\infty}
\le
\left\{
\begin{array}{ll}
e^{-c_0(t-s)}\|f\|_{\infty}, & \gamma\ge 0,\\[2mm]
Me^{H(t-s)}\|f\|_{\infty}, & {\rm otherwise},
\end{array}
\right.
\qquad\;\,t>s,\;\,f\in C_b(\Omega).
\label{stima-oper-B}
\end{align}
In particular, if $\gamma\ge 0$, $G_{\mathcal B}(t,s)$ is a contraction.
On the other hand, for a general $\gamma$, the proof of Theorem  \ref{existence-bis} shows that
\begin{equation}\label{link}
G_{\mathcal B}(t,s)f=\phi e^{H(t-s)}\tilde G_{\tilde{\mathcal B}}(t,s)\left (\textstyle{\frac{f}{\phi}}\right ),\qquad\;\, t>s\in I,
\;\,x\in\Omega,\;\,f\in C_b(\Omega),
\end{equation}
where $\tilde G_{\tilde{\mathcal B}}(t,s)$ is the evolution operator associated to problem \eqref{pb-per-v}.
In what follows,
to simplify the notation we denote the evolution operator $\{G_{\mathcal B}(t,s): (t,s)\in \overline\Lambda\}$ simply by
$G_{\mathcal B}(t,s)$.

We now prove some continuity property of the evolution operator $G_{\mathcal B}(t,s)$,
which will be used in Section \ref{sect_graest}.

\begin{prop}\label{continuity}
Let $(f_n)\subset C_b(\Om)$ be a bounded sequence with respect to the sup-norm and let
$f\in C_b(\Omega)$.
\begin{enumerate}[\rm (i)]
\item
If $f_n$ converges pointwise to $f$ in $\Omega$, then, for any pair of compact sets $J\subset (s,+\infty)$ and $K\subset\Om$,
$G_{\mathcal B}(\cdot,s)f_n$ converges
to $G_{\mathcal B}(\cdot,s)f$ in $C^{1,2}(J\times K)$ as $n\to +\infty$.
\item
If $f_n$ converges locally uniformly to $f$ in $\Omega$,
then $G_{\mathcal B}(\cdot,s)f_n$ converges uniformly to
$G_{\mathcal B}(\cdot,s)f$ in $[s,T]\times K$ for any $T>s$ and any compact set $K\subset \Om$.
\end{enumerate}
\end{prop}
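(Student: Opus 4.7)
The plan is to prove (ii) first via a Lyapunov-function cutoff argument, and then to obtain (i) from it combined with interior parabolic Schauder estimates and a subsequence-uniqueness argument. The common inputs are the uniform $L^\infty$ bound \eqref{stima-oper-B} on $(u_n):=(G_{\B}(\cdot,s)f_n)$, together with the interior and boundary Schauder estimates from \cite[Thm.~IV.10.1]{LadSolUra68Lin}: since $\partial\Om$ is $C^{2+\alpha}$ and the boundary data $\B u_n$ identically vanish, the latter yield a uniform $C^{1+\alpha/2,2+\alpha}$ bound for $(u_n)$ on every compact subset of $\overline\Om_{(s,+\infty)}$.

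For (ii), by linearity reduce to $f\equiv 0$, so that $f_n\to 0$ locally uniformly on $\Om$ with $M:=\sup_n\|f_n\|_\infty<+\infty$. Fix $\epsilon>0$, a compact $K\subset\Om$, and $T>s$, and let $\varphi=\varphi_{[s,T]}$ be the Lyapunov function from Hypothesis \ref{hyp-3}. Since $\varphi\to+\infty$ at infinity uniformly in $t\in[s,T]$, pick $R>0$ with $\varphi(t,x)\ge M/\epsilon$ on $[s,T]\times(\overline\Om\setminus B_R)$. Take a smooth cutoff $\psi\in C^\infty(\Rd)$ with $0\le\psi\le 1$, $\psi\equiv 1$ on $B_R$, $\supp\psi\subset B_{R+1}$, and decompose $f_n=\psi f_n+(1-\psi)f_n$. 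Then $|(1-\psi)f_n|\le\epsilon\varphi(s,\cdot)$ on $\Om$, and the function $w_n^\pm:=\pm G_{\B}(\cdot,s)((1-\psi)f_n)-\epsilon e^{\lambda(t-s)}\varphi$ satisfies $D_tw_n^\pm-\A w_n^\pm<0$ in $\Om_{(s,T)}$ (from $D_t\varphi-\A\varphi+\lambda\varphi>0$), $\B w_n^\pm\le 0$ on $(s,T)\times\partial\Om$ (by Hypothesis \ref{hyp-3}(ii), or by $\gamma\equiv 1$ with $\varphi>0$ in the Dirichlet case), and $w_n^\pm(s,\cdot)\le 0$ on $\Om$, so Proposition \ref{nsMP} gives
\begin{align*}
|G_{\B}(\cdot,s)((1-\psi)f_n)|\le\epsilon e^{\lambda(T-s)}\varphi\le C_{K,T}\,\epsilon\qquad\text{on }[s,T]\times K.
\end{align*}
For the piece $\psi f_n$, a further decomposition via $\phi\in C^\infty_c(\Om)$ with $\phi\equiv 1$ on a compact neighborhood of $K$ separates $\phi\psi f_n$ (whose sup-norm tends to $0$ by locally uniform convergence on the compact set $\supp\phi$, so that $G_{\B}(\cdot,s)(\phi\psi f_n)\to 0$ uniformly on $[s,T]\times\Om$ by \eqref{stima-oper-B}) from the residual $(1-\phi)\psi f_n$, whose contribution on $K$ is controlled by a second maximum-principle comparison coupling $\varphi$ with the boundary-distance function $r_\Om$. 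Letting $\epsilon\to 0^+$ proves (ii); the signed-$\gamma$ case reduces to $\gamma\ge 0$ via \eqref{link}.

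For (i), Arzel\`a--Ascoli combined with the Schauder bound and a diagonal extraction imply that every subsequence of $(u_n)$ admits a further subsequence converging in $C^{1,2}_{\rm loc}(\overline\Om_{(s,+\infty)})$ to a bounded $v$ with $D_tv=\A v$ and $\B v=0$. The crux is to identify $v=u:=G_{\B}(\cdot,s)f$. Testing the equation for $u_n$ against $\xi(t,x)=\eta(t)\phi(x)$ with $\phi\in C^\infty_c(\Om)$ and $\eta\in C^\infty_c([s,T])$, $\eta(s)=1$, and integrating by parts in $t$ and $x$ (the spatial boundary terms vanish by the compact support of $\phi$), one obtains an identity pairing $\int_\Om f_n\phi\,dx$ with integrals of $u_n$ against smooth test functions on $[s,T]\times\Om$. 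Passing to the limit via dominated convergence (using the pointwise bounded convergence $f_n\to f$ and the $C^{1,2}_{\rm loc}$ convergence $u_n\to v$ on compacts of $\Om_{(s,T)}$) yields the same identity with $v,f$ in place of $u_n,f_n$; the identity also holds for $u$, so $v-u$ has zero distributional initial trace in $\Om$. Interior parabolic regularity then extends $v-u$ continuously to $\{s\}\times\Om$ with value $0$, and Proposition \ref{nsMP} forces $v\equiv u$. Since every subsequence admits a sub-subsequence converging to $u$, the whole sequence $(u_n)$ converges to $u$ in $C^{1,2}_{\rm loc}(\overline\Om_{(s,+\infty)})$, which proves (i). The main obstacle is precisely this identification step: the Schauder estimates degenerate at $t=s$ for merely bounded data, so the initial trace of $v$ cannot be read directly from the $C^{1,2}_{\rm loc}$ convergence and must be recovered through the weak formulation; a parallel subtlety in (ii) is that locally uniform convergence in $\Om$ does not propagate to $\partial\Om$, which is why the cutoff analysis needs the boundary-distance refinement.
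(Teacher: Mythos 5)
Your proposal has two genuine gaps, one in each part. In (ii), after the Lyapunov-function cutoff (which is fine, and a legitimate variant of the paper's splitting) you are left with the residual piece $(1-\phi)\psi f_n$, whose data is merely bounded by $M$ on a region that reaches up to $\partial\Omega$, where the locally uniform convergence $f_n\to f$ gives no smallness. You claim its contribution on $K$ is ``controlled by a second maximum-principle comparison coupling $\varphi$ with the boundary-distance function $r_\Omega$'', but no such barrier can work uniformly on $[s,T]\times K$: for $t$ bounded away from $s$ the solution emanating from order-one data concentrated near $\partial\Omega$ is in general of order one on $K$ (think of Neumann boundary conditions with data $\equiv M$ on a boundary strip). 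Barriers built from $r_\Omega$, like the estimate $|G_{\mathcal B}(t,s)((1-\eta)f_n)|\le \sup_n\|f_n\|_\infty\,(1-(G_{\mathcal B}(t,s)\eta))$ used in the paper, give smallness only for $t$ close to $s$; to handle $t\in[s+\delta,T]$ one must use the \emph{convergence} of $f_n$ near $\partial\Omega$ in an averaged sense, which is exactly why the paper's proof of (ii) invokes part (i) on $[s+\delta,T]\times K$. Your attempt to prove (ii) independently of (i) therefore does not close.

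In (i), the identification of the subsequential limit $v$ with $G_{\mathcal B}(\cdot,s)f$ via a weak formulation is flawed as written. You integrate by parts ``in $t$ and $x$'', but the coefficients $q_{ij},b_i$ are only in $C^{\alpha/2,\alpha}_{\rm loc}$ (Hypothesis \ref{hyp1}(ii)), so the formal adjoint of $\A(t)$ does not exist and spatial integration by parts is not permitted (the $W^{1,p}_{\rm loc}$ assumption on $q_{ij}$ appears only later, for the Green function, and is not in force here). If instead you integrate by parts only in $t$ and keep $\int_s^T\!\!\int_\Omega (\A u_n)\,\eta\phi$, you need an $n$-uniform, time-integrable bound on $D^2_xu_n$ down to $t=s$, which the interior Schauder estimates do not provide (they degenerate as $t\downarrow s$, and no rate such as $(t-s)\|D^2_xu_n\|_\infty\le C$ is available at this stage of the paper). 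The paper avoids this by identifying the limit through the Riesz representation: the measures $g(t,s,x,dy)$ in \eqref{baglioni-0} represent $G_{\mathcal B}(t,s)$ on $C_0(\Omega)$, the representation extends to $C_b(\Omega)$ by the approximation argument from the proof of Theorem \ref{existence}, and then dominated convergence with the pointwise bounded convergence $f_n\to f$ gives $v=G_{\mathcal B}(\cdot,s)f$, after which subsequence-independence yields convergence of the full sequence. Either adopt that route, or supply the missing uniform control near $t=s$; as it stands the ``zero distributional initial trace'' step is not justified.
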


\begin{proof}
(i) Since $\sup_{n\in\N}\|f_n\|_{\infty}<+\infty$, estimate \eqref{stima-oper-B} and the interior Schauder estimates show that the sequence
$(G_{\mathcal B}(\cdot,s)f_n)$ is bounded in $C^{1+\alpha/2,2+\alpha}(J\times K)$ for any $J$ and $K$ as in the statement.
The compactness argument already used in the proof of Theorem \ref{existence} shows that, there exists a subsequence
$(G_{\mathcal B}(\cdot,s)f_{n_k})$ which converges in $C^{1,2}(J\times K)$ to some function $v$.
To infer that $v=G_{\mathcal B}(\cdot,s)f$, we observe that, for any $(t,s,x)\in\Lambda\times\Omega$ the map
$f\mapsto (G_{\mathcal B}(t,s)f)(x)$ defines a positive and bounded operator from $C_0(\Omega)$ to $\R$. The Riesz representation theorem
shows that there exists a family of positive and finite Borel measures $\{g(t,s,x,dy): (t,s,x)\in\Lambda\times\Omega\}$ such that
\begin{equation}
(G_{\mathcal B}(t,s)\psi)(x)=\int_{\Omega}\psi(y)g(t,s,x,dy),\qquad\;\,I\ni s<t,\;\,x\in\Omega,\;\,\psi\in C_0(\Omega).
\label{baglioni-0}
\end{equation}
From the proof of Theorem \ref{existence} we know that, if $(\psi_n)\subset C^{2+\alpha}_c(\Omega)$ is bounded with respect to the sup-norm and
converges to some $\psi\in C_b(\Omega)$, locally uniformly in $\Omega$, then $G_{\mathcal B}(t,s)\psi_n$ converges to $G_{\mathcal B}(t,s)\psi$
locally uniformly in $\Omega$ as well. Hence, \eqref{baglioni-0} can be extended to any $\psi\in C_b(\Omega)$. Writing it with $\psi$ being
replaced by $f_n$ and letting $n\to +\infty$, by dominated convergence we conclude that $v=G_{\mathcal B}(\cdot,s)f$. Since the limit is independent
of the sequence $(n_k)$, the whole sequence $(G_{\mathcal B}(\cdot,s)f_n)$ converges to  $G_{\mathcal B}(\cdot,s)f$ in
$C^{1,2}(J\times K)$.

(ii) In view of formula \eqref{link}, we can limit ourselves to dealing with the case when $\gamma\ge 0$. For this purpose, fix
$T>s$, a compact set $K\subset\Omega$ and $\varepsilon>0$. Further, let $\eta \in C^{\infty}_c(\Om)$ be such that
$\eta\equiv 1$ in $K$. We can split $f_n= \eta f_n+(1-\eta)f_n$ for any $n\in\N$. Then, by linearity
$G_{\mathcal B}(\cdot,s)f_n= G_{\mathcal B}(\cdot,s)(\eta f_n)+G_{\mathcal B}(\cdot,s)((1-\eta)f_n)$. Since $\eta f_n \in C_c(\Om)$
and converges uniformly to $\eta f$, by \eqref{stima-oper-B} we deduce that $G_{\mathcal B}(\cdot,s)(\eta f_n)$ converges to
$G_{\mathcal B}(\cdot,s)(\eta f)$ uniformly in $\Om_{[s,T]}$. Hence, we just need to prove
that $G_{\mathcal B}(\cdot,s)((1-\eta)f_n)$ converges to $G_{\mathcal B}(\cdot,s)((1-\eta)f_n)$ locally uniformly in $[s,T]\times K$.
The arguments in the proof of Theorem \ref{existence} show that
\begin{eqnarray*}
|G_{\mathcal B}(t,s)((1-\eta)f_n)(x)|\le \sup_{n\in\N}\|f_n\|_{\infty}[1\!-\!(G_{\mathcal B}(t,s)\eta)(x)],\qquad\,t>s,\;x\in\overline\Omega,\;n\in\N.
\end{eqnarray*}
Therefore, as $t\to s^+$, $G_{\mathcal B}(t,s)((1-\eta)f_n)$ and $G_{\mathcal B}(t,s)((1-\eta)f)$ converge to $0$ uniformly in $K$
and uniformly with respect to $n\in\N$. Hence, we can determine $\delta\in (0,T-s)$ such that
\begin{equation}
\|G_{\mathcal B}(\cdot,s)((1-\eta)f_n)\|_{C([s,s+\delta]\times K)}+\|G_{\mathcal B}(\cdot,s)((1-\eta)f)\|_{C([s,s+\delta]\times K)}\le
\frac{1}{2}\varepsilon.
\label{AA}
\end{equation}
On the other hand, property (i) implies that $G_{\mathcal B}(\cdot,s)((1-\eta)f_n)$ converges to $G_{\mathcal B}(\cdot,s)((1-\eta)f)$
uniformly in $[s+\delta,T]\times K$. Thus, there exists $n_0\in\N$ such that
$\|G_{\mathcal B}(\cdot,s)((1-\eta)f_n)-G_{\mathcal B}(\cdot,s)((1-\eta)f)\|_{C([s+\delta,T]\times K)}\le\varepsilon/2$ for any $n\ge n_0$.
From this estimate and \eqref{AA} we conclude that $G_{\mathcal B}(\cdot,s)((1-\eta)f_n)$ tends to $G_{\mathcal B}(\cdot,s)((1-\eta)f_n)$
uniformly in $[s,T]\times K$.
\end{proof}

\begin{rmk}
\label{rmk-strong-feller}
{\rm The proof of Proposition \ref{continuity} shows that
\begin{equation}
(G_{\mathcal B}(t,s)f)=\int_{\Omega}f(y)g(t,s,x,dy),\qquad\;\,t>s\in I,\;\,x\in\Omega
\label{repres-G}
\end{equation}
for any $f\in C_b(\Omega)$. Since any Borel bounded function $f:\Omega\to\R$ can be approximated
pointwise by a bounded sequence in $C_b(\Omega)$, the dominated convergence theorem allows
to extend the evolution operator $G_{\mathcal B}(t,s)$ to all the bounded and Borel measurable
functions, via formula \eqref{repres-G}. Moreover, the same arguments in the proof of Proposition
\ref{continuity}(i) show that $G_{\mathcal B}(\cdot,s)f\in C^{1+\alpha/2,2+\alpha}_{\rm loc}(\Omega_{(s,+\infty)})$
for any bounded and Borel measurable function $f$. In particular, this shows that $G(t,s)$ is Strong Feller.}
\end{rmk}

In the following proposition, under an additional smoothness assumption on the coefficients of
the operator $\A$, we show that each measure $g(t,s,x,dy)$ is absolutely continuous with respect the Lebesgue measure,
and we prove some smoothness properties of its density.

\begin{prop}
Assume that $q_{ij}\in L^{\infty}_{\rm loc}(I;W^{1,p}_{\rm loc}(\Om))$ for some $p>d+2$.
Then, there exists a unique Green function $g_{\mathcal{B}}:\Lambda\times \Om\times \Om\to (0,+\infty)$ associated with
 the Cauchy problem \eqref{NAPC}, i.e.,
\begin{equation}
\label{green}
(G_{\mathcal{B}}(t,s)f)(x)= \int_\Om g_{\mathcal{B}}(t,s,x,y)f(y)\,dy,\qquad\;\, s<t,\;\, x\in \Om,
\end{equation}
for every $f \in C_b(\Omega)$. Moreover, for any $s \in I$ and $y \in \Om$, the function $g_{\mathcal B}(\cdot,s,\cdot,y)$ belongs to
$C^{1+\alpha/2,2+\alpha}_{{\rm {loc}}}(\Om_{(s,+\infty)})$ and satisfies $D_t g_{\mathcal B}-\mathcal{A}g_{\mathcal B}\equiv 0$ in
$\Omega_{(s,+\infty)}$. Finally, $g_{\mathcal B}(t,s,x,\cdot)\in L^1(\Om)$ for every $(t,s)\in \Lambda,\, x\in \Om$ and
\begin{equation}
\label{d_conv}
\| g_{\mathcal B}(t,s,x,\cdot)\|_{L^1(\Om)}\leq
\left\{
\begin{array}{ll}
e^{-c_0(t-s)}, & \gamma\ge 0,\\[1mm]
Me^{H(t-s)}, & {\rm otherwise},
\end{array}
\right.
\qquad\;\, (t,s)\in \Lambda,\, x\in \Om,
\end{equation}
where $H$ and $M$ are as in \eqref{giorgino}.
\end{prop}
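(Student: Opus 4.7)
The plan is to construct $g_{\mathcal B}$ by exhausting $\Omega$ with an increasing sequence $(\Omega_n)$ of bounded smooth subdomains satisfying $\overline{\Omega_n}\subset\Omega_{n+1}$ and $\bigcup_n\Omega_n=\Omega$. On each $\Omega_n$ I would set up the Cauchy problem obtained from \eqref{NAPC} by prescribing homogeneous Dirichlet conditions on the artificial piece $\partial\Omega_n\cap\Omega$ while keeping the operator $\mathcal B$ on $\partial\Omega\cap\partial\Omega_n$. The additional hypothesis $q_{ij}\in L^\infty_{\rm loc}(I;W^{1,p}_{\rm loc}(\Omega))$ with $p>d+2$ allows to rewrite $\A$ in divergence form with principal-part coefficients in $W^{1,p}$ and lower-order terms in $L^p$; classical Aronson--Friedman-type results on bounded smooth domains (cf.\ \cite{LadSolUra68Lin}) then provide, for each $n$, a positive Green function $g_n(t,s,x,y)$ for the corresponding evolution operator $G^n_{\mathcal B}(t,s)$, with $g_n(\cdot,s,\cdot,y)\in C^{1+\alpha/2,2+\alpha}_{\rm loc}$ and $D_t g_n-\A g_n=0$.

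The key steps are monotonicity and passage to the limit. Extending $g_n$ by zero outside $\Omega_n$, an application of Proposition \ref{nsMP} in $\Omega_n$ to $g_{n+1}-g_n$ gives $g_n\le g_{n+1}$, so the pointwise limit $g_{\mathcal B}(t,s,x,y):=\lim_n g_n(t,s,x,y)\in[0,+\infty]$ is well defined. For nonnegative $f\in C_c(\Omega)$ with $\supp f\subset\Omega_N$ and $n\ge N$, the same maximum principle ensures $G^n_{\mathcal B}(t,s)f\le G_{\mathcal B}(t,s)f$. The interior Schauder estimates together with the compactness argument already used in the proof of Theorem \ref{existence} then yield convergence (along a subsequence) in $C^{1+\alpha/2,2+\alpha}_{\rm loc}$, whose limit, thanks to Proposition \ref{continuity} and the uniqueness in Theorems \ref{existence}/\ref{existence-bis}, must coincide with $G_{\mathcal B}(t,s)f$. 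Monotone convergence then yields
\begin{equation*}
(G_{\mathcal B}(t,s)f)(x)=\lim_n\int_{\Omega_n}g_n(t,s,x,y)f(y)\,dy=\int_\Omega g_{\mathcal B}(t,s,x,y)f(y)\,dy
\end{equation*}
for such $f$, and dominated convergence together with the $L^1$ bound below extends this to all $f\in C_b(\Omega)$. Comparison with \eqref{repres-G} identifies $g(t,s,x,dy)=g_{\mathcal B}(t,s,x,y)\,dy$ and gives uniqueness of $g_{\mathcal B}$ as a Lebesgue density.

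Regularity, positivity, and the $L^1$-bound are now essentially automatic. Interior Schauder estimates applied uniformly to the family $(g_n(\cdot,s,\cdot,y))$ on compact subsets of $\Omega_{(s,+\infty)}$ give $g_{\mathcal B}(\cdot,s,\cdot,y)\in C^{1+\alpha/2,2+\alpha}_{\rm loc}(\Omega_{(s,+\infty)})$ and the parabolic equation $D_t g_{\mathcal B}-\A g_{\mathcal B}=0$. Strict positivity follows from the strong maximum principle applied to each $g_n$ together with the monotonicity $g_n\le g_{\mathcal B}$. Applying the integral representation to $f\equiv\one$ (allowed by Remark \ref{rmk-strong-feller}) and using \eqref{stima-oper-B} gives $\|g_{\mathcal B}(t,s,x,\cdot)\|_{L^1(\Omega)}=(G_{\mathcal B}(t,s)\one)(x)$, which is bounded by the right-hand side of \eqref{d_conv}.

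The main obstacle will be the convergence $G^n_{\mathcal B}(t,s)f\to G_{\mathcal B}(t,s)f$, i.e.\ showing that the defect introduced by the artificial Dirichlet condition on $\partial\Omega_n\cap\Omega$ is asymptotically negligible. The Lyapunov function $\varphi$ from Hypothesis \ref{hyp-3} provides the quantitative device: since $\varphi$ blows up at infinity uniformly on bounded time intervals, the nonnegative defect $G_{\mathcal B}(t,s)f-G^n_{\mathcal B}(t,s)f$ can be dominated in $\Omega_n$ by $\|f\|_\infty\varphi/\inf_{\partial\Omega_n\cap\Omega}\varphi$ through an application of Proposition \ref{nsMP}, and this upper bound tends to $0$ locally uniformly as $n\to\infty$. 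A secondary technical point is the construction of $g_n$ on $\Omega_n$ with mixed boundary conditions under merely $W^{1,p}$ regularity of the principal coefficients; this can be handled by first mollifying the $q_{ij}$, applying the classical smooth-coefficient theory, and then letting the mollification parameter tend to zero by means of interior $L^p$-parabolic estimates.
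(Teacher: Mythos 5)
Your construction by exhaustion (artificial Dirichlet condition on $\partial\Omega_n\cap\Omega$, monotonicity of the kernels, Lyapunov-function control of the defect $G_{\mathcal B}(t,s)f-G^n_{\mathcal B}(t,s)f$ via Proposition \ref{nsMP} and Hypothesis \ref{hyp-3}) is a genuinely different route from the paper, which instead approximates the coefficients on the fixed unbounded domain $\Omega$ and keeps a single boundary operator. The monotonicity part would indeed deliver the representation \eqref{green} and the bound \eqref{d_conv}. However, there is a genuine gap in your last paragraph, where you declare regularity and positivity ``essentially automatic''. Monotone convergence only gives a limit with values in $[0,+\infty]$ which, for \emph{fixed} $(t,s,x)$, is finite for a.e.\ $y$ (via the $L^1$ bound); it gives no local bound, uniform in $n$, on $g_n(\cdot,s,\cdot,y)$ for a \emph{fixed} $y$, and without such bounds the ``uniform interior Schauder estimates'' you invoke are simply not available. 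This is precisely where the hypothesis $q_{ij}\in L^\infty_{\rm loc}(I;W^{1,p}_{\rm loc}(\Omega))$, $p>d+2$, enters the paper's proof: it is not needed to build Green functions on bounded subdomains (on compact sets the coefficients are already H\"older by Hypothesis \ref{hyp1}(ii), so classical theory applies), but it is the standing assumption of the Bogachev--Krylov--R\"ockner regularity theory \cite{BogKryRoc01}, applied to the adjoint identity \eqref{scegli}, which yields local H\"older continuity of $(r,y)\mapsto g_n(t,r,x,y)$ with bounds uniform in $n$. That backward regularity gives pointwise (for every $y$) convergence and boundedness of the approximating kernels; this is then upgraded to local uniform bounds in the forward variables via the Harnack inequality \cite{AroSer68Loc}, and only at that stage do interior Schauder estimates produce $g_{\mathcal B}(\cdot,s,\cdot,y)\in C^{1+\alpha/2,2+\alpha}_{\rm loc}$, the equation $D_tg_{\mathcal B}=\A g_{\mathcal B}$, and strict positivity \emph{for every} $y$. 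Your argument never secures boundedness of $(g_n(\cdot,s,\cdot,y))_n$ for a fixed $y$ (Harnack alone cannot close this, since it needs at least one point where a uniform bound is known), so the pointwise-in-$y$ conclusions of the proposition are not reached, and the role of the $W^{1,p}$ hypothesis is misidentified.

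A secondary but real issue is the solvability on $\Omega_n$: your problems are of mixed type, with a homogeneous Dirichlet condition on the artificial cut and the oblique-derivative operator $\mathcal B$ on $\partial\Omega\cap\partial\Omega_n$, and these two conditions meet along a junction. The classical references you lean on (e.g.\ \cite{LadSolUra68Lin}) treat a single boundary condition on a smooth closed boundary and do not directly provide classical solutions or Green functions with the stated regularity near the junction; some additional construction (rounding off the domain away from $\partial\Omega$, or a separate analysis at the corner) is required, and the comparison arguments you use also presuppose a maximum principle and continuity up to this junction. The paper avoids this entirely by truncating the coefficients (and $\beta$, $\gamma$) instead of the domain. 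Finally, the proposed mollification of the $q_{ij}$ is unnecessary, since the diffusion coefficients are locally H\"older continuous by Hypothesis \ref{hyp1}(ii); the Sobolev assumption is needed for the kernel regularity described above, not for solvability.
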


\begin{proof}
We split the proof into four steps. In the first three steps we consider the case when $\gamma \ge 0$.
In the last one, we address the general case.

{\em Step 1.} First, we prove that there exists a function $g_{\mathcal B}:\Lambda\times \Om\times \Om\to (0,+\infty)$ such that \eqref{green} holds.
For this purpose, we consider the evolution operator $G_n(t,s)$ associated to the Cauchy problem \eqref{p_n} in $C_b(\Om)$.
It is well known (see...) that for every $n \in \N$ there exists a function $g_n: \Lambda\times \Om\times\Om \to \R^+$ such that,
for any $I\ni s<t$ and $x\in\Omega$, the function $g_n(t,s,x,\cdot)$ belongs to $L^1(\Omega)$ and
\begin{equation}
\label{repr-Gn}
(G_n(t,s)f)(x)=\int_{\Omega}f(y)g_n(t,s,x,y)dy, \qquad\;\, f \in C_b(\Om),\;\, n \in \N.
\end{equation}

Moreover, the function $g_n(\cdot,s,\cdot,y)$ belongs to $C^{1+\alpha/2,2+\alpha}_{\rm loc}(\Omega_{(s,+\infty)})$ for any $s\in I$, $y \in \Om$
and $D_t g_n(\cdot,s,\cdot,y)= \mathcal{A}^{(n)}g_n(\cdot,s,\cdot,y)$ in $\Omega_{(s,+\infty)}$. Finally, the maximum
principle in Proposition \ref{nsMP} immediately implies that $\|g_n(t,s,x,\cdot)\|_{L^1(\Omega)}\le 1$
for any $t>s\in I$ and $x\in\Omega$.

We now fix $t\in I$, $s_1,s_2\in (-\infty,t)\cap I$ and $x\in\Omega$. Formula \eqref{bietola_appen} shows that $D_s((G_n(t,s)\psi(s,\cdot))(x))=(G_n(t,s)D_s\psi(s,\cdot))(x)-
(G_n(t,s)\A^{(n)}(s)\psi(s,\cdot))(x)$, for any $(s,x)\in\Omega_{I\cap (-\infty,t]}$ and any $\psi\in C^{1,2}_c(\Omega_{(s_1,s_2)})$. Hence,
taking \eqref{repr-Gn} into account we deduce that
\begin{align}\label{scegli}
\int_{\Omega_{(s_1,s_2)}}(D_r\psi(r,y)-{\mathcal A}^{(n)}(r)\psi(r,y))g_n(t,r,x,y)dr\, dy=0,
\end{align}
since $\psi(s_1,\cdot)=\psi(s_2,\cdot)\equiv 0$.
Applying \cite[Cor. 3.9]{BogKryRoc01} to the measures $\mu_n(dr,dy)=g_n(t,r,x,y)dy dr$,  we deduce that $g_n(t,\cdot,x,\cdot)$ is
locally $\theta$-H\"older continuous in $\Omega_{(s_1,s_2)}$ for some $\theta\in (0,1)$, and, by the arbitrariness of $s_1<s_2<t$,
$g_n(t,\cdot,x,\cdot)\in C^{\theta}_{\rm loc}(\Omega_{(-\infty,t)\cap I})$ for any $(t,x)\in \Omega_I$. Moreover, an inspection
of the proof of \cite[Thm 3.8]{BogKryRoc01}, shows that the $C^{\theta}$-norm of $g_n(t,\cdot,x,\cdot)$ over any compact set
$[a,b]\times K\subset\Omega_{(-\infty,t)\cap I}$ can be bounded from above by a constant, independent of $n$.
Hence, a straightforward compactness argument allows to prove that, for any fixed $(t,x)\in\Omega_{(s,+\infty)}$,
there exist a subsequence $(n_k)\subset \N$ and a function
$g_{t,x}\in C^{\theta}_{\rm loc}(\Om_{(-\infty,t)\cap I})$ such that $g_n(t,\cdot,x,\cdot)$ converges to $g_{t,x}$ locally uniformly
in $\Om_{(-\infty,t)\cap I}$. Moreover, since $\|g_n(t,s,x,\cdot)\|_{L^1(\Omega)}\le 1$ for any $n\in\N$,
Fatou lemma shows that $g_{t,x}(s,\cdot)\in L^1(\Omega)$ for any $I\ni s<t$. Hence, we can write \eqref{repr-Gn}, with $n$ being replaced by
$n_k$, and let $k\to +\infty$, using the dominated convergence theorem and taking Theorem \ref{existence} into account, to get
\begin{equation}
(G_{\mathcal B}(t,s)f)(x)=\int_{\Omega}f(y)g_{t,x}(s,y)\,dy, \qquad\;\, f \in C_b(\Om).
\label{fird}
\end{equation}
Formula \eqref{fird} shows also that the function $g_{t,x}$ does not depend on subsequence $(n_k)$
and, therefore all the sequence $(g_n(t,\cdot,x,\cdot))$ converge to $g_{t,x}$ locally uniformly in $\Omega_{(-\infty,t)\cap I}$.
Formula \eqref{green} follows with $g_{\mathcal B}(t,s,x,y)=g_{t,x}(s,y)$.
Moreover, estimate \eqref{stima-oper-B} and formula \eqref{fird} lead to \eqref{d_conv}.

{\em Step 2.} Let us now prove that $g_{\mathcal B}(\cdot,s,\cdot, y)\in C^{1+\alpha/2, 2+\alpha}_{\rm loc}(\Om_{(s,+\infty)})$ and solves
$D_t g_{\mathcal B}(\cdot,s,\cdot,y)-\mathcal{A}g_{\mathcal B}(\cdot,s,\cdot,y)=0$ in $\Om_{(s,+\infty)}$. For this purpose, we begin by observing that,
since $D_tg_n(\cdot,s,\cdot,y)-\A^{(n)} g_n(\cdot,s,\cdot,y)=0$ in $\Omega_{(s,+\infty)}$, the classical
interior Schauder estimates yield that
\begin{equation}\label{schauder}
\|g_n(\cdot, s,\cdot,y)\|_{C^{1+\alpha/2,2+\alpha}(\Om'_{(T_1,T_2)})}\le c \|g_n(\cdot, s,\cdot,y)\|_{L^\infty(\Om''_{(T_0,T_2)})},
\end{equation}
for any $s<T_0<T_1<T_2$, any $\Om'\Subset \Om''\Subset \Om$ and some positive constant $c$ independent of $n$,
since the coefficients of the operator $\A^{(n)}$ converge to the coefficients of the operator $\A$, locally uniformly
in $\Omega_{(s,+\infty)}$.

We claim that, for every $s\in I$ and $y \in \Om$, the function $g_n(\cdot,s,\cdot,y)$ is bounded in $\Om''_{(T_0,T_2)}$, uniformly
with respect to $n$. So, let us fix $s\in I$, $y \in \Om$ and denote by $(t_h)$ and $(x_k)$ two countable sets dense in $[T_0,T_2+1]$
and in $\Om''$, respectively.
By Step 1, $(g_n(t_h,s,x_k,\cdot))$
converges locally uniformly in $\Om$ to $g_{\mathcal B}(t_h,s,x_k,\cdot)$ for any $h,k \in \N$, as $n\to +\infty$.
In particular, there exists a positive constant $c(t_h,x_k)$ such that $g_n(t_h,s,x_k,y)\le c(t_h,x_k)$ for every $h,k,n \in \N$.

Let $R>1$ be such that $s<T_0-2/R$ and let $2r:=\rm{dist}(\Om'', \partial \Om)$. Since
$D_tg_n(\cdot,s,\cdot,y)-{\mathcal A}^{(n)}g_n(\cdot,s,\cdot,y)=0$ in $[T_0-1/R,T_2+2]\times \bigcup_{x \in \Om''}\overline{B_r(x)}$,
using the arguments in \cite[Cor. 3.11]{BogKryRoc01} based on the Harnack inequality in \cite[Thm. 3]{AroSer68Loc},
we see that, if $\rho^2< \min\{r^2,R^{-1}\}$,
then there exists a positive constant $M_0$, independent of $n$, such that
$\max_{\overline{W_1}}g_n(\cdot,s,\cdot,y)\leq M_0 \min_{\overline{W_2}}g_n(\cdot,s,\cdot,y)$,
where $W_1=(t_h-\frac{3}{4}\rho^2, t_h-\frac{1}{2}\rho^2)\times B_{\rho/2}(x_k)$ and $W_2=(t_h-\frac{1}{4}\rho^2, t_h)\times B_{\rho/2}(x_k)$. Consequently,
$g_n(t,s,x,y)\leq M_0 g_n(t_h,s,x_k,y)\le M_0c(t_h,x_k)$
for every $t \in [t_h-\frac{3}{4}\rho^2, t_h-\frac{1}{2}\rho^2]$, $x \in \overline{B_{\rho/2}(x_k)}$.
Since $\Om''_{[T_0,T_2]}$ can be covered by a finite number of cylinders $[t_h-\frac{3}{4}\rho^2, t_h-\frac{1}{2}\rho^2]\times B_{\rho/2}(x_k)$,
from the previous chain of inequalities we deduce that $g_n(\cdot,s,\cdot,y)$ is uniformly bounded in
$\Om''_{[T_0,T_2]}$ by a constant independent of $n$, as it has been claimed.

From \eqref{schauder}, we now deduce that for any $s,y$ as above, the $C^{1+\alpha/2,2+\alpha}$-norm of the function
$g_n(\cdot,s,\cdot,y)$ over the cylinder $\Omega'_{(T_1,T_2)}$ can be bounded from above by a constant independent of $n$.
Consequently, by the Arzel\`a-Ascoli theorem, there exist an increasing sequence $(n_k)\subset\N$ and a function
$\tilde{g}_{s,y} \in C^{1+\alpha/2,2+\alpha}(\Omega'_{(T_1,T_2)})$ such that $g_{n_k}(\cdot,s,\cdot,y)$ converges to
$\tilde{g}_{s,y}$ in $C^{1,2}(\Omega'_{(T_1,T_2)})$, as $k \to +\infty$. Clearly $\tilde{g}_{s,y}$ belongs
to $C^{1+\alpha/2,2+\alpha}(\Omega'_{(T_1,T_2)})$ and satisfies the differential equation
in \eqref{NAPC} in $\Omega'_{(T_1,T_2)}$. Since $g_n$ converges to $g_{\mathcal B}$ pointwise in $\Lambda\times\Omega\times\Omega$,
we can infer that $\tilde g_{s,y}=g_{\mathcal B}(\cdot,s,\cdot,y)$. The arbitrariness of $T_1$, $T_2$, $\Omega'$ allows us to conclude
that $g_{\mathcal B}(\cdot,s,\cdot,y)\in C^{1+\alpha/2,2+\alpha}_{\rm loc}(\Omega_{(s,+\infty)})$ and
$D_tg_{\mathcal B}(\cdot,s,\cdot,y)-\A g_{\mathcal B}(\cdot,s,\cdot,y)=0$ in $\Omega_{(s,+\infty)}$.

{\em Step 3.} Here, we prove that $g_{\mathcal B}$ is strictly positive in $\Lambda\times\Om\times\Om$.
From Theorem \ref{existence} we know that $G_{\mathcal B}(t,s)f$ is strictly positive in $\Omega$, for any $f\in C_b(\Omega)$. This
implies that $g_{\mathcal B}$ is nonnegative. Indeed, if $g(t_0,s_0,x_0,y_0)<0$ for some $(t_0,s_0,x_0,y_0)\in\Lambda\times\Omega\times\Omega$,
since the function $g_{\mathcal B}(t_0,s_0,x_0,\cdot)$ is continuous in $\Omega$, there would exist $\sigma>0$ such that $g(t_0,s_0,x_0,\cdot)<0$
in $B_{\sigma}(y_0)$. Taking a function $f\in C_b(\Omega)$ such that $\chi_{B_{\sigma/2}(y_0)}\le f\le\chi_{B_{\sigma}(y_0)}$ we would get $(G(t_0,s_0)f)(x_0)<0$, which is a contradiction.

Suppose that $g_{\mathcal B}(t_0,s_0,x_0,y_0)=0$ at some point $(t_0,s_0,x_0,y_0)$. Fix $t_1<s_1<s_2<s_0<t_2<t_0$ and $R>0$
such that $B_R(y_0)\Subset \Om$. Letting $n \to +\infty$ in \eqref{scegli} we get that $(D_r-\mathcal{A})^*g_{\mathcal B}(t_0,\cdot,x_0,\cdot)=0$ in $(t_1,t_2)\times B_R(y_0)$. Hence, using again the arguments in \cite[Cor. 3.11]{BogKryRoc01} we deduce that $g_{\mathcal B}(t_0, \cdot, x_0, \cdot)=0$ in $(s_1,s_2)\times B_R(y_0)$.
Now, fix $s \in(s_1,s_2)$ and a nonnegative function $f \in C_c(B_R(y_0))$ such that $f(y_0)>0$. From \eqref{green} it follows that
$(G_{\mathcal B}(t_0,s)f)(x_0)=0$, which cannot be the case since $G_{\mathcal B}(t_0,s)f>0$ in $\Omega$.

{\em Step 4.}
Finally, in the general case when no assumption on the sign of $\gamma$ is assumed, we can apply the first part of the proof to
the evolution operator $\tilde{G}_{\tilde{\mathcal B}}(t,s)$ associated with  the Cauchy problem \eqref{pb-per-v} in $C_b(\Om)$.
Then, there exists a unique function $\tilde g_{\tilde{\mathcal{B}}}:\Lambda\times \Om\times \Om\to (0,+\infty)$
such that
\begin{equation*}
(\tilde{G}_{\tilde{\mathcal B}}(t,s) f)(x)= \int_\Om \tilde g_{\tilde{\mathcal B}}(t,s,x,y)f(y)\,dy,\qquad\;\, s<t,\;\, x\in \Om,\;\, f \in C_b(\Om).
\end{equation*}
Moreover, for any $s \in I$ and any $y \in \Om$,
$\tilde g_{\tilde{\mathcal B}}(\cdot,s,\cdot,y)\in C^{1+\alpha/2,2+\alpha}_{{\rm {loc}}}(\Omega_{(s,+\infty)})$,
satisfies $D_t\tilde g_{\tilde{\mathcal B}}-\tilde{\mathcal{A}}\tilde g_{\tilde{\mathcal B}}=0$ in $\Omega_{(s,+\infty)}$ and
$\|\tilde g_{\tilde{\mathcal B}}(t,s,x,\cdot)\|_{L^1(\Om)}\le 1$ for any $(t,s)\in \Lambda$, $x \in \Om$. Now,
taking into account formula \eqref{link} we conclude that the function $g_{\mathcal B}$, defined by
$g_{\mathcal B}(t,s,x,y)= \phi(x)(\phi(y))^{-1}e^{H(t-s)}\tilde g_{\tilde{\mathcal B}}(t,s,x,y)$ for any
$(t,s,x,y)\in\Lambda\times\Omega\times\Omega$, satisfies the claim and estimate \eqref{d_conv}
holds.
\end{proof}

\subsection{Compactness}

We now provide a sufficient condition for $G_{\mathcal B}(t,s)$ to be compact in $C_b(\Omega)$.
In the case when $\Omega$ is replaced by $\Rd$, the compactness of the evolution operator $G_{\mathcal B}(t,s)$
has been studied in \cite{AngLor10Com,Lun10Com}.

\begin{rmk}
\label{rmk-Gn-to-GB}
{\rm
Suppose that $\gamma\ge 0$ and let $G_n(t,s)$ be the evolution operator associated with the pair $(\A^{(n)},{\mathcal B}^{(n)})$ in
$C_b(\Omega)$, introduced in the proof of Proposition \ref{smoothdatum}. As it has been shown, for any $f\in C^{2+\alpha}_c(\Omega)$,
$G_n(t,s)f$ converges to $G_{\mathcal B}(t,s)f$ locally uniformly in
$\Omega$, as $n\to +\infty$, for any $I\ni s<t$. Actually, this happens also if $f\in C_b(\Omega)$.
If $f\in C_c(\Omega)$, let the sequence $(f_k)\subset C^{2+\alpha}_c(\Omega)$
converge to $f$ uniformly in $\Omega$. Then,
splitting $G_n(t,s)f-G_{\mathcal B}(t,s)f=(G_n(t,s)f-G_n(t,s)f_k)+(G_n(t,s)f_k-G_{\mathcal B}(t,s)f_k)
+(G_{\mathcal B}(t,s)f_k-G_{\mathcal B}(t,s)f)$ and observing that, by the maximum principle in Proposition \ref{nsMP}, $\|G_n(t,s)f\|_\infty \le \|f\|_\infty$ for any $I \ni s<t$ and $f \in C_b(\Om)$, we easily deduce that
\begin{align*}
\|G_n(t,s)f-G_{\mathcal B}(t,s)f\|_{C(K)}
\le 2\|f_k-f\|_\infty+\|G_n(t,s)f_k-G_{\mathcal B}(t,s)f_k\|_{C(K)},
\end{align*}
for any compact set $K\subset\Omega$.
Letting, first $n$ and then $k$ tend to $+\infty$, we deduce that $G_n(t,s)f$ tends to $G_{\mathcal B}(t,s)f$ locally uniformly in $\Omega$.

In the general case, we can argue as in the proof of
Theorem \ref{existence}, replacing $u_{f_n}$ with $G_n(\cdot,s)f$, $u_{(1-\psi)f_n}$ with $G_n(\cdot,s)((1-\psi)f)$
and $u_{\psi f_n}$ with $G_n(\cdot,s)(\psi f)$. Splitting $G_n(t,s)f=G_n(t,s)(\psi f)+G_n(t,s)((1-\psi)f)$ and observing that the estimate
$|G_n(t,s)((1-\psi)f)|\le \|f\|_{\infty}(1-G_n(t,s)\psi)$ holds for any $t>s$, we get
$|(G_n(t,s)f)(x)-f(x)|\le |(G_n(t,s)(f\psi))(x)-f(x)\psi(x)|+\|f\|_{\infty}(1-(G_n(t,s)\psi)(x))$ for any $(t,x)\in (s,+\infty)\times K$.
Let $(G_{n_k}(\cdot,s)f)$ be a subsequence which converges, locally uniformly in $\Omega_{(s,+\infty)}$ to
a function $u$, which turns out to solve the differential equation $D_tu-\A u=0$ in $\Omega_{(s,+\infty)}$ and satisfies the boundary condition
$\B u=0$ on $(s,+\infty)\times\partial\Omega$. Since $G_n(\cdot,s)(f\psi)$ converges to $G_{\mathcal B}(\cdot,s)(f\psi)$ locally uniformly
in $\Omega_{(s,+\infty)}$, we can write the previous estimate with $n$ being replaced by $n_k$ and let $k\to +\infty$,
to infer that $|u(t,x)-f(x)|\le |(G_{\mathcal B}(t,s)(f\psi))(x)-f(x)\psi(x)|+\|f\|_{\infty}(1-(G_{\mathcal B}(t,s)\psi)(x))$ and conclude that
$u$ is continuous on $\{s\}\times K$. Hence, $u=G_{\mathcal B}(\cdot,s)f$. Since the limit is independent of the sequence $(n_k)$, the whole sequence $G_n(\cdot,s)f$ converges to
$G_{\mathcal B}(\cdot,s)f$ locally uniformly in $\Omega$.}
\end{rmk}

\begin{thm}
\label{thm-comp} Assume that there exist a bounded interval $J\subset I$, $c_1,c_2>0$ a
positive function $\psi\in C^2(\Omega)$ and $\varepsilon>0$, such that $\lim_{x \in \Om, |x|\to \infty}\psi(x)=\infty$ and
$(\mathcal{A}(t)\psi)(x)\leq c_2-c_1(\psi(x))^{1+\varepsilon}$ for any $(t,x)\in J\times\Omega$.
Then, $G_{\mathcal B}(t,s)$ is compact in $C_b(\Omega)$ for any $(t,s)\in \Lambda\cap J^2$.
\end{thm}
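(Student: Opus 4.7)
The plan is to establish tightness of the family $\{g_{\mathcal B}(t,s,x,dy):x\in\Omega\}$ for $(t,s)\in\Lambda\cap J^2$, from which compactness of $G_{\mathcal B}(t,s)$ on $C_b(\Omega)$ follows by the general principle recalled in the introduction: interior Schauder estimates make the image $\{G_{\mathcal B}(t,s)f:\|f\|_\infty\le 1\}$ equicontinuous on compact sets of $\overline\Omega$, and uniform tightness of the measures upgrades local uniform precompactness to precompactness in the sup-norm. Because $\psi\to+\infty$ as $|x|\to+\infty$ in $\Omega$, Markov's inequality
\begin{equation*}
g_{\mathcal B}(t,s,x,\{\psi>R\})\le R^{-1}\!\int_\Omega\!\psi(y)\,g_{\mathcal B}(t,s,x,dy)
\end{equation*}
reduces tightness to the $x$-uniform moment bound $\sup_{x\in\Omega}(G_{\mathcal B}(t,s)\psi)(x)\le K(t-s)$ for some finite $K$.

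To derive this moment bound, fix $(t,x)\in J\times\Omega$ and set $F(r):=(G_{\mathcal B}(t,r)\psi)(x)$ for $r\in[s,t]$. The backward Kolmogorov identity $\partial_rG_{\mathcal B}(t,r)=-G_{\mathcal B}(t,r)\A(r)$, together with the hypothesis $\A(r)\psi\le c_2-c_1\psi^{1+\varepsilon}$, the positivity of $G_{\mathcal B}$, the bound $G_{\mathcal B}(t,r)\one\le 1$ and Jensen's inequality applied to the sub-probability measure $g_{\mathcal B}(t,r,x,\cdot)$ with the convex function $z\mapsto z^{1+\varepsilon}$, yields the differential inequality
\begin{equation*}
-F'(r)\le c_2-c_1\,F(r)^{1+\varepsilon},\qquad r\in(s,t).
\end{equation*}
Reversing time via $\tilde F(\tau):=F(t-\tau)$, the function $\tilde F$ is a sub-solution of the autonomous ODE $\phi'=c_2-c_1\phi^{1+\varepsilon}$ with $\tilde F(0)=\psi(x)<+\infty$. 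The maximal solution $\phi_\infty$ of this ODE with $\phi_\infty(0^+)=+\infty$ is finite for every $\tau>0$ and decreases from $+\infty$ toward the equilibrium $(c_2/c_1)^{1/(1+\varepsilon)}$; comparing $\tilde F$ with the shifted solutions $\tau\mapsto\phi_\infty(\tau+\delta)$ (choosing $\delta>0$ small enough that $\phi_\infty(\delta)\ge\psi(x)$) and then letting $\delta\to 0^+$ gives $\tilde F(\tau)\le\phi_\infty(\tau)$ uniformly in $x$. Evaluated at $\tau=t-s$, this delivers the required bound with $K(t-s):=\phi_\infty(t-s)$.

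The main obstacle will be making the Duhamel step rigorous when $\psi$ is unbounded. The plan is to carry it out first on bounded, smooth, concave truncations $\tilde\psi_N:=\phi_N(\psi)\in C^2_b(\Omega)$, where $\phi_N\in C^2(\R)$ is concave, nondecreasing, with $\phi_N(r)=r$ for $r\le N$ and $\phi_N(r)\equiv N+1$ for $r\ge N+2$. Concavity of $\phi_N$, combined with the ellipticity of $Q$, forces $\phi_N''(\psi)\langle Q\nabla\psi,\nabla\psi\rangle\le 0$, and together with $c\ge 0$ this yields $\A\tilde\psi_N\le c_2-c_1\tilde\psi_N^{1+\varepsilon}$ on $\{\psi\le N\}$; the error localized on $\{\psi>N\}$ vanishes in the relevant integrals thanks to the $N$-uniform a priori bound on $(G_{\mathcal B}(t,r)\tilde\psi_N)(x)$. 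Since Duhamel's identity and Jensen are classical for bounded smooth $\tilde\psi_N$, the preceding ODE argument produces the $N$-independent estimate $(G_{\mathcal B}(t,s)\tilde\psi_N)(x)\le\phi_\infty(t-s)$, which is transferred to $(G_{\mathcal B}(t,s)\psi)(x)$ by the monotone convergence theorem on the positive measure $g_{\mathcal B}(t,s,x,\cdot)$. Finally, the case of general $\gamma$ reduces to the case $\gamma\ge 0$ via the conjugation \eqref{link} with the auxiliary problem \eqref{pb-per-v}, at the expense of multiplying $K(t-s)$ by constants depending on $\phi$ and $H$.
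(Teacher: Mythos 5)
Your plan reproduces the paper's proof in its essential architecture: reduction to $\gamma\ge 0$ via \eqref{link}; compactness from tightness of the measures $g_{\mathcal B}(t,s,x,dy)$; tightness from the uniform moment bound $\sup_{x\in\Om}(G_{\mathcal B}(t,s)\psi)(x)<+\infty$ via the Markov inequality; and the moment bound from a backward (Duhamel-type) inequality combined with H\"older/Jensen for the sub-probability measures and an ODE comparison, with the unboundedness of $\psi$ handled by concave truncations $\phi_N(\psi)$ and monotone convergence. The cosmetic differences (comparison with the maximal solution blowing up at $0$ instead of the solution starting from $\psi(x)$; a differential rather than integrated inequality --- note that a priori $r\mapsto (G_{\mathcal B}(t,r)\psi)(x)$ is not known to be differentiable, so the comparison should be run on the integral inequality, as the paper does) are harmless.

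The genuine gap is the assertion that ``Duhamel's identity and Jensen are classical for bounded smooth $\tilde\psi_N$''. In this setting it is not classical, and it is exactly where most of the paper's work lies. First, the coefficients of $\A$ are unbounded, so no off-the-shelf backward Kolmogorov formula is available for $G_{\mathcal B}$: the paper first passes to the operators $\A^{(k)},\B^{(k)}$ with truncated (bounded) coefficients, applies Theorem \ref{appendicite} to the corresponding evolution operators $G_k$, and only afterwards lets $k\to+\infty$ using Remark \ref{rmk-Gn-to-GB} and Lemma \ref{lem-int}. Second, Theorem \ref{appendicite} is stated for compactly supported data, whereas $\tilde\psi_N$ is not compactly supported and need not satisfy ${\mathcal B}\tilde\psi_N=0$; the paper sidesteps this by applying the formula to $\psi_N-(N+1)$ and retaining only a one-sided inequality. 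Third, $\A(r)\tilde\psi_N$ is itself unbounded (it contains $-c\,\tilde\psi_N$ with $c$ possibly unbounded and $\tilde\psi_N\equiv N+1$ at infinity), so even the term $G_{\mathcal B}(t,r)\A(r)\tilde\psi_N$ must be given a meaning through the representing measures; the paper does this by splitting off the potential term against cut-offs $\vartheta_m$ and using monotone convergence, and in the passage $N\to+\infty$ it exploits that concavity gives $\A\tilde\psi_N\le\phi_N'(\psi)\A\psi$ everywhere together with the boundedness of $\{\A\psi>0\}$, rather than an error ``localized on $\{\psi>N\}$'' that simply vanishes. A smaller gloss: tightness does not by itself ``upgrade'' local precompactness to sup-norm precompactness, since the image functions are not small at infinity; one needs the factorization $G_{\mathcal B}(t,s)=G_{\mathcal B}(t,r)G_{\mathcal B}(r,s)$ at an intermediate time, i.e.\ the paper's operator-norm approximation by the compact operators $S_nf=G_{\mathcal B}(t,r)(\chi_{\Omega_n}G_{\mathcal B}(r,s)f)$; your uniform moment bound does give tightness for all pairs in $\Lambda\cap J^2$, so this repair is available within your scheme.
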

\begin{proof}
In view of formula \eqref{link}, we can limit ourselves to considering the case when $\gamma\ge 0$. Moreover, it suffices to prove that
\begin{equation}\label{tight}
\lim_{n\to +\infty}\sup_{x\in\Omega}g_\B(t,s,x,\Omega\setminus\Omega_n)=0,\qquad\;\,(t,s)\in\Lambda\cap J^2,
\end{equation}
where the measures $g_\B(t,s,x,dy)$ are defined in \eqref{repres-G}.
Formula \eqref{tight} implies that for any $(t,s)\in\Lambda\cap J^2$, the evolution operator $G_{\mathcal B}(t,s)$ is the limit
of the sequence of operators $(S_n)$ defined by $S_nf=G_{\mathcal B}(t,r)(\chi_{\Omega_n}G_{\mathcal B}(r,s)f)$ for any $f\in C_b(\Omega)$ and
$n\in\mathbb N$, where $r=(s+t)/2$. Indeed, $\|G_{\mathcal B}(t,s)-S_n\|_{\mathcal{L}(C_b(\Omega))}\le\sup_{x\in\Omega}g_\B(t,r,x,\Omega\setminus \Omega_n)$.
Note that each operator $S_n$ is well defined, in view of Remark
\ref{rmk-strong-feller}, and is compact. Indeed, if $(f_k)$ is
a bounded sequence in $C_b(\Omega)$, then the interior Schauder estimates shows that the sequence $(G_{\mathcal B}(r,s)f_k)$ is bounded in
$C^{2+\alpha}(\Omega_n)$. Hence, it admits a subsequence $(G_{\mathcal B}(r,s)f_{k_m})$ uniformly converging in $\Omega_n$. As a byproduct, using formula \eqref{repres-G}, we deduce that
$S_nf_{k_m}$ converges uniformly in $\Omega$ as $m\to +\infty$, whence $S_n$ is a compact operator.

To prove \eqref{tight} we argue as follows: first we prove that $\psi$ is integrable with respect to
the measures $g(t,s,x,dy)$ for any $(t,s)\in \Lambda\cap J$ and $x\in\Omega$ (in what follows, with
a slight abuse we denote by $(G_{\mathcal B}(t,s)\psi)(x)$ the integral of the function $\psi$ with
respect to the measure $g(t,s,x,dy)$, whenever the integral makes sense, even if it is not finite). Then, we
show that, for any $s \in J$ and $\delta>0$, $(G_{\mathcal B}(t,s)\psi)(x)$ is bounded in $([s+\delta,+\infty)\cap J)\times\Omega$. This is enough
for our aims. Indeed,
\begin{align*}
g(t,s,x,\Omega\setminus\Omega_n)&\leq \frac{1}{k_n}\int_{\Omega\setminus\Omega_n}\psi(y)g(t,s,x,dy)
\leq \frac{1}{k_n}(G_{\mathcal B}(t,s)\psi)(x)
\leq \frac{M}{k_n},
\end{align*}
for some positive constant $M$, where $k_n:=\inf\{\psi(y): y\in\Omega\setminus\Omega_n\}$
tends to $+\infty$ as $n\to +\infty$.

We split the rest of the proof in three steps.

{\em Step 1.} For any $n\in\N$, let $\psi_n=\phi_n\circ\psi$, where $\phi_n \in C^{2}([0,+\infty))$ is
an increasing and concave function such that $\phi_n(r)= r$ for any $r \in (0,n)$ and $\phi_n(r)=n+1$ for any $r \in (n+2,+\infty)$.
Clearly, for any $n\in\N$, $\psi_n$ belongs to $C^2(\Omega)$  and is constant outside a compact set.
Let us prove that
$(G_{\mathcal B}(t,\sigma)\A(\sigma)\psi_n)(x)$ is well defined for any $(t,\sigma)\in \Lambda\cap J^2$, $x\in \Omega$ and
\begin{align}
(G_{\mathcal B}(t,r)\psi_n)(x)-(G_{\mathcal B}(t,s)\psi_n)(x)\ge &-\int_s^r(G_{\mathcal B}(t,\sigma)(\phi_n'(\psi)\A(\sigma)\psi))(x)
d\sigma,
\label{est-2}
\end{align}
for any $(t,r), (t,s)\in \Lambda \cap J^2$, $x\in\Omega$ and any $n\in\N$. In the rest of this step $s$, $r$, $t$, $x$ and $n$ are arbitrarily
fixed as above.

Let $G_k(t,s)$ be the evolution operator associated in $C_b(\Om)$ with  $(\mathcal{A}^{(k)},\mathcal{B}^{(k)})$, introduced in the proof Proposition \ref{smoothdatum}.
Applying Theorem \ref{appendicite}, with $f$ being replaced with $\zeta_n:=\psi_n-n-1$, observing that $\zeta_n-G_k(t,s)\zeta_n\le \psi_n-G_k(t,s)\psi_n$ and recalling that $G_k(t,\sigma)$ preserves positivity, we deduce that
\begin{align}
\psi_n(x)-(G_k(t,s)\psi_n)(x)\ge
&-\int_s^t(G_k(t,\sigma)((\A^{(k)}(\sigma)+c^{(k)}(\sigma,\cdot))\psi_n)(x)d\sigma\notag\\
&+\int_s^t(G_k(t,\sigma)(c^{(k)}(\sigma,\cdot)\psi_n\vartheta_m))(x)d\sigma,
\label{mare}
\end{align}
for any $k,m\in\N$ and $I\in s<t$, where $\vartheta_m\in C_c(\Omega)$ is supported in $\Omega_m$ and its image is
contained in $[0,1]$.  Here, we have taken into account that $c^{(k)(\sigma,\cdot)}\psi_n\ge c^{(k)(\sigma,\cdot)}\psi_n\vartheta_m$ and that $G_k(t,\sigma)$
preserves positivity. Writing \eqref{mare} with $t=r$ and applying Lemma \ref{lem-int}, with $T=G_k(t,r)$ (noting that
the functions $G_k(r,\cdot)((\A^{(k)}+c^{(k)}(\sigma,\cdot)\psi_n)$ and
$G_k(r,\cdot)(c^{(k)}(\sigma,\cdot)\psi_n\vartheta_m)$ are continuous in $\Omega_{I\cap (-\infty,r]}$, yields
\begin{align}
(G_k(t,r)(\psi_n)(x)-(G_k(t,s)\psi_n)(x)\ge
&-\int_s^r(G_k(t,\sigma)((\A^{(k)}(\sigma)+c^{(k)}(\sigma,\cdot))\psi_n)(x)d\sigma\notag\\
&+\int_s^r(G_k(t,\sigma)(c^{(k)}(\sigma,\cdot)\psi_n\vartheta_m))(x)d\sigma.
\label{mare-1}
\end{align}

We now want to let $k\to +\infty$ in the first and last side of \eqref{mare} and \eqref{mare-1}. For this purpose,
we observe that, for $k$ large enough (which is independent of $\sigma\in [s,r]$)
$(\A^{(k)}(\sigma)+c^{(k)}(\sigma,\cdot))\psi_n=(\A(\sigma)+c(\sigma))\psi_n\in C_b(\Omega)$
and $c^{(k)}\psi_n\vartheta_m=c\psi_n\vartheta_m\in C_b(\Omega)$.
Therefore, taking Remark \ref{rmk-Gn-to-GB} into account, by dominated convergence we conclude that
formula \eqref{mare-1} holds true with $G_k(\cdot,\cdot)$, $\A^{(k)}$ and $c^{(k)}$ being replaced, respectively, by
$G_{\mathcal B}(\cdot,\cdot)$, $\A$ and $c$.
Letting $m\to +\infty$ by monotone convergence, shows that
$(G_{\mathcal B}(t,\sigma)(c(\sigma,\cdot)\psi_n))(x))$ is finite for almost every
$\sigma\in (s,t)$, $(G_{\mathcal B}(t,\cdot)(c\psi_n\vartheta_m))(x)$ tends to $(G_{\mathcal B}(t,\cdot)(c\psi_n))(x)$
in $L^1((s,t))$ as $m\to +\infty$ and
\begin{align*}
(G_{\mathcal B}(t,r)\psi_n)(x)-(G_{\mathcal B}(t,s)\psi_n)(x)\ge &-\int_s^r(G_{\mathcal B}(t,\sigma)(\A(\sigma)\psi_n))(x)
d\sigma.
\end{align*}
Using the inequality $r\psi'_n(r)-\psi_n(r)\le 0$ for any $r\ge0$ (which easily follows recalling that $\phi_n$ is concave) we deduce that
$\A\psi_n\le \phi_n'(\psi)\A\psi$ and, hence, $G_{\mathcal B}(t,\cdot)\A\psi_n\le G_{\mathcal B}(t,\cdot)(\phi_n'(\psi)\A\psi)$,
and this latter function is integrable in $(s,r)$ since it differs from $G_{\mathcal B}(t,\cdot)\A\psi_n$ in bounded terms.
Estimate \eqref{est-2} now follows.

{\em Step 2.}
Here, using the results in Step 1, we prove that $(G_{\mathcal B}(t,\sigma)\A(\sigma)\psi)(x)$ is well defined for any
$\sigma\in J\cap (-\infty,t]$ and
\begin{equation}\label{est_1}
(G_{\mathcal B}(t,r)\psi)(x)-(G_{\mathcal B}(t,s)\psi)(x) \geq -\int_s^r (G_{\mathcal B}(t,\sigma)\mathcal{A}(\sigma)\psi)(x)d\sigma.
\end{equation}
We begin by observing that, by monotone convergence,
$(G_{\mathcal B}(t_2,t_1)\psi_n)(x)$ tends to $(G_{\mathcal B}(t_2,t_1)\psi)(x)$ as $n\to +\infty$, for any $(t_2,t_1)\in\Lambda$ and any $x\in\Omega$. This limit
might be, {\it apriori}, $+\infty$. We will show that this is not the case.
For this purpose, we use \eqref{est-2} with $r=t$, to infer that
\begin{align}
\psi_n(x)\ge &-\int_s^t(G_{\mathcal B}(t,\sigma)(\chi_{\{\A(\sigma)\psi>0\}}\phi'_n(\psi)\A(\sigma)\psi)(x)d\sigma\notag\\
&+\int_s^t(G_{\mathcal B}(t,\sigma)(\chi_{\{\A(\sigma)\psi<0\}}\phi'_n(\psi)|\A(\sigma)\psi|)(x)d\sigma.
\label{est-3}
\end{align}
Since the set $\{(\sigma,x)\in \Omega_J: (\A(\sigma)\psi)(x)>0\}$ is bounded (due to our assumption on $\psi$),
$G_{\mathcal B}(t,\sigma)(\chi_{\{\A(\sigma)\psi>0\}}\phi'_n(\psi)\A(\sigma)\psi)(x)$ converges in a dominated way to
$G_{\mathcal B}(t,\sigma)(\chi_{\{\A(\sigma)\psi>0\}}(\psi)\A(\sigma)\psi)(x)$ for any $\sigma\in [s,t]$.
Estimate \eqref{est-3} now show that the sequence
$((G_{\mathcal B}(t,\sigma)(\chi_{\{\A(\sigma)\psi<0\}}\phi'_n(\psi)|\A(\sigma)\psi|)(x))$ is
bounded in $L^1((s,t))$. Hence, by monotone convergence, it tends to
$(G_{\mathcal B}(t,\sigma)(\chi_{\{\A(\sigma)\psi<0\}}|\A(\sigma)\psi|)(x)$
in $L^1((s,t))$. Summing up, we have proved that
\begin{align*}
\lim_{n\to +\infty}\int_s^t(G_{\mathcal B}(t,\sigma)(\phi'_n(\psi)\A(\sigma)\psi)(x)d\sigma=
\int_s^t(G_{\mathcal B}(t,\sigma)(\A(\sigma)\psi)(x)d\sigma.
\end{align*}
Using again \eqref{est-2} (with $r=t$), we conclude that the sequence $(G_{\mathcal B}(t,s)\psi_n)(x))$ is bounded,
as claimed.

Now, taking the above results into account, we can let $n\to +\infty$ in \eqref{est-2} and obtain \eqref{est_1}.

{\em Step 3.}
Here, we prove that the function $(G_{\mathcal B}(t,\cdot)\psi)(x)$ is bounded, in any compact interval contained
in the $J\cap (-\infty,t)$, by a constant independent of $x$. Since $\A\psi\le c_2-c_1\psi^{1+\varepsilon}$ in $\Omega_J$
it follows that $G_{\mathcal B}(t,\cdot)\A\psi\le c_2G_{\mathcal B}(t,\cdot)\one-c_1G(t,\cdot)\psi^{1+\varepsilon}$.
In particular, this inequality shows that $(G(t,s)\psi^{1+\varepsilon})(x)<+\infty$.
H\"older inequality and the fact that $0<g(t,s,x,\Omega)=(G_{\mathcal B}(t,s)\one)(x)\leq 1$ for every $t>s$ and
$x\in\Omega$ show that
$\left ((G(t,s)\psi)(x)\right )^{1+\varepsilon}\leq (G(t,s)\psi^{1+\varepsilon})(x)$ for any $t>s\in I$ and $x\in\Omega$. Hence, from the above results and \eqref{est_1}, and recalling that $G_{\mathcal B}(t,s)\one\le\one$ for any $(t,s)\in\Lambda$, we get
\begin{equation}\label{est_5}
(G_{\mathcal B}(t,r)\psi)(x)-(G_{\mathcal B}(t,s)\psi)(x) \geq -c_2(r-s)
+c_1\int_s^r \left((G_{\mathcal B}(t,\sigma)\psi)(x)\right )^{1+\varepsilon}d\sigma.
\end{equation}

Let us set $\zeta(r):=(G_{\mathcal B}(t, t-r)\psi)(x)$ for any $r\in [0,\overline{r})$, where $\overline{r}=t-\inf I$.
Estimate \eqref{est_5} shows that the function $r\mapsto\zeta(r)-c_2r$ is decreasing. As a byproduct, $\zeta$ admits
left and right limits at any point $r\in (0,\overline{r})$. Moreover,
\begin{equation}
\lim_{r\to r_*^-}\zeta(r)\ge \zeta(r_*)\ge\lim_{r\to r_*^-}\zeta(r),\qquad\;\,r_*\in (0,\overline{r}).
\label{est-6}
\end{equation}

For any $x\in\Omega$, let $y(\cdot;x)$ denote the solution
of the differential equation $y'(r)=-c_1(y(r))^{1+\varepsilon}+c_2$, $r>0$, which satisfies the condition $y(0)=\psi(x)$.
Clearly, $y(\cdot;x)$ is defined in $[0,+\infty)$ and
\begin{eqnarray*}
{\mathcal H}(y(t)):=\int_{y(t;x)}^{+\infty}\frac{1}{c_1z^{1+\varepsilon}-c_2}dz\ge \delta,\qquad\;\,t\ge\delta.
\end{eqnarray*}
Hence $y(t;x)\in {\mathcal H}^{-1}((\delta,+\infty))$, which is a bounded set since $\lim_{\sigma\to +\infty}{\mathcal H}(\sigma)=0$.
Thus, the function $y(\cdot,x)$ is bounded by $M:={\mathcal H}^{-1}((\delta,\infty))$ for any $x\in\Omega$.

To conclude the proof, let us show that $\zeta\le y(\cdot;x)$ for any $x\in\Omega$.
We argue by contradiction: we suppose that there exist $s_0\in (0,\overline{r})$ and $x\in\Omega$ such that
$\zeta(s_0)>y(s_0;x)$, and we show that $\zeta>y(\cdot;x)$ in $[0,s_0]$ (this, of course, leads to a contradiction since
 $\zeta(0)=y(0;x)=\varphi(x)$). For this purpose, we begin by observing that
there exists $\delta_0>0$ such that $\zeta>y(\cdot;x)$ in $[s_0-\delta_0,s_0)$. Indeed, if this were not the case, there would exist a sequence $(s_n)$ converging to $s_0$ from the left such that $\zeta(s_n)\le y(s_n)$
for any $n\in\N$. Letting $n\to +\infty$ and taking \eqref{est-6} into account,
we would get to a contradiction.  Suppose that $\delta_0<s_0$. Then, there exists some $\overline s\in [0,s_0)$ such that $y(\overline s;x)\ge \zeta(\overline s)$ and
$y(\cdot;x)<\zeta$ in $(\overline s,s_0)$. As a consequence, $\int_{\overline s}^s|\zeta(\sigma)|^{1+\varepsilon}d\sigma
>\int_{\overline s}^s|y(\sigma;x)|^{1+\varepsilon}d\sigma$ for any $s\in (\overline s,s_0)$, and, using estimate \eqref{est_5} with $r_1=\overline s$, $r_2=s\in (\overline s,s_0)$, we  get
\begin{eqnarray*}
y(s;x)-\zeta(s)-(y(\overline s;x)-\zeta(s;x))\ge c_1\left (\int_{\overline s}^s|\zeta(r)|^{1+\varepsilon}dr-\int_{\overline s}^s|y(r;x)|^{1+\varepsilon}dr\right )>0,
\end{eqnarray*}
which, in its turn, imply that $\zeta<y(\cdot;x)$ in $(\overline s,s_0)$: a contradiction.
\end{proof}

\section{Gradient estimates}
\label{sect_graest}
This section is devoted to establish some uniform gradient estimates for the function $G_{\mathcal B}(t,s)f$.
More precisely, our aim consists in proving that, for any $T>s\in I$, there exists a positive constant $C_{s,T}$ such that
\begin{equation}\label{est_neu}
\|\nabla_x G_{\mathcal B}(t,s)f\|_{\infty} \le \frac{C_{s,T}}{\sqrt{t-s}}\|f\|_{\infty},\qquad\;\, t\in (s,T),
\end{equation}
for any $f\in C_b(\Omega)$. In the particular case when $C_{s,T}\le C(s)$ for some function $C$ bounded from above in any right-halfline
$J\subset I$, estimate \eqref{est_neu} allows us to conclude that, for any $\varepsilon>0$, there exists $C_{s,\varepsilon}'>0$ such that
\begin{equation}\label{est_neu-neu}
\|\nabla_x G_{\mathcal B}(t,s)f\|_{\infty} \le \frac{C'_{s,\varepsilon}}{\sqrt{t-s}}e^{-(c_0-\varepsilon)(t-s)}\|f\|_{\infty},
\qquad\;\, t\in (s,+\infty),
\end{equation}
for the same $f$'s as above. Indeed, in this case,
\begin{equation}
\|\nabla_xG_{\mathcal B}(t,r)f\|_{\infty}\le \frac{C(r)}{\sqrt{t-r}}\|f\|_{\infty}\le
\frac{C_s}{\sqrt{t-r}}\|f\|_{\infty},\qquad\;\,s\le r<t\le r+1,
\label{star-star}
\end{equation}
for any $f\in C_b(\Omega)$, where $C_s=\sup_{r>s}C(r)$.
Now, if $t>s+1$, we split $G_{\mathcal B}(t,s)f=G_{\mathcal B}(t,t-1)G_{\mathcal B}(t-1,s)f$ and use \eqref{est_neu} to estimate
\begin{align*}
\|\nabla_xG_{\mathcal B}(t,s)f\|_{\infty}\le &C_s\|G_{\mathcal B}(t-1,s)f\|_{\infty}
\le\tilde C_se^{-c_0(t-s)}\|f\|_{\infty},
\end{align*}
which, combined with \eqref{star-star}, yields to \eqref{est_neu-neu}.
\medskip

Throughout this section, besides Hypotheses \ref{hyp1}, \ref{hyp1-bc} and \ref{hyp-3} we will consider the following conditions on the domain $\Omega$ and the coefficients of the operators
$\A$ and ${\mathcal B}$. In particular, we assume that the boundary operator is independent of $t$.
\begin{hyp}
\label{hyp-reg-coeff}
\begin{enumerate}[\rm (i)]
\item
$\partial\Omega$ is uniformly of class $C^{3+\alpha}$;
\item
$q_{ij}, b_j, c\in C^{\alpha/2,1+\alpha}_{\rm loc}(\Omega_I)$ for some $\alpha\in (0,1)$
and any $i,j=1,\ldots,d$;
\item
there exist locally bounded from above functions $L_j, M_1:I\to\R$ $(j=1,\ldots,4)$, with $L_1$, $L_2$ nonnegative and $L_4<1/2$ in $I$, such that
\begin{equation}\label{der_q}
\hskip .5truecm
(a)~|\nabla_x q_{ij}(t,x)|\le M_1(t)\eta(t,x),\qquad\;\, (b)~|\nabla_x c(t,x)| \le L_1(t)+L_2(t)c(t,x),
\end{equation}
and
\begin{equation}\label{dissip_c}
\langle J_x b(t,x) \xi, \xi\rangle \le (L_3(t)+L_4(t)c(t,x))|\xi|^2,\qquad \;\, \xi\in\Rd,
\end{equation}
for any $(t,x)\in\Omega_I$;
\item
for any bounded interval $J\subset I$, there exists $\delta_0=\delta_0(J)$ such that $q_{ij}$, $b_j$ and $c$
belong to $C^{0,\alpha}_b(J\times\Omega_{\delta_0})$;
\item
either $(\beta, \gamma)\equiv (0,1)$ or $\beta\in C^{2+\alpha}_{\rm loc}(\partial \Om;\Rd)$ is bounded together with its derivatives, satisfies $\inf_{x \in \partial\Omega}\langle\beta(x),\nu(x)\rangle>0$ and $\gamma\in C_b(\partial\Omega)\cap C^{1+\alpha}_{\rm loc}(\partial\Omega)$.
\end{enumerate}
\end{hyp}

\begin{rmk}
\label{reg}
{\rm
Note that it is enough to prove estimate \eqref{est_neu} for functions $f\in C^3_c(\Om)$. Indeed, if $f\in C_b(\Om)$
we can find a sequence $(f_n)\in C^3_c(\Om)$ converging to $f$ locally uniformly in $\Om$ and such that $\|f_n\|_{\infty}\le\|f\|_{\infty}$ for any
$n\in\N$. By Proposition \ref{continuity}(i), $\nabla_xG_{\mathcal B}(\cdot,s)f_n$ converges pointwise to $\nabla_xG_{\mathcal B}(\cdot,s)f$
in $\Om_{(s,T)}$. Hence, from \eqref{est_neu}, with $f$ being replaced by $f_n$, we get
\begin{eqnarray*}
|(\nabla_xG_{\mathcal B}(t,s)f_n)(x)|\le \frac{C_{s,T}}{\sqrt{t-s}}\|f_n\|_{\infty}\le\frac{C_{s,T}}{\sqrt{t-s}}\|f\|_{\infty}.
\end{eqnarray*}
Letting $n\to +\infty$, we obtain \eqref{est_neu} for $f\in C_b(\Om)$.}
\end{rmk}

In view of this remark, we will prove \eqref{est_neu} for functions $f\in C^{3+\alpha}_c(\Omega)$.

\begin{thm}
\label{thm-grad-gen}
Under Hypotheses $\ref{hyp-reg-coeff}$, estimate \eqref{est_neu} holds true, with the constant $C_{s,T}$ depending on $d$, $\eta_0$,
$\|q_{ij}\|_{C^{0,\alpha}_b((s,T)\times\Omega_{\delta_0})}$, $\|b_j\|_{C^{0,\alpha}_b((s,T)\times\Omega_{\delta_0})}$ $(i,j=1,\ldots,d)$,
$\|c\|_{C^{0,\alpha}_b((s,T)\times\Omega_{\delta_0})}$, $\sup_{(s,T)}L_j$ $(j=1,2,3,4)$ and $\sup_{(s,T)}M_1$.
If all the functions $L_j$ $(j=1,2,3)$ and $M_1$ are bounded from above in $(s,+\infty)$ and $\sup_{(s,+\infty)}L_4<\frac{1}{2}$,
then estimate \eqref{est_neu-neu} holds true for any $\varepsilon>0$, and the constant therein appearing is independent of $s$
if in addition the functions $M_1$, $L_j$ $(j=1,2,3)$ are bounded from above in $I$, $\sup_IL_4<\frac{1}{2}$
and $q_{ij}$, $b_j$ $(i,j=1,\ldots,d)$ belong to $C^{0,\alpha}_b(I\times\Omega_{\delta_0})$.
\end{thm}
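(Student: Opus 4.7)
I would fix $f\in C^{3+\alpha}_c(\Omega)$, as justified by Remark \ref{reg}, and set $u=G_{\mathcal B}(\cdot,s)f$. The strategy is two-fold: first, flatten the boundary via local charts to reduce the gradient estimate on a strip $\Omega_\delta$ to a half-space Robin problem for which an estimate of the form \eqref{est_neu} is already available; then, close the argument by a global Bernstein computation on all of $\Omega$ that uses the strip estimate as boundary data. The factor $(t-s)^{-1/2}$ is produced by the time weight $a(t-s)$ appearing in the Bernstein function.

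\textbf{Step 1: the boundary strip.} By Remark \ref{local charts}, I have a locally finite atlas $\{(V_h,\psi_h)\}$ of a neighbourhood of $\partial\Omega$, with $C^{2+\alpha}$-norms uniform in $h$, sending $V_h\cap\Omega$ onto $B_1^+$ and $V_h\cap\partial\Omega$ onto $B_1\cap\partial\R^d_+$. I would invoke the geometric Lemma \ref{geometric} to recast the problem for $u\circ\psi_h^{-1}$ into a Cauchy problem on $B_1^+$ with a second-order operator $\widetilde{\mathcal A}_h$ and homogeneous Robin condition on $B_1\cap\partial\R^d_+$. Hypothesis \ref{hyp-reg-coeff}(iv)--(v) together with the equiboundedness of the charts give $C^{0,\alpha}$-bounds on the coefficients of $\widetilde{\mathcal A}_h$ uniform in $h$, so the half-space Robin estimate (obtained in the introduction by the substitution converting Robin into Neumann and then applying the convex Neumann Bernstein argument) applies uniformly. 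Pulling back through a partition of unity subordinate to $\{V_h\}$, I obtain $K_{s,T}>0$ such that
\begin{equation*}
\|\nabla_x u(t,\cdot)\|_{L^\infty(\Omega_\delta)}\le\frac{K_{s,T}}{\sqrt{t-s}}\|f\|_\infty,\qquad t\in(s,T).
\end{equation*}

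\textbf{Step 2: global Bernstein.} I would set $v(t,x):=u(t,x)^2+a(t-s)|\nabla_x u(t,x)|^2$ with $a>0$ to be chosen small. Differentiating $D_tu=\mathcal A u$, $(D_t-\mathcal A)v$ expresses itself as a quadratic form in $\nabla_x u$ and $D^2_x u$. The principal negative contributions $-2\langle Q\nabla_x u,\nabla_x u\rangle\le -2\eta_0|\nabla_x u|^2$ and $-2a(t-s)\sum_{i,j,k}q_{ij}D_{ik}u\,D_{jk}u\le -2a(t-s)\eta_0|D_x^2u|^2$ are used to absorb, via Young's inequality, the perturbations arising from $\nabla_x q_{ij}$, $J_xb$ and $\nabla_x c$ and bounded through \eqref{der_q} and \eqref{dissip_c}. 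The assumption $\sup_{(s,T)}L_4<1/2$ is what keeps the resulting quadratic form in $(|u|,|\nabla_x u|)$ coercive; choosing a threshold $a_0>0$ depending only on the quantities listed in the statement and taking $a\in(0,a_0]$ then yields $(D_t-\mathcal A)v\le 0$ in $\Omega_{(s,T)}$. Combining the initial datum $v(s,\cdot)\le\|f\|_\infty^2$ with the boundary bound $v\le(1+aK_{s,T}^2)\|f\|_\infty^2$ on $(s,T)\times\partial\Omega$ coming from Step 1, a variant of Proposition \ref{nsMP} applied to $v-(1+aK_{s,T}^2)\|f\|_\infty^2$ (using the Lyapunov function of Hypothesis \ref{hyp-3} to handle the unboundedness of $\Omega$) would give $v\le(1+aK_{s,T}^2)\|f\|_\infty^2$ in $\Omega_{(s,T)}$, which is \eqref{est_neu} with $C_{s,T}=a^{-1/2}(1+aK_{s,T}^2)^{1/2}$. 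The long-time refinement \eqref{est_neu-neu} and the $s$-independent constant under the stronger assumptions follow from the splitting $G_{\mathcal B}(t,s)=G_{\mathcal B}(t,t-1)G_{\mathcal B}(t-1,s)$ already carried out in the preamble of the statement.

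\textbf{Main obstacle.} The delicate point is the coefficient balancing in Step 2: the normalization $|\nabla_x q_{ij}|\le M_1\eta$ in \eqref{der_q}(a) is exactly what is needed to absorb the cubic-in-derivatives error $a(t-s)D_kq_{ij}D_{ij}u\,D_ku$ into the quadratic-in-$D_x^2u$ term, and the strict inequality $L_4<1/2$ is what prevents the $c$-dependent cross term from destroying coercivity. Step 1 is indispensable because, in contrast with the Dirichlet case (treated in \cite{ForMetPri04Gra} by a comparison argument) and the Neumann case on convex or generally curved domains (\cite{BerFor04Gra,BerForLor07Gra}), no direct boundary sign for $v$ is available when $\mathcal B$ is a general first-order operator with possibly sign-changing $\gamma$, so the supremum of $v$ on $\partial\Omega$ must be controlled externally via local flattening.
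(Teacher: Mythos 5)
Your overall architecture (strip estimate near $\partial\Omega$ by flattening, plus an interior argument, with Bernstein producing the $(t-s)^{-1/2}$ weight) is in the right spirit, but both halves contain genuine gaps. In Step 1, the half-space Robin estimate you invoke — obtained by the $e^{\Gamma}$ substitution converting Robin into Neumann and then the convex Neumann Bernstein argument — is not available under Hypotheses \ref{hyp-reg-coeff}: that route needs $\gamma\in C^{2+\alpha}_{\rm loc}$ and a function $\Gamma$ with bounded derivatives up to order three satisfying \eqref{baglioni-1} and \eqref{limit-alto} (Hypotheses \ref{hyp-10}), whereas here $\gamma$ is only assumed in $C_b(\partial\Omega)\cap C^{1+\alpha}_{\rm loc}(\partial\Omega)$, and the transformed Robin coefficient $\omega=\zeta\gamma(\phi_{x_0}^{-1})/\rho_{x_0}$ inherits no better regularity, so no admissible $\Gamma$ exists and the Bernstein computation (which differentiates the transformed coefficients and the boundary weight) cannot be run. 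Moreover, even granting a uniform half-space estimate, ``pulling back through a partition of unity'' does not transfer it: the localized function $\vartheta\,(G_{\mathcal B}(\cdot,s)f)\circ\phi_{x_0}^{-1}$ solves an \emph{inhomogeneous} problem whose forcing contains $\nabla_x u$ and $D^2_xu$ on a larger set with no smallness, and this is precisely the obstruction the paper's Step 1 is built to overcome: it uses the optimal Schauder estimate $\|G_{\mathcal R}(t,s)\psi\|_{2}\le C(t-s)^{-3/4}\|\psi\|_{1/2}$ for the localized Robin evolution operator with bounded coefficients, the variation-of-constants formula, interpolation, and an iteration over nested cutoffs $\vartheta_n$ (the recursion $a_n\le 8^nC(\varepsilon a_{n+1}+\varepsilon^{-3}\|f\|_{\infty})$ summed against a small geometric weight), which requires only the regularity actually assumed.

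In Step 2, running Bernstein globally on the unbounded $\Omega$ and concluding with ``a variant of Proposition \ref{nsMP}'' is circular: that maximum principle (and any Lyapunov-function variant of it) applies only to functions already known to be bounded — boundedness is what forces $v-n^{-1}\varphi\to-\infty$ as $|x|\to+\infty$ — and the boundedness of $a(t-s)|\nabla_xG_{\mathcal B}(t,s)f|^2$ is exactly what you are trying to prove; with unbounded coefficients there is no a priori growth control of $|\nabla_x u|$ at infinity, nor uniform continuity of the Bernstein function down to $t=s$. This is why the paper does not perform Bernstein on $\Omega$ itself: it multiplies $G_{\mathcal B}(\cdot,s)f$ by a cutoff supported away from $\partial\Omega$, views the result as a solution of an inhomogeneous problem in the whole $\Rd$ whose forcing is bounded by $C(t-s)^{-1/2}\|f\|_{\infty}$ thanks to the strip estimate and Hypothesis \ref{hyp-reg-coeff}(iv), and then combines the Duhamel formula with the gradient bound \eqref{stima-grad-spazio} for the auxiliary whole-space evolution operator; Bernstein enters only there, and it is justified via Neumann approximations on the balls $B_n$, where convexity gives the correct boundary sign and the \emph{classical} maximum principle on bounded domains applies before letting $n\to+\infty$. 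If you insist on your global scheme you must approximate $\Omega$ by bounded domains, and then you have no control of $\nabla_x u$ on the artificial boundaries (the strip estimate says nothing there), so the step as written fails; your algebraic use of \eqref{der_q}, \eqref{dissip_c} and of $L_4<1/2$ to keep the quadratic form coercive is, by contrast, consistent with the paper's computation for the auxiliary operator.
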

\begin{proof}
Fix $T>s\in I$ and $f \in C^3_c(\Rd)$. We split the proof into two steps.
In the first one, we prove a uniform gradient estimate for $G_{\mathcal B}(t,s)f$ near the boundary of $\Omega$. More precisely, we prove estimate \eqref{est_neu} with $\Omega$ being replaced by $\Omega_{\delta_1}$ for a suitable $\delta_1>0$.
Here, the smoothness of the domain suggests to go back, by means of local charts (and Lemma \ref{geometric}), to smooth bounded domains of $\Rd_+$
and to consider problems therein defined. In the second step, we prove an interior gradient estimate, i.e., we show that estimate
\eqref{est_neu} is satisfied with $\Omega$ being replaced by $\Omega\setminus\Omega_{\delta_1}$. Clearly, combining the results in Steps 1 and 2,
\eqref{est_neu} follows at once.

Throughout the proof, we denote by $C$ positive constants, which are independent of $n$ and $x_0\in\partial\Omega$, which may vary from line to line.

{\em Step 1.} We first consider the case when $\B$ is a first-order boundary operator.
We fix $0<\delta_1<\min\{\delta_0,r_0\}$, where $r_0$ is as in Lemma \ref{geometric} and $\delta_0=\delta_0((s,T))$ is given by
Hypothesis  \ref{hyp-reg-coeff}(iv), and prove estimate \eqref{est_neu}, with
$\Omega$ being replaced by $\Omega_{\delta_1}$.
Clearly, since $\bigcup_{x_0\in\partial\Omega}B_{\delta_1}(x_0)=\Omega_{\delta_1}$, it suffices to prove that there exists a positive constant
$K_{s,T}$, independent of $x_0$, such that
\begin{equation}\label{moscone}
\|\nabla_x G_{\mathcal B}(t,s)f\|_{L^\infty( \Om\cap B_{\delta_1}(x_0))}\le \frac{K_{s,T}}{\sqrt{t-s}}\|f\|_{\infty},
\qquad\;\, t\in (s,T),\;\, x_0\in\partial\Omega.
\end{equation}

Fix $x_0\in\partial\Omega$, $r_1\in (\delta_1,r_0)$ and  define
$R_n= 2\delta_1-r_1+(r_1-\delta_1)\sum_{k=0}^n 2^{-k}$ for any $n\in\N\cup\{0\}$.
Using Lemma \ref{lemma-approx}, we determine a sequence $(\vartheta_n)\subset C^{\infty}_c(\Rd)$ such that
$\chi_{\phi_{x_0}(B_{R_n}(x_0)\cap\Omega)}\le\vartheta_n\le \chi_{\phi_{x_0}(B_{R_{n+1}}(x_0)\cap\overline\Omega)}$ ($\phi_{x_0}$ is as in Lemma
\ref{geometric}), $D_d\vartheta_n\equiv 0$ on $\partial\Rd_+$ and
\begin{equation}
\|\vartheta_n\|_{C^k_b(\Rd)}\le \frac{2^{kn}C}{(r_1-\delta_1)^k},\qquad\;\,k=1,2,3.
\label{Dk-vartheta-1}
\end{equation}
Again by Lemma \ref{lemma-approx}, we fix a smooth function $\zeta$ such that $\chi_{\phi_{x_0}(B_{R_{n+1}}(x_0)\cap \Om)}\le\zeta\le
\chi_{\phi_{x_0}(B_{r_0}(x_0)\cap\Omega)}$. Since the support of the function
$w_n=\vartheta_nv_{x_0}:=\vartheta_n(G_{\mathcal B}(\cdot,s)f)(\phi_{x_0}^{-1})$ is contained in $\phi_{x_0}(B_{R_{n+1}}(x_0)\cap\Omega)$, a long but
straightforward computation reveals that $w_n$ solves the Cauchy problem
\begin{equation*}
\left\{
\begin{array}{ll}
D_t w_n(t,x)= (\hat{\A}w_n)(t,x)+ \hat{g}_n(t,x), \quad & (t,x) \in (s,T)\times \Rd_+,\\[1.5mm]
D_dw_n(t,x)+\omega(x)w_n(t,x)=0,\quad & (t,x)\in (s,T)\times\partial\Rd_+\\[1.5mm]
w_n(s,x)= \hat{f_n}(x), \quad & x \in \Rd_+,
\end{array}
\right.
\end{equation*}
where $\hat{\A}={\rm Tr}(\hat{Q}D^2)+\langle \hat{b},\nabla_x\rangle-\hat{c}$,
with $\hat{Q}= \zeta J\phi_{x_0}(\phi_{x_0}^{-1})Q(\cdot,\phi_{x_0}^{-1})(J\phi_{x_0}(\phi_{x_0}^{-1}))^T+(1-\zeta)I$,
$\hat{b}=\zeta[ (J\phi_{x_0}(\phi_{x_0}^{-1})b(\cdot,\phi_{x_0}^{-1}))_h+{\rm Tr}(Q(\cdot,\phi_{x_0}^{-1})D^2\phi^h_{x_0}(\phi_{x_0}^{-1}))]$
($h=1,\ldots,d$), $\hat{c}= \zeta c(\cdot,\phi_{x_0}^{-1})$,
$\omega=\zeta\gamma(\phi_{x_0}^{-1})/\rho_{x_0}(\phi_{x_0}^{-1})$ ($\rho_{x_0}$ is defined in \eqref{costarica}).
Finally, $\hat{g}_n=-2\langle \hat{Q}\nabla\vartheta_n, \nabla_xv_{x_0}\rangle - v_{x_0}(\hat{\A}+\hat{c})\vartheta_n$ and
$\hat{f}_n=\vartheta_n f(\phi_{x_0}^{-1})$, defined in the whole of $\Rd_+$.
Note that the coefficients of the operator $\hat\A$ and the function $\omega$ are smooth and bounded.

Denote by $G_{\mathcal R}(t,s)$ the evolution operator associated to $\hat{\A}$ in $C_b(\Rd_+)$ with homogeneous Robin boundary conditions.
Using the optimal Schauder estimates
$\|G_{\mathcal R}(t,s)\psi\|_2\le C(t-s)^{-\frac{3}{4}}\|\psi\|_{1/2}$, which holds for any $t \in (s, T]$
and $\psi \in C^{1/2}_b(\Rd_+)$
(where, from now on, we simply write $\|\cdot\|_{\beta}$ to denote the norm in $C^{\beta}_b(\Rd_+)$)
and the variation-of-constants formula, we can estimate
\begin{align}
&(t-s)\|D^2_xw_n(t,\cdot)\|_{\infty}\notag\\
\le & (t-s)\left [\|D^2_xG_{\mathcal R}(t,s)\hat{f_n}\|_{\infty}
+\left\|\int_s^t (D^2_xG_{\mathcal R}(t,r)\hat g_n(r,\cdot))(\cdot)\,dr\right\|_{\infty}\right ]\notag\\
\le & C\bigg\{\|f\|_{\infty}+ (t-s)\int_s^t (t-r)^{-\frac{3}{4}}\|\hat g_n(r,\cdot)\|_{\frac{1}{2}}dr\bigg\},
\label{gabriella}
\end{align}
for any $t \in (s, T]$. Since $w_{n+1}\equiv v_{x_0}$ in
$\phi_{x_0}(B_{R_{n+1}}(x_0)\cap \Omega)$ and $\hat g_n(r,\cdot)$ is
supported in $\phi(B_{R_{n+1}}(x_0)\cap \Omega)$, for any $r\in (s,T)$ we have
\begin{align*}
\|\hat g_n(r,\cdot)\|_{\frac{1}{2}}\le &C\|\vartheta_n\|_{C^3_b(\Rd)}\Big(\|\nabla_x w_{n+1}(r,\cdot)\|_{\frac{1}{2}}
+\|w_{n+1}(r,\cdot)\|_{\frac{1}{2}}\Big)\notag\\
\le&  8^{n}C\Big ((r-s)^{-\frac{3}{4}}\sup_{\sigma \in (s,T)}(\sigma-s)^{\frac{3}{4}}\|\nabla_x w_{n+1}(\sigma, \cdot)\|_{\frac{1}{2}}
+\|f\|_{\infty}\Big ),
\end{align*}
where the constant $C$ depends on $\|q_{ij}\|_{C^{0,\alpha}_b((s,T)\times\Omega_{\delta_0})}$ and $\|b_j\|_{C^{0,\alpha}_b((s,T)\times\Omega_{\delta_0})}$
($i,j=1,\ldots,d$).
Here, we have used the estimate $\|w_{n+1}(r,\cdot)\|_{\frac{1}{2}}\le 3\|w_{n+1}(r,\cdot)\|_{\infty}+\|\nabla_x w_{n+1}(r,\cdot)\|_{\frac{1}{2}}$
and \eqref{Dk-vartheta-1}. Thus, it follows that
\begin{align}\label{conto3}
(t-s)\int_s^t(t-r)^{-\frac{3}{4}}\|\hat g_n(r,\cdot)\|_{\frac{1}{2}}dr
\le 8^{n}C\Big (\sup_{\sigma \in (s,T)}(\sigma-s)^{\frac{3}{4}}\|\nabla_x w_{n+1}(\sigma, \cdot)\|_{\frac{1}{2}}\!+\!\|f\|_{\infty}\Big ),
\end{align}
for any $t\in (s,T)$.
Since $\|\nabla_xw_{n+1}(r,\cdot)\|_{\frac{1}{2}}\le C\|w_{n+1}(r,\cdot)\|_{\infty}^{\frac{1}{4}}\|D_x^2w_{n+1}(r,\cdot)\|_{\infty}^{\frac{3}{4}}$,
using estimate \eqref{stima-oper-B} and Young inequality, we deduce that
\begin{align}
\sup_{r \in (s,T)}(r-s)^{\frac{3}{4}}\|\nabla_x w_{n+1}(r,\cdot)\|_{\frac{1}{2}}
\le Ca_{n+1}^{\frac{3}{4}}\|w_{n+1}\|_{\infty}^{\frac{1}{4}}
\le \varepsilon a_{n+1}+C\varepsilon^{-3}\|f\|_{\infty},
\label{conto-3bis}
\end{align}
for any $\varepsilon >0$ and $n\in \N\cup\{0\}$, where $a_k:=\sup_{t \in (s, T)}(t-s)\|D^2 w_k(t,\cdot)\|_{\infty}$ for any $k\in\N\cup\{0\}$.
Now, replacing \eqref{conto3} and \eqref{conto-3bis} in \eqref{gabriella} we obtain
\begin{align}\label{primavera11}
a_n\le &8^nC\big (\varepsilon a_{n+1}+\varepsilon^{-3}\|f\|_{\infty}\big ),\qquad\;\,n\in\N\cup\{0\},\;\,\varepsilon>0.
\end{align}

The classical Schauder estimates in \cite[Thm. IV.10.1]{LadSolUra68Lin} and \eqref{stima-oper-B} show that
$\|v_{x_0}(r,\cdot)\|_{C^2(\phi(B_{r_1}(x_0)\cap \Om))}\le C\|f\|_{\infty}$, for any $r\in (s,T)$ where $C$ depends also on $\|c\|_{C^{0,\alpha}_b((s,T)\times\Omega_{\delta_0})}$. It thus follows that
$a_n \le 4^nC\|f\|_{\infty}$ for any $n \in \N\cup\{0\}$. We can
now choose $\varepsilon>0$ in \eqref{primavera11} such that
$\tau:=\varepsilon 8^nC< 2^{-9}$. Multiplying both the sides of \eqref{primavera11} by $\tau^n$ and summing over $n\in\N$, we
realize that the two series converge (in view of the above estimate on $a_n$) and, as a by product, we deduce that
\begin{equation}
(t-s)\|D^2_xw_0(t,\cdot)\|_{\infty}\le C\|f\|_{\infty},\qquad\;\,t\in (s,T).
\label{baglioni}
\end{equation}
Since $\|\nabla_xw_0(t,\cdot)\|_{\infty}\le C\|w_0(t,\cdot)\|_{\infty}^{1/2}\|D^2_xw_0(t,\cdot)\|_{\infty}^{1/2}$,
from \eqref{baglioni} it follows immediately that $\sqrt{t-s}\,\|\nabla_xw_0(r,\cdot)\|_{\infty}\le C\|f\|_{\infty}$ for any $t\in (s,T)$.
Recalling that $\vartheta_0\equiv 1$ in $\phi(B_{\delta_1}(x_0)\cap\Omega)$ we conclude that
$\sup_{t \in (s, T)}\sqrt{t-s}\,\|\nabla_x v_{x_0}(t,\cdot)\|_{\phi(B_{\delta_1}(x_0)\cap\Omega)}\le C\|f\|_{\infty}$.
Now, taking into account that $\nabla_x v_{x_0}= (J \phi_{x_0}^{-1})^T(\nabla_x G_{\mathcal B}(\cdot,s)f)(\phi_{x_0}^{-1})$, estimate \eqref{moscone}
follows at once.

In the case of homogeneous Dirichlet boundary conditions, the proof is completely similar. Actually, Lemma \ref{geometric} is not
needed here, since one can use the covering $\{\psi_h: h\in\N\}$ of $\partial\Omega$.

{\em Step 2.} We fix two functions $\vartheta_1, \vartheta_2 \in C^{\infty}(\Rd)$ such that
$\chi_{\Omega\setminus\Omega_{\delta_1}}\le\vartheta_1\le\chi_{\Om\setminus\Omega_{\delta_1/2}}$
and $\chi_{\Omega\setminus\Omega_{\delta_1/2}}\le\vartheta_2\le\chi_{\Om\setminus\Omega_{\delta_1/4}}$.
We denote by $v$ the trivial extension to the whole of $\Rd$ of the function
$\vartheta_1G_{\mathcal B}(\cdot,s)f$. As it is easily seen, the function $v$ solves the Cauchy problem
\begin{equation*}
\left\{
\begin{array}{ll}
D_tv(t,x)=(\tilde\A v)(t,x)+\psi(t,x), &t\in(s,+\infty),\,x\in\Rd,\\[1mm]
v(s,x)= \tilde f(x),& x\in \Rd,
\end{array}
\right.
\end{equation*}
where $\psi$ (resp. $\tilde f$) is the trivial extension to the whole of $(s,+\infty)\times\Rd$ (resp. $\Rd$) of the function
$\psi=-(G_{\mathcal B}(\cdot,s)f)(\A+c)\vartheta_1 -2 \langle Q\nabla_xG_{\mathcal B}(\cdot,s)f,\nabla \vartheta_1\rangle$
(resp. $\vartheta_1 f$) and $\tilde\A(t)={\rm Tr}(\tilde Q(t,\cdot)D^2)+\langle \tilde b(t,\cdot),\nabla_x\rangle+\tilde c(t,\cdot)$, where
$\tilde Q=\vartheta_2Q+(1-\vartheta_2)I$, $\tilde b=\vartheta_2 b$ and $\tilde c=\vartheta_2 c$.
Since the continuous function $\psi$ is supported in $\Omega_{\delta_1}$, in view of the boundedness assumptions on the diffusion
and drift coefficients, the definition of the function $\vartheta_1$ and Step 1,
\begin{equation}
\|\psi(t,\cdot)\|_{\infty}\le \frac{C}{\sqrt{t-s}}\|f\|_{\infty},\qquad\;\,t\in (s,T).
\label{luca-lunedi-0}
\end{equation}
Therefore, arguing as in Step 1, we can easily show that
\begin{equation}
\nabla_xv(t,x)=(\nabla_xG(t,s)\tilde f)(x)+\int_s^t(\nabla_xG(t,r)\psi(r,\cdot))(x)dr,
\label{luca-lunedi}
\end{equation}
for any $(t,x)\in (s,T)\times\Rd$, where $G(t,s)$ denotes the evolution operator
associated to the operator $\tilde{\mathcal A}$ in $C_b(\Rd)$ (see \cite{AngLor10Com}).

We claim that there exists a positive constant $C$, independent of $f$, such that
\begin{equation}
|(\nabla_xG(t,s)g)(x)|\le \frac{C}{\sqrt{t-s}}\|g\|_\infty,\qquad\;\,t\in (s,T),\;\,x\in\Rd,
\label{stima-grad-spazio}
\end{equation}
for any $g\in C_c(\Rd)$. Once this estimate is proved, from \eqref{luca-lunedi-0} and \eqref{luca-lunedi} it follows that
$\sqrt{t-s}\|\nabla_xv(t,\cdot)\|_{\infty}\le C\|f\|_{\infty}$
for any $t>s$, from which the gradient estimate for $G_{\mathcal B}(t,s)f$ in $\Omega\setminus\Omega_{\delta_1}$ follows immediately, recalling that
$v\equiv G_{\mathcal B}(\cdot,s)f$ in $(s,+\infty)\times (\Omega\setminus\Omega_{\delta_1})$.

To prove \eqref{stima-grad-spazio}, we fix $g\in C_c(\Rd)$ and, for any $n\in\N$ such that ${\rm supp}(g)\subset B_n$, we introduce
the evolution operator $G_n^N(t,s)$ associated to $\tilde\A$, with homogeneous Neumann boundary conditions, in $C_b(B_n)$.
By \cite[Thm. 2.3]{AngLor10Com}
$\nabla_x G_n^N(\cdot,s)g$ converges to $\nabla_x G(\cdot,s)g$ pointwise in $(s,T]\times \Rd$. Let $z_n\in C_b([s,T]\times\overline B_n)
\cap C^{1,2}((s,T)\times B_n)$ be the function defined by
$z_n(t,x):= (u(t,x))^2+a(t-s)|(\nabla_xu(t,x)|^2$ for any $(t,x)\in [s,T]\times B_n$,
where $u:=G_n^N(\cdot,s)g$ and the constant $a$ will be chosen later on. Since the matrix $J\nu$ is positive definite, the normal derivative
of $z_n$ is nonpositive on $\partial B_n$ (see the proof of Theorem \ref{caso_convesso} for further details). A simple computation shows that $z_n$ satisfies problem
\begin{align*}
\left\{
\begin{array}{ll}
D_tz_n(t,x)=(\tilde{\mathcal A}z_n)(t,x)+\psi_n(t,x),\qquad & t\in(s,T], x \in B_n\\[1mm]
\displaystyle\frac{\partial z_n}{\partial \nu}(t,x)\le 0,\qquad & t\in (s,T], x \in \partial B_n,\\[2mm]
z_n(s,x)=(g(x))^2,\qquad& x \in B_n,
\end{array}
\right.
\end{align*}
where
\begin{align}
\psi_n=& a|\nabla_x u|^2-\tilde c u^2
-2\langle \tilde Q\nabla_x u, \nabla_x u\rangle- a(\cdot-s)\tilde c|\nabla_x u|^2
-2a(\cdot-s){\rm Tr}(D^2_xu\tilde QD^2_xu)\notag\\
&+2a(\cdot-s)\sum_{i,j,k=1}^dD_k\tilde q_{ij}D_k uD_{ij}u
+2a(\cdot-s)[\langle J_x\tilde b\,\nabla_xu,\nabla_x u\rangle-u\langle \nabla_x \tilde c,\nabla_x u\rangle].
\label{ternoto}
\end{align}
Notice that the coefficients of the operator $\tilde{\A}(t)$ satisfy Hypothesis \ref{hyp-reg-coeff}(iii)
with the same values of $L_2$ and $L_4$ and with $M_1$, $\eta$, $L_1$ and $L_3$ being replaced, respectively, by
$\tilde M_1=M_1\chi_{\Om}+\max\{1,\eta_0^{-1}\}\|\nabla\vartheta_2\|_{\infty}(\|q_{ij}\|_{C_b((s,T)\times\Omega_{\delta_0})}+1)\chi_{\Om_{\delta_1}}$,
$\tilde\eta=\vartheta_2 \eta+1-\vartheta_2$, $\tilde L_1=\vartheta_2L_1+\|\nabla\vartheta_2\|_{\infty}\|c\|_{C_b((s,T)\times\Omega_{\delta_0})}$ and
$\tilde L_3=\vartheta_2 L_3+\|\nabla\vartheta_2\|_{\infty}\|b\|_{C_b((s,T)\times\Omega_{\delta_0})}$.
Therefore, we can estimate
\begin{equation}
\langle \tilde Q\nabla_x u,\nabla_x u\rangle\ge\tilde\eta|\nabla_xu|^2,\qquad\;\,
{\rm Tr}(D^2_xu(t,\cdot)\tilde Q(t,\cdot)D^2_xu(t,\cdot))\ge\tilde\eta |D^2_xu|^2,
\label{luca-1}
\end{equation}
\begin{equation}
\bigg |\sum_{i,j,k=1}^dD_k\tilde q_{ij}D_k u D_{ij}u\bigg |\le \tilde{M}_1d\tilde\eta|\nabla u||D^2_xu|,
\qquad |\langle\nabla_x \tilde c,\nabla_x u\rangle |\le (\tilde L_1+L_2\tilde c)|\nabla_xu|,
\label{luca-2}
\end{equation}
\begin{equation}
\langle J_x\tilde b\nabla_xu,\nabla_xu\rangle\le (\tilde L_3+L_4\tilde c)|\nabla_xu|^2,
\label{luca-4}
\end{equation}
in $\Omega_{(s,T)}$. Now, estimating
\begin{eqnarray*}
|\nabla_xu||D^2_xu|\le\varepsilon |D^2_xu|^2+\frac{1}{4\varepsilon}|\nabla_xu|^2,\qquad
|u||\nabla_xu|\le \varepsilon|\nabla_xu|^2+\frac{1}{4\varepsilon}u^2,
\end{eqnarray*}
for any $\varepsilon>0$, from \eqref{ternoto}-\eqref{luca-4} we deduce that
\begin{align*}
\psi_n\le & \frac{a}{2\varepsilon}(T-s)\overline{L}_1u^2
+\left (\frac{a}{2\varepsilon}(T-s)\overline{L}_2-1\right )\tilde cu^2\notag\\
&+\bigg [a+\left (a(T-s)\frac{\overline{M}_1d}{2\varepsilon}-2\right )\tilde\eta
+2a(T-s)\left (\overline{L}_3+\overline{L}_1\varepsilon\right )\bigg ]|\nabla_x u|^2\notag\\
&+a(T-s)\left (2\varepsilon \overline{L}_2+2\overline{L}_4-1\right )^+\tilde c|\nabla_xu|^2
+2a(T-s)\left (\overline{M}_1d\varepsilon-1\right )^+\tilde \eta|D^2_x u|^2,
\label{acqua_fonte-10}
\end{align*}
for any $\varepsilon>0$, where $\overline{L_{2j-1}}=\sup_{(s,T)}\tilde L_{2j-1}$,
$\overline{L_{2j}}=\sup_{(s,T)}L_{2j}$ ($j=1,2$) , $\overline{M_1}=\sup_{(s,T)}\tilde M_1$.
Thus, choosing $\varepsilon=\frac{1}{2}\min\left\{\frac{1-2\overline L_4}{\overline{L}_2},\frac{1}{\overline{M}_1d}\right\}$,
we can make nonpositive the coefficients in front of both $\tilde c|\nabla_xu|^2$ and $|D^2_xu_x|^2$.
Observing that the coefficients in front of $\tilde cu^2$ and $|\nabla_xu|^2$ tend, respectively, to
$-1$ and $-2\tilde\eta<-2\tilde\eta_0$, as $a\to 0^+$, we can then choose $a$ small enough such that
these coefficients are negative. With these choices of $\varepsilon$ and $a$, we deduce that
$\psi_n\le H_{s,T}u^2\le H_{s,T}z_n$ for any $n\in\N$ and some positive constant $H_{s,T}$,
depending on $\overline{L}_j$ $(j=1,2,3,4)$, $\overline{M}_j$ ($j=1,2$) $\eta_0$, $d$, $s$ and $T$.
By applying the classical maximum principle to the function $(t,x)\mapsto e^{-H_{s,T}(t-s)}z_n(t,x)$ we conclude that
 $e^{-H_{s,T}(t-s)}z_n \le \|g\|_{\infty}^2$, i.e.,
\begin{eqnarray*}
((G_n^N(t,s)g)(x))^2+a(t-s)|(\nabla_xG_n^N(t,s)g)(x)|^2\le C\|g\|_{\infty}^2,\qquad (t,x)\in [s,T]\times B_n.
\end{eqnarray*}
Letting $n \to +\infty$ we get \eqref{stima-grad-spazio}.
\end{proof}

In the following subsection, we consider the particular cases when the operator ${\mathcal A}$ is endowed with Neumann and Robin boundary conditions. In the first case we show that the boundedness assumptions on its coefficient in a neighborhood of $\partial\Omega$ and the additional smoothness condition on
$\Omega$ can be removed provided that $\Omega$ is convex.

\subsection{Neumann boundary conditions}

\begin{thm}\label{caso_convesso}
Let $\Om$ be a convex open set. Then, under Hypotheses $\ref{hyp-reg-coeff}(ii), (iii)$,
estimate \eqref{est_neu} holds true with the constant $C_{s,T}$ depending also on
$\sup_{(s,T)}M_1$, $\sup_{(s,T)}L_j$ $(j=1,2,3,4)$.
If the functions $L_j$ $(j=1,2,3)$ and $M_1$ are bounded from above in $(s,+\infty)$
and $\sup_{(s,+\infty)}L_4<\frac{1}{2}$, then estimate \eqref{est_neu-neu} holds true
for any $\varepsilon>0$, and the constant therein appearing is independent of $s$ if further
$L_j$ $(j=1,2,3)$, $M_1$ are bounded from above in $I$ and $\sup_{I}L_4<\frac{1}{2}$.
\end{thm}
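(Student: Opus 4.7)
The plan is a Bernstein-type argument adapted to the convex case of \cite{BerFor04Gra,BerForLor07Gra}. By Remark~\ref{reg} it suffices to prove \eqref{est_neu} for $f \in C^{3+\alpha}_c(\Omega)$. Setting $u=G_{\mathcal B}(\cdot,s)f$, my candidate barrier is
\begin{eqnarray*}
z(t,x)=(u(t,x))^{2}+a(t-s)|\nabla_x u(t,x)|^{2},\qquad (t,x)\in \overline{\Omega_{[s,T]}},
\end{eqnarray*}
with $a>0$ to be fixed. The goal is to establish (A) $D_t z-{\mathcal A}z\le H_{s,T}z$ in $\Omega_{(s,T)}$, (B) ${\mathcal B}z=\partial_\nu z\le 0$ on $(s,T)\times\partial\Omega$, together with $z(s,\cdot)\le \|f\|_\infty^2$, and then to conclude $z\le e^{H_{s,T}(t-s)}\|f\|_\infty^{2}$ via a maximum principle, which immediately yields \eqref{est_neu} with $C_{s,T}^2=e^{H_{s,T}(T-s)}/a$.

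Step (A) is the exact nonautonomous analog of the computation carried out in Step~2 of the proof of Theorem~\ref{thm-grad-gen}, between \eqref{ternoto} and~\eqref{luca-4}. The hypothesis $L_4<1/2$ lets one pick an auxiliary parameter $\varepsilon>0$ small enough to render the coefficients of $c|\nabla_x u|^2$ and of $|D^2_xu|^2$ in $D_t z-{\mathcal A}z$ nonpositive, and then $a>0$ small enough to make the coefficients of $cu^2$ and $|\nabla_xu|^2$ negative. With these choices, $H_{s,T}$ depends only on $d$, $\eta_0$, $\sup_{(s,T)} M_1$ and $\sup_{(s,T)} L_j$ ($j=1,\ldots,4$).

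Step (B) is where convexity enters. The homogeneous Neumann condition $\langle\nabla_xu,\nu\rangle\equiv 0$ on $\partial\Omega$ forces $\partial_\nu u^2=0$ and shows that $\nabla_xu(t,x_0)$ is tangent to $\partial\Omega$ for every $x_0\in\partial\Omega$. Differentiating $\langle\nabla_xu,\nu\rangle=0$ in the tangential direction $\tau:=\nabla_xu(t,x_0)$ and using the symmetry of $D^2_x u$ yields the classical identity
\begin{eqnarray*}
\langle D^2_x u(t,x_0)\,\nabla_xu(t,x_0),\nu(x_0)\rangle=-\langle J\nu(x_0)\nabla_xu(t,x_0),\nabla_xu(t,x_0)\rangle,
\end{eqnarray*}
and the convexity of $\Omega$ means that $\langle J\nu(x_0)\xi,\xi\rangle\ge 0$ for every $\xi$ tangent to $\partial\Omega$ at $x_0$ (see Remark~\ref{local charts}(b) with $\kappa\ge 0$). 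Hence $\partial_\nu|\nabla_xu|^2\le 0$, giving (B).

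The main obstacle is the rigorous application of the maximum principle, because $\A(t)$ has unbounded coefficients and no a priori bound on $\nabla_xu$ is available to control $z$ at infinity. I would handle this by an exhaustion of $\Omega$ by smooth convex bounded subdomains $\Omega_n\uparrow\Omega$ (for instance by intersecting $\Omega$ with $B_n$ and rounding off the resulting corners while preserving convexity), solving the Cauchy--Neumann problem on each $\Omega_n$ with the unchanged coefficients (now bounded on $\overline\Omega_n$), running steps (A)--(B) for the resulting classical solution $u_n\in C^{1+\alpha/2,2+\alpha}(\overline{(\Omega_n)_{(s,T)}})$ (step~(B) applies since $\Omega_n$ is itself convex), and obtaining $a(t-s)|\nabla_x u_n|^2\le e^{H_{s,T}(t-s)}\|f\|_\infty^2$ via the classical parabolic maximum principle on the compact cylinder $\overline{(\Omega_n)_{[s,T]}}$. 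Since the growth constants $L_j,M_1$ are intrinsic to the coefficients and not affected by shrinking the domain, $H_{s,T}$ is uniform in $n$. Passing to the limit through the interior Schauder estimates, the locally-uniform convergence of $u_n$ to $u$ (ensured by the uniqueness statements of Theorems~\ref{existence}--\ref{existence-bis}), then gives \eqref{est_neu}. The stronger estimate \eqref{est_neu-neu}, under the global bounds on the $L_j$ and $M_1$, follows as in the discussion surrounding \eqref{star-star}.
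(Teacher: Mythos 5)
Your proposal is correct and follows essentially the same route as the paper's proof: exhaustion of $\Omega$ by bounded smooth convex domains endowed with homogeneous Neumann conditions, the convexity-based sign $\partial_\nu|\nabla_x u_n|^2\le 0$ obtained from differentiating the Neumann condition tangentially, the Bernstein computation of Step 2 of Theorem \ref{thm-grad-gen} with the choice of $\varepsilon$ and $a$ dictated by $L_4<1/2$, the classical maximum principle on each bounded cylinder, and passage to the limit. The only cosmetic difference is that the paper justifies the convergence of the approximating solutions to $G_{\mathcal B}(\cdot,s)f$ by adapting the compactness-plus-initial-datum argument of Theorem \ref{existence} rather than appealing to uniqueness alone, but this is the same mechanism you invoke.
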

\begin{proof}
The proof is an adaption to the nonautonomous case of the gradient estimates in \cite{BerFor04Gra}.

Fix $T>s\in I$, $f\in C^3_c(\Omega)$ and an increasing sequence $(\Omega_n)$ of bounded, smooth convex sets such that
$\lim_{n\to +\infty}\Omega_n=\Omega$ and $\partial\Omega\cap\partial\Omega_n\neq\varnothing$ for any $n\in\N$.
Denote by $G_n^N(t,s)$ the evolution operator in $C_b(\Omega_n)$ associated with $\A$ with homogeneous Neumann
boundary conditions on $\partial\Omega_n$. Adapting the arguments in the proof of
Theorem \ref{existence}, we can easily prove that $G_n^N(\cdot,s)f$ converges to $G_{\mathcal B}(\cdot,s)f$
in $C^{1,2}(K)$ for any compact set $K\subset \overline{\Omega_{(s,+\infty)}}$. Since the normal derivative
of $G_n^N(t,s)f$ identically vanishes on $\partial\Omega_n$, each tangential derivative on $\partial\Omega$ of
$\frac{\partial}{\partial\nu}G_n^N(t,s)f$ vanishes. Therefore,
$\langle (D^2G_n^N(t,s)f)(x)\nu(x),\tau\rangle+\langle J\nu(x)\tau,(\nabla_xG_n^N(t,s)f)(x)\rangle=0$
for any vector $\tau$ tangent to $\partial\Omega$ at $x$, and any $x\in\partial\Omega$.
In particular, taking $\tau=(\nabla_x G_n^N(t,s)f)(x)$ and recalling that, since $\Omega_n$ is convex,
the quadratic form associated with the matrix $J\nu$ is everywhere nonnegative on $\partial\Omega_n$,
we conclude that $\langle (D^2G_n^N(t,s)f)(x)(\nabla_xG(t,s)f)(x), \nu(x)\rangle \le 0$ for any $x\in\partial\Omega_n$.
Therefore, the function $|\nabla_xG_n^N(t,s)f|^2$ has nonpositive normal derivative
on $\partial\Omega$.
As a byproduct, for any $n\in\N$ the function $z_n=|G_n^N(\cdot,s)f|^2+a(\cdot-s)|\nabla_xG_n^N(\cdot,s)f|^2$ has a nonpositive normal derivative
on $\partial\Omega_n$. We can now argue as in Step 2 of the proof of Theorem \ref{thm-grad-gen} and show that, for a suitable choice of the parameter
$a$, the function $D_tz_n-\A z_n$ is nonpositive in $(s,T)\times\Omega_n$. Hence, using the classical maximum principle and letting $n\to +\infty$,
we obtain estimate \eqref{est_neu}.
\end{proof}

As a consequence of Theorem \ref{caso_convesso} we can prove gradient estimates for solutions to problem \eqref{NAPC} in $\Rd_+$ when
${\mathcal A}$ is endowed with Robin boundary conditions, i.e., when ${\mathcal B}={\mathcal R}=\frac{\partial }{\partial\nu}+\gamma I$.
Besides Hypotheses \eqref{hyp-reg-coeff}(ii), (iii) we assume the following conditions:

\begin{hyp}
\label{hyp-10}
\begin{enumerate}[\rm (i)]
\item
the diffusion coefficients $q_{ij}$ belongs to $C^{0,1}_b(J\times\Rd_{+,\delta})$ for some $\delta>0$ and any bounded interval $J\subset I$;
\item
there exists a locally bounded function $L_5:I \to (0,+\infty)$ such that $|b|\le L_5(1+c)$ in $I\times\Rd_{+,\delta}$;
\item
$\gamma \in C^{2+\alpha}_{\rm loc}({\overline{\Rd_+}})$ and
there exist a constant $L_6$, a locally bounded from above function $L_7:I\to (0,+\infty)$ and a function $\Gamma\in C^{3+\alpha}_{\rm loc}(\overline{\Rd_+})$
such that $\Gamma$ is supported in $\overline{\Rd_{+,\delta}}$,
$D_d\Gamma\equiv\gamma$ on $\partial\Rd_+$,
\begin{equation}
\|\nabla\Gamma\|_{\infty}+\|D^2\Gamma\|_{\infty}+\|D^3\Gamma\|_{\infty}\le L_6,
\label{baglioni-1}
\end{equation}
\begin{equation}
\inf_{\Rd_{+,\delta}}\left [(\A+c)\Gamma-\langle Q\nabla\Gamma,\nabla\Gamma\rangle\right ]\ge -L_7,
\label{limit-alto}
\end{equation}
in $I$, where $\delta$ is as in $(i)$.
\end{enumerate}
\end{hyp}

\begin{rmk}
{\rm Sufficient conditions for Hypothesis \ref{hyp-10}(iii) hold are the following:
\begin{enumerate}[\rm (i)]
\item
the support of $\gamma$ is contained in a compact set $K\subset\R^{d-1}$. In this case we can take
$\Gamma(x)=\gamma(x')\vartheta(x_d)$ for any $x\in\Rd_+$,
where $\vartheta$ is a smooth nonnegative function supported in $[0,\delta]$ such that $\vartheta\le 1$ in $[0,\delta]$ and $\vartheta'(0)=1$;
\item
$\gamma(x)=\gamma_1(|x'|^2)$ for any $x\in\R^d_+$, where  $\gamma_1$ is a bounded and not increasing smooth
function such that $\gamma_0:=\sup_{t\ge 0}(1+t)^k(|\gamma_1'(t)|+|\gamma''_1(t)|)<+\infty$ for some  $k\in\N$. Further, there exists
a positive locally bounded function $L:I\to\R$ such that $|q_{ij}(t,x)|\le L(t)(1+|x'|^2)^{k-1}$,
 $|q_{id}(t,x)|\le L(t)(1+|x'|^2)^{k-1/2}$ ($i,j<d$) and $\langle b'(t,x),x'\rangle\le L(t)(1+|x'|^2)^k$ for any $(t,x)\in I\times\Rd_{+,\delta}$
and some $\delta>0$, where $b=(b',b_d)$.
 Finally, $q_{dd}$ and $b_d$ are bounded in $\Rd_{+,\delta}$. Indeed, in this case, with the choice of $\Gamma$ as in (i), we get
\begin{align*}
&((\A+c)\Gamma)-\langle Q\nabla\Gamma,\nabla\Gamma\rangle\\
\ge & -4d\gamma_0L-4(d-1)\gamma_0^2L-4\sqrt{d-1}(1+\|\gamma_1\|_{\infty})\gamma_0\|\vartheta'\|_{\infty}L\\
&-\|b_d\|_{\infty}\|\gamma_1\|_{\infty}\|\vartheta'\|_{\infty}-\|q_{dd}\|_{\infty}\|\gamma_1\|_{\infty}(\|\gamma_1\|_{\infty}\|\vartheta'\|_\infty^2+\|\vartheta''\|_{\infty}),
\end{align*}
in $I\times\Rd_{+,\delta}$. The local boundedness of the function $L$ yields \eqref{limit-alto}.
\end{enumerate}
}
\end{rmk}

\begin{thm}
Under Hypotheses $\ref{hyp-reg-coeff}(ii), (iii)$ and Hypotheses $\ref{hyp-10}$, estimate \eqref{est_neu} is satisfied
and the constant $C_{s,T}$ depends on $\eta_0$, $d$,
$\sup_{(s,T)}L_j$ $(j=1,\dots,7)$, $\sup_{(s,T)}M_1$, $\max_{1\le i,j\le d}\|q_{ij}\|_{C^{0,1}_b((s,T)\times\Rd_{+,\delta})}$.
Further, if the functions $L_j$ $(j=1,\ldots,7)$ are bounded from above in $(s,+\infty)$ and
$q_{ij}\in C^{0,1}_b((s,+\infty)\times\Rd_{+,\delta})$, estimate \eqref{est_neu-neu} holds true, and the constant therein appearing is independent of
$s$ if $L_j$ $(j=1,\ldots,7)$ are bounded from above in $I$ and
$q_{ij}\in C^{0,1}_b(I\times\Rd_{+,\delta})$.
\end{thm}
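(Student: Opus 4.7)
The plan is to reduce the Robin problem on the convex halfspace $\Rd_+$ to a Neumann problem on the same halfspace, and then invoke Theorem~\ref{caso_convesso}. The function $\Gamma$ of Hypothesis~\ref{hyp-10}(iii) is tailored precisely to do so via a gauge transformation. Writing $u=G_{\mathcal B}(\cdot,s)f$, I set $v(t,x):=e^{-\Gamma(x)}u(t,x)$. Since the outward unit normal to $\partial\Rd_+$ is $-e_d$ and $D_d\Gamma\equiv\gamma$ on $\partial\Rd_+$, a direct boundary computation shows that the Robin condition $-D_du+\gamma u=0$ on $\partial\Rd_+$ is equivalent to the Neumann condition $D_dv=0$ on $\partial\Rd_+$ for $v$.

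Substituting $u=e^\Gamma v$ into $D_tu=\mathcal{A}u$ and factoring out $e^\Gamma$, one checks that $v$ solves $D_tv=\tilde{\mathcal{A}}v$ with homogeneous Neumann condition on $\partial\Rd_+$, where
\begin{align*}
\tilde{\mathcal{A}}\zeta=\sum_{i,j}q_{ij}D_{ij}\zeta+\sum_i\tilde b_iD_i\zeta-\tilde c\,\zeta,\qquad\tilde b=b+2Q\nabla\Gamma,\qquad\tilde c=c-(\mathcal{A}+c)\Gamma-\langle Q\nabla\Gamma,\nabla\Gamma\rangle.
\end{align*}
Outside the strip $\Rd_{+,\delta}$, where $\Gamma\equiv 0$, one has $(\tilde b,\tilde c)=(b,c)$; inside the strip all corrections are controlled by the Lipschitz bound on $q_{ij}$ (Hypothesis~\ref{hyp-10}(i)), the growth bound $|b|\le L_5(1+c)$ (Hypothesis~\ref{hyp-10}(ii)), and the $C^3$-estimate \eqref{baglioni-1} on $\Gamma$. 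Note that $\Gamma$ is itself globally bounded, since it is compactly supported in the $x_d$-direction and has bounded gradient.

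The main work is to check that the transformed triple $(q_{ij},\tilde b,\tilde c)$ satisfies Hypotheses~\ref{hyp-reg-coeff}(ii) and (iii) on the convex set $\Rd_+$, so that Theorem~\ref{caso_convesso} can be applied to the problem for $v$. Smoothness (ii) is inherited from the original data together with $\Gamma\in C^{3+\alpha}_{\rm loc}$. Condition (iii)(a) on the diffusion is unchanged. For (iii)(b), $\nabla\tilde c-\nabla c$ depends on at most three derivatives of $\Gamma$, $\nabla Q$ and $\nabla b$, all of which are bounded on the strip by Hypothesis~\ref{hyp-10} and \eqref{dissip_c}. The dissipativity estimate for $\tilde b$ follows from the dissipativity of $b$ together with the uniform bound on $J_x(Q\nabla\Gamma)$ in the strip. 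Crucially, \eqref{limit-alto} is exactly the ingredient that keeps $\tilde c$ bounded from below uniformly in $c$: the combination $(\mathcal{A}+c)\Gamma-\langle Q\nabla\Gamma,\nabla\Gamma\rangle$ stays bounded below independently of the possibly unbounded $c$, so after reabsorption of the bounded remainder one has $\tilde c\ge-K$ on the whole of $\Rd_+$, and then Remark~\ref{rem-2.5} allows one to assume $\tilde c\ge 0$.

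Applying Theorem~\ref{caso_convesso} on $\Rd_+$ with Neumann boundary conditions yields $\|\nabla_x\tilde G(t,s)g\|_\infty\le\tilde C_{s,T}(t-s)^{-1/2}\|g\|_\infty$ for $g\in C_b(\Rd_+)$, where $\tilde G(t,s)$ is the evolution operator associated with $(\tilde{\mathcal{A}},\partial_\nu)$. Undoing the substitution via $G_{\mathcal B}(t,s)f=e^\Gamma\,\tilde G(t,s)(e^{-\Gamma}f)$ and differentiating in $x$ gives
\begin{align*}
\nabla_xG_{\mathcal B}(t,s)f=e^\Gamma(\nabla\Gamma)\tilde G(t,s)(e^{-\Gamma}f)+e^\Gamma\nabla_x\tilde G(t,s)(e^{-\Gamma}f),
\end{align*}
and since $e^{\pm\Gamma}$ and $|\nabla\Gamma|$ are uniformly bounded, estimate \eqref{est_neu} follows at once. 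The variants \eqref{est_neu-neu} and the $s$-independence of the constant follow as in Theorem~\ref{thm-grad-gen} by tracking how $L_1,\ldots,L_7$, $M_1$ and $\|q_{ij}\|_{C^{0,1}_b}$ propagate through the construction of $\tilde{\mathcal{A}}$. The main obstacle will be the simultaneous verification of Hypotheses~\ref{hyp-reg-coeff}(iii) for $\tilde{\mathcal{A}}$: absorbing the derivatives of $\Gamma$, $Q$ and $b$ into the new constants $\tilde L_j$ without spoiling either the lower bound on $\tilde c$ (the role of \eqref{limit-alto}) or the requirement $\tilde L_4<1/2$ on the modified drift.
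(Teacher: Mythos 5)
Your overall strategy is the same as the paper's: use the function $\Gamma$ of Hypothesis \ref{hyp-10}(iii) as a gauge, multiply the solution by $e^{\pm\Gamma}$ so that the Robin condition on $\partial\Rd_+$ becomes a homogeneous Neumann condition on the convex set $\Rd_+$, check that the transformed operator satisfies Hypotheses \ref{hyp-reg-coeff}(ii),(iii), and invoke Theorem \ref{caso_convesso}; the final step of undoing the gauge, using the boundedness of $e^{\pm\Gamma}$ and $\nabla\Gamma$, is also as in the paper. The difference is the sign of the gauge, and this is where your argument has a genuine gap. You set $v=e^{-\Gamma}u$, which indeed kills the boundary term if one reads Hypothesis \ref{hyp-10}(iii) literally ($D_d\Gamma\equiv\gamma$ on $\partial\Rd_+$, outward normal $-e_d$), but it produces $\tilde b=b+2Q\nabla\Gamma$ and $\tilde c=c-(\mathcal A+c)\Gamma-\langle Q\nabla\Gamma,\nabla\Gamma\rangle$. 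Condition \eqref{limit-alto} is a \emph{lower} bound on $(\mathcal A+c)\Gamma-\langle Q\nabla\Gamma,\nabla\Gamma\rangle$, hence it controls the opposite combination $c+(\mathcal A+c)\Gamma-\langle Q\nabla\Gamma,\nabla\Gamma\rangle\ge c-L_7$ (which is the paper's $\tilde c$, with $\tilde b=b-2Q\nabla\Gamma$), and it gives no information on yours. Concretely, your $\tilde c$ contains $-\langle b,\nabla\Gamma\rangle$, which by Hypothesis \ref{hyp-10}(ii) may be as negative as $-L_5L_6(1+c)$ on the strip where $c$ is unbounded; so your claim that \eqref{limit-alto} yields $\tilde c\ge -K$ ``uniformly in $c$'' is false, and with it the shift of Remark \ref{rem-2.5}, the bound $c\le\tilde c+{\rm const}$ needed to verify \eqref{der_q}(b) and \eqref{dissip_c} for the new coefficients, and hence the applicability of Theorem \ref{caso_convesso}, all break down.

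To be fair, the mismatch originates in a sign slip inside the paper's hypotheses: with the stated normalization $D_d\Gamma\equiv\gamma$ and the outward normal $-e_d$, the paper's own gauge $v=e^{\Gamma}G_{\mathcal R}(\cdot,s)f$ does not satisfy $D_dv=0$ either (one needs $D_d\Gamma=-\gamma$ there, equivalently to read the Robin operator with the inner normal). But \eqref{limit-alto} unambiguously selects the gauge $e^{+\Gamma}$ with $\tilde b=b-2Q\nabla\Gamma$, $\tilde c=c+(\mathcal A+c)\Gamma-\langle Q\nabla\Gamma,\nabla\Gamma\rangle$, which is what the paper uses and what makes all subsequent absorptions work; your proof should either adopt that gauge (adjusting the sign convention for $\Gamma$ on the boundary) or replace \eqref{limit-alto} by an \emph{upper} bound on $(\mathcal A+c)\Gamma+\langle Q\nabla\Gamma,\nabla\Gamma\rangle$, which is a different hypothesis. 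Two further, smaller omissions: you never verify Hypothesis \ref{hyp-3} for the transformed pair (the paper does so with $\tilde\varphi=e^{\Gamma}\varphi$), which is needed both for Theorem \ref{caso_convesso} and to identify, via uniqueness, the transformed evolution operator with $e^{\mp\Gamma}G_{\mathcal R}(t,s)(e^{\pm\Gamma}\cdot)$; and the verification of Hypotheses \ref{hyp-reg-coeff}(iii) for $(\tilde b,\tilde c)$, which you flag as ``the main obstacle,'' is precisely the computational core of the paper's proof and should be carried out rather than announced.
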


\begin{proof}
We limit ourselves to proving \eqref{est_neu} and observe that, for any $f\in C^3_c(\Rd_+)$,
the function $v=e^{\Gamma}G_{\mathcal R}(\cdot,s)f$ solves the Cauchy Neumann problem
associated with the operator $\tilde\A$, defined on smooth functions $\zeta$ by
$\tilde{\mathcal A}(t)\zeta= {\rm Tr}(QD^2\zeta)+\langle \tilde b, \nabla_x \zeta\rangle-\tilde c\psi$,
where $\tilde b=b-2 Q \nabla\Gamma$ and $\tilde c=c+({\mathcal A}+c)\Gamma-\langle Q\nabla\Gamma, \nabla\Gamma\rangle$.
Clearly, $G_{\mathcal R}(t,s)f$ satisfies the gradient estimate \eqref{est_neu} if and only if the function $v$ does.
Therefore, in view of Theorem \ref{caso_convesso}, we can limit ourselves to checking that the pair $(\tilde \A,D_d)$ satisfies
Hypotheses \ref{hyp1}(ii)-(iv), \ref{hyp-3} and \ref{hyp-reg-coeff}(ii), (iii). Hypotheses \ref{hyp1}(ii), \ref{hyp1}(iv) and
Hypothesis \ref{hyp-reg-coeff}(ii) are clearly satisfied as well as Hypotheses
\ref{hyp-3} with $\tilde\varphi=e^{\Gamma}\varphi$. Next, we note that due to \eqref{limit-alto}, there exists a positive constant
$M=M_{(s,T)}$ such that $\inf_{I\times \Rd_+}\tilde c\ge M$. Without loss of generality, we can assume that $M_{(s,T)}\ge 0$. Indeed,
as Remark \ref{rem-2.5} shows, we can always reduce to this situation, replacing $v$ by the function $w=e^{M(\cdot-s)}v$.
Finally, let us check Hypotheses \ref{hyp-reg-coeff}(iii).
Estimate \eqref{der_q}(a) is obvious. As far \eqref{der_q}(b) is concerned,
from Hypothesis \ref{hyp-10}(ii), recalling that the support of $\Gamma$ is contained in $\R^{d-1}\times [0,\delta]$ and observing that
$|J_xb|\le L_3+L_4c$, due to \eqref{dissip_c}, and $c\le\tilde c+L_7$, we get
\begin{align*}
|\nabla_x\tilde c|
\le & |\nabla_xc|+d\max_{1\le i,j\le d}\|\nabla_xq_{ij}\|_{C_b((s,T)\times\Rd_{+,\delta})}(\|\nabla\Gamma\|_{\infty}^2+\|D^2\Gamma\|_{\infty})\\
&+d\max_{1\le i,j\le d}\|q_{ij}\|_{C_b((s,T)\times\Rd_{+,\delta})}(\|D^3\Gamma\|_{\infty}
+2\|\nabla\Gamma\|_{\infty}\|D^2\Gamma\|_{\infty})\\
&+|J_xb|\|\nabla\Gamma\|_{\infty}+|b|\|D^2\Gamma\|_{\infty}\\
\le &L_1+L_2c+L_6(L_8d+L_3+L_5+2dL_6L_8)+L_6(L_4+L_5)c\\
\le &L_1+L_6(L_8d+L_3+L_5+2dL_6L_8)+[L_6(L_4+L_5)+L_2]^+L_7\\
&+[L_6(L_4+L_5)+L_2]^+\tilde c,
\end{align*}
in $I\times\Rd_+$, where $L_8=\max_{1\le i,j\le d}\|q_{ij}\|_{C^{0,1}_b((s,T)\times\Rd_{+,\delta})}$.
Similarly, taking Hypothesis \ref{hyp-10}(i) and condition \eqref{baglioni-1} into account we deduce that
\begin{align*}
\langle J_x \tilde b\,\xi, \xi\rangle
\le & (L_3+2dL_6L_8+L_4^+L_7+L_4^+\tilde c)|\xi|^2,
\end{align*}
in $I\times\Rd_{+,\delta}$ and for any $\xi\in\Rd$.
Hence, condition \eqref{dissip_c} is satisfied with $L_4^+$ replacing $L_4$ and a different function $L_3$.
\end{proof}

\section{Examples}
\label{sect_ex}
In this section we provide some class of operators $\A$ which fulfill our assumptions. We confine ourselves to the relevant cases when $\Omega=\Rd_+$ and when $\Omega$ is an exterior domains. In what follows $I$ denotes a right halfline (possibly $I=\R$), $J$ any bounded interval contained in $I$
and $\alpha\in (0,1)$.

\begin{example}
{\rm Let $\A$ and $\B$ be, respectively, the elliptic operator, defined by
\begin{equation}
{\mathcal A}(t)=\omega(t)(1+|x|^2)^r\Delta_x+\langle b(t,x),\nabla_x \rangle-\hat c(t,x)(1+|x|^2)^m,
\label{oper-ex}
\end{equation}
for any $(t,x)\in I\times\Rd_+$, and the operator in \eqref{OpB}.
The coefficient $\omega$ belongs to $C^{\alpha/2}_{\rm loc}(I)$, the entries of the vector $b$ and the function $\hat c$ belong to $C^{\alpha/2,\alpha}_{\rm loc}(I\times\Rd_+)$. Moreover, $\inf_I\omega>0$ and $\inf_{I\times\Rd_+}\hat c=c_0>0$. Finally,  there exist $R>0$, $p\in [0,+\infty)$, such that $(r-1)^+<\max\{p,m\}$, and a function $k_1:I\to\R$ with positive infimum over any $J\subset I$, such that
$\langle b(t,x),x\rangle\le-k_1(t)(1+|x|^2)^{p}|x|^2$ for any
$(t,x)\in I\times\big(\R^d_+\setminus B_R\big)$. As far as the coefficients of the operator $\B$ are concerned, we assume that $\beta_i, \gamma\in C^{(1+\alpha)/2,1+\alpha}_{\rm loc}(\overline{I} \times\R^{d-1})$ $(i=1,\ldots,d)$, $\sum_{i=1}^d\beta_i^2=1$, $\gamma\ge 0$ in $I\times\R^{d-1}$ and $\inf_{(t,x')\in I\times \R^{d-1}}\beta_d(t,x')>0$. Moreover, we assume that $\langle\beta'(t,x'),x'\rangle +\gamma(t,x')(1+|x'|^2)\ge 0$
for any $(t,x')\in I\times\R^{d-1}$.

Under the previous set of assumptions, the function $\varphi$, defined by $\varphi(x)=1+|x|^2$ for any $x\in\overline{\Rd_+}$, satisfies the estimates
\begin{align*}
(\A(t)\varphi)(x)\le 2d\omega_J(1+|x|^2)^r-2k_{1,J}(1+|x|^2)^p|x|^2-\hat c_0(1+|x|^2)^{m+1},
\end{align*}
for any $J\subset I$, $t\in J$ and $x\in\Rd_+\setminus B_R$, where $\omega_J=\sup_{J}\omega$, $k_{1,J}=\inf_Jk_1$.
Moreover, $(\B(t)\varphi)(x',0)= 2\langle \beta'(t,x'),x'\rangle +\gamma(t,x')(1+|x'|^2)$ for any $t\in J$ and any $x'\in\R^{d-1}$.
The assumptions on $m, p, r$ and on $\beta'$ and $\gamma$ show that Hypotheses \ref{hyp-3} hold true. Moreover, the assumptions of
Theorem \ref{thm-comp} are satisfied as well, with $\psi=\varphi$ and $\varepsilon=\max\{p,m\}$. Hence,
the operator $G_{\mathcal B}(t,s)$, associated with the operator $\A$ in \eqref{oper-ex}, is compact for any $(t,s)\in\Lambda$.

In the particular case when $\B=\frac{\partial}{\partial\nu}$ (i.e., Neumann boundary conditions are prescribed), $b(t,x)=-b_0(t)x(1+|x|^2)^p$ for some positive function $b_0\in C^{\alpha/2}_{\rm loc}(I)\cap C_b(I)$, $\hat c\in C^{0,1}_b(I\times \Rd_+)$,
the assumptions of Theorem \ref{caso_convesso} are satisfied with $\eta(t,x)=\omega(t)(1+|x|^2)^r$, $M_1=r$, $L_1\equiv L_3\equiv L_4\equiv 0$, $L_2=m+c_0^{-1}\|\nabla_x \hat c\|_{\infty}$. Hence, the gradient estimate \eqref{est_neu} holds true. If the coefficients $\omega$, $b_i$ ($i=1,\ldots,d$) and $\hat c$ belong to $C_b(I)$ and to $C^{0,1}_b(I\times\Rd_+)$, respectively, then the estimate \eqref{est_neu-neu} holds true as well.}
\end{example}

\begin{example}
\label{exa-Rd}
{\rm
Let
\begin{equation}
\mathcal A(t)=(1+x_d^2)^r{\rm Tr}(\hat Q(t,x)D^2_x)+(1+x_d^2)^p\langle \hat b(t,x),\nabla_x\rangle-\hat c(t,x)(1+x_d^2)^m,
\label{oper-ex-ex}
\end{equation}
for any $(t,x)\in I\times\Rd_+$, where $m,p,r$ are nonnegative constants such that $r<\max\{p+1,m+1\}$. The function $\hat c$ and the entries of $\hat Q$ and $\hat b$
belong to $C^{0,1}_b(J\times\Rd_+)\cap C^{\alpha/2,1+\alpha}_{\rm loc}(I\times\overline{\Rd_+})$ for any $J\subset I$
and $0<c_0(t):=\inf_{\Rd_+}\hat c(t,\cdot)$, the function $c_0$
being locally bounded from below by a positive constant. Moreover,
$\langle \hat Q(t,x)\xi,\xi\rangle \ge \eta_0|\xi|^2$ for any $(t,x)\in I\times\Rd_+$, $\langle J_x\hat b(t,x)\xi,\xi\rangle\le -\sigma_0(t)|\xi|^2$, $\langle \hat b(t,x),x\rangle\le -\sigma_0(t)|x|^2$ for any $(t,x)\in I\times\Rd_+$, any $J \subset I$, $\xi\in\Rd$ and some continuous function $\sigma_0:I\to (0,+\infty)$. Finally,
the coefficients of the operator $\B$ in \eqref{OpB} satisfy
$\beta\in C^{2+\alpha}_{\rm loc}(\R^{d-1})$, $\gamma\in C^{1+\alpha}_{\rm loc}(\R^{d-1})$, $\sum_{i=1}^d\beta_i^2\equiv 1$, $\gamma\ge 0$ in $\R^{d-1}$, $\inf_{\R^{d-1}}\beta_d>0$ and $\langle\beta'(x'),x'\rangle+\gamma(x')(k^2+|x'|^2)\ge 0$ for any $x'\in\R^{d-1}$ and some positive constant $k\ge 1$.

Under this set of assumptions, the function $\varphi$, defined by $\varphi(x)=\sqrt{k^2+|x|^2}$ for any $x\in\Rd_+$, satisfies Hypothesis \ref{hyp-3}. Indeed,
\begin{align*}
\frac{(\A(t)\varphi)(x)}{\varphi(x)}
\le \sqrt{d}\|\hat Q\|_{C_b(J\times\Rd_+;\R^{d^2})}(1+x_d^2)^{r-1}\!\!-\!\frac{\sigma_0(t)|x|^2}{k^2+|x|^2}(1+x_d^2)^p\!\!-\!\!c_0(t)(1+x_d^2)^m\!,
\end{align*}
for any $(t,x)\in J\times\Rd_+$ and any $J\subset I$. The choice of $m,p,r$ shows that the right-hand side of the previous inequality tends to $-\infty$ as $x_d\to +\infty$, uniformly with respect to $t\in J$. Hence, $(\A\varphi)/\varphi$ is bounded in $J\times\Rd_+$.
Since, clearly, $\B\varphi\ge 0$ on $\partial \Rd_+$, the function $\varphi$ satisfies Hypotheses \ref{hyp-3}.
Similarly, Hypothesis \ref{hyp-reg-coeff}(iii) is satisfied with $\eta(t,x)=\eta_0(1+x_d^2)^r$, $M_1(t)=\eta_0^{-1}(r\|\hat q_{ij}(t,\cdot)\|_{\infty}+\|\nabla_x \hat q_{ij}(t,\cdot)\|_{\infty})$,
$L_1\equiv L_4\equiv 0$, $L_2(t)=m+(c_0(t))^{-1}\|\nabla_x \hat c(t,\cdot)\|_{\infty}$, $L_3(t)=-\sigma_0(t)+2p\|\hat b(t,\cdot)\|_{\infty}$ if $p \le 1/2$ and $L_3(t)=\max\{(2p-1)^{2p-1}\|\hat b(t,\cdot)\|_\infty^{2p}(\sigma_0(t))^{1-2p},-\sigma_0(t)+2p\|\hat b(t,\cdot)\|_{\infty}\}$, otherwise. Hence, the gradient estimate \eqref{est_neu} holds true.
If $c_0$ and $\sigma_0$ are bounded from below in $I$ by a positive constant, $\hat b_j, D_i\hat c\in C_b(I\times\Rd_+)$ and $q_{ij}\in C^{0,1}_b(I\times\Rd_+)$ ($i,j=1,\ldots,d$), then estimate \eqref{est_neu-neu} holds true as well.}
\end{example}

We now consider the case when $\Omega\subset \Rd$ is an exterior domain.

\begin{example}
\label{example-ext-1}
{\rm  Assume that $\Omega$ has a boundary uniformly of class $C^{2+\alpha}$.
Let ${\mathcal A}$, ${\mathcal B}$ be the operators in \eqref{operator} and \eqref{OpB}. Assume that Hypotheses \ref{hyp1}(ii)-(iv) and Hypotheses \ref{hyp1-bc} are satisfied. Assume that
\begin{equation}
\ell_J:=\sup_{(t,x)\in J\times (\Rd_+\setminus B_1)}\frac{{\rm Tr}(Q(t,x))+\langle b(t,x),x\rangle}{|x|^2}<+\infty,\qquad\;\,t\in J,
\label{carboni}
\end{equation}
for any $J\subset I$. For instance, condition \eqref{carboni} is satisfied when $Q$ is bounded in $\Om_I$ and $\langle b(t,x),x\rangle$ grows at infinity at most quadratically, uniformly with respect to $t\in J$ for any bounded interval $J \subset I$.

If $\gamma_0:=\inf_{\Omega_I}\gamma\ge 0$, under the previous assumptions, the function $\varphi:\R^d\to\R$, defined by $\varphi(x)=(1-r_{\Om}(x))\vartheta(x)+(1-\vartheta(x))(1+|x|^2)$ for any $x\in\R^d$, satisfies Hypotheses \ref{hyp-3}. Here, $\vartheta$ is
any smooth function with ${\rm supp}(\vartheta)\subset\Omega_{\delta}$ and $\vartheta\equiv 1$ in $\Omega_{\delta/2}$, where
$\delta$ is defined in Remark \ref{local charts}(b). If, further,
$c(t,x)= \hat{c}(t,x)(1+|x|^2)^m$ for any $(t,x)\in\Omega_I$, some $m>1$ and some smooth enough and bounded function $\hat c$ with positive infimum on $\Om_I$, then the function $\psi:\Om\to \R$ defined by $\psi(x)=1+|x|^2$, $x \in \Om$, satisfies the assumption of Theorem \ref{thm-comp} with $\varepsilon=m$. Hence,
the evolution operator $G_{\mathcal B}(t,s)$ is compact.

Let us now assume that $\gamma_0<0$.
Fix a function $\zeta\in C^{2+\alpha}_b([0,+\infty))$ with positive infimum, such that $\zeta(0)=1$, $\zeta\equiv 1/2$ in $[\delta/2,+\infty)$ and $\zeta'(0)<0$.
Further, let $\sigma$ be a constant greater than $\max\left\{1,\frac{\gamma_0}{\beta_0\zeta'(0)}\right\}$,
where $\delta$ is as above.
Then, the function $\Phi$, defined by $\Phi(x)=\zeta(\sigma r_{\Omega}(x))$ for any $x\in\Om$, belongs to $C^{2+\alpha}_b(\Om)$
and has positive infimum. Moreover,
$({\mathcal B(t)}\Phi)(x)\ge -\sigma\zeta'(0)\beta_0+\gamma_0> 0$ in $I\times\partial\Omega$, due to the choice of $\sigma$.
Moreover, since $\Phi$ is constant outside a neighborhood of $\partial\Omega$, $(\A\Phi)/\Phi$ is bounded in $J\times\Omega$ for any
$J\subset I$, i.e., Hypothesis \ref{ipofisi} is satisfied.

Finally, analogous computations as above show that the function $\varphi$, defined by $\varphi(x)= \zeta(\sigma r_\Om(x))+ (1-\zeta(\sigma r_{\Omega}(x))|x|^2$ for any $x\in\Omega$, satisfies Hypotheses \ref{hyp-3}. Therefore, the results in Theorem \ref{existence-bis} can be applied.}
\end{example}

\begin{example}
{\rm
Assume that $\partial \Om$ is uniformly of class $C^{3+\alpha}$. Let $\A$ and $\B$ be as in Example \ref{exa-Rd} with $x_d^2$ being replaced
by $|x|^2$. The two functions  $\varphi$, introduced in Example \ref{example-ext-1}, satisfy Hypotheses \ref{hyp-3}. Moreover, arguing as for the operator $\A$ in \eqref{oper-ex-ex}, it can be easily shown that Hypotheses \ref{hyp-reg-coeff} are satisfied.}
\end{example}

\appendix
\section{Technical results}
\label{sect_app}

\begin{thm}\label{appendicite}
Let $\Omega$ be an unbounded domain with a boundary uniformly of class $C^{2+\alpha}$ for some $\alpha \in (0,1)$. Let $\mathcal{A}$
be the uniformly nonautonomous elliptic operator defined by \eqref{operator}, with coefficients in $C^{\alpha}([a,b], C_b(\overline{\Omega}))$
and let ${\mathcal B}=I$ or ${\mathcal B}=\langle \beta,\nabla\rangle+\gamma$ where $\beta_i, \gamma\in C^{\sigma}([a,b];C_b^1(\overline\Omega))$ for
some $\sigma>1/2$ and any $(i=1,\ldots,d)$. Then,
\begin{equation}
(G_{\mathcal B}(t,s_2)f)(x)-(G_{\mathcal B}(t,s_1)f)(x)= -\int_{s_1}^{s_2}(G_{\mathcal B}(t,r)\mathcal{A}(r)f)(x)\, dr,
\label{bietola_appen}
\end{equation}
for any $s_1, s_2, t\in [a,b]$, with $t\ge\max\{s_1,s_2\}$, any $x \in \Om$ and any $f\in C^2_c(\Omega)$.
\end{thm}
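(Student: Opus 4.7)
The plan is to establish the pointwise identity
\[
D_r (G_{\mathcal B}(t,r)f)(x) = -(G_{\mathcal B}(t,r)\mathcal{A}(r)f)(x),\qquad r\in [a,t],
\]
for fixed $t\in [a,b]$ and $x\in\Omega$, and then to deduce \eqref{bietola_appen} from the fundamental theorem of calculus. Since $f\in C^2_c(\Omega)$ has compact support strictly inside $\Omega$, the compatibility condition $\mathcal{B}f\equiv 0$ on $\partial\Omega$ holds trivially; this, together with the boundedness and H\"older regularity in time of the coefficients of $\mathcal{A}$ and $\mathcal{B}$, allows us to invoke the global Schauder theory of \cite{LadSolUra68Lin} to conclude that, for each $r\in [a,b]$, the function $u_r(\tau,y):=(G_{\mathcal B}(\tau,r)f)(y)$ belongs to $C^{1+\alpha/2,2+\alpha}([r,b]\times\overline\Omega)$, solves $D_\tau u_r=\mathcal{A}(\tau)u_r$ in $[r,b]\times\Omega$, satisfies $\mathcal{B}u_r\equiv 0$ on $[r,b]\times\partial\Omega$ and $u_r(r,\cdot)=f$. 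In particular, $\tau\mapsto\mathcal{A}(\tau)u_r(\tau,\cdot)$ is continuous from $[r,b]$ into $C_b(\overline\Omega)$ and equals $\mathcal{A}(r)f$ at $\tau=r$.

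Using the evolution law $G_{\mathcal B}(t,r)=G_{\mathcal B}(t,r+h)G_{\mathcal B}(r+h,r)$, valid for $r+h\leq t$, for $h>0$ I would compute
\begin{align*}
(G_{\mathcal B}(t,r+h)f)(x)-(G_{\mathcal B}(t,r)f)(x)
&= -\bigl(G_{\mathcal B}(t,r+h)[G_{\mathcal B}(r+h,r)f-f]\bigr)(x)\\
&= -\biggl(G_{\mathcal B}(t,r+h)\int_r^{r+h}\mathcal{A}(\tau)u_r(\tau,\cdot)\,d\tau\biggr)(x),
\end{align*}
where the last equality follows by integrating $D_\tau u_r=\mathcal{A}(\tau)u_r$ over $[r,r+h]$; the integrand lies in $C_b(\overline\Omega)$ uniformly in $\tau$, so $G_{\mathcal B}(t,r+h)$ can be applied to it through the representation formula \eqref{repres-G}. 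Dividing by $h$ and letting $h\to 0^+$ gives the right-derivative at $r$: the integrand converges uniformly to $\mathcal{A}(r)f$ on $\overline\Omega$, and the continuity $s\mapsto (G_{\mathcal B}(t,s)g)(x)$ at $s=r$ for fixed $g\in C_b(\overline\Omega)$ (which itself follows from the same evolution-law argument applied to a smooth auxiliary datum and from Proposition \ref{continuity}) yields the limit $-(G_{\mathcal B}(t,r)\mathcal{A}(r)f)(x)$.

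The left derivative at $r$ is handled symmetrically, starting from
\[
(G_{\mathcal B}(t,r-h)f)(x)-(G_{\mathcal B}(t,r)f)(x) = \biggl(G_{\mathcal B}(t,r)\int_{r-h}^r\mathcal{A}(\tau)u_{r-h}(\tau,\cdot)\,d\tau\biggr)(x),
\]
together with the fact that $\mathcal{A}(\tau)u_{r-h}(\tau,\cdot)\to\mathcal{A}(r)f$ uniformly in $\overline\Omega$ as $h\to 0^+$ (uniformly in $\tau\in[r-h,r]$), which is again a consequence of the global Schauder estimates and the time-H\"older regularity of the coefficients. Since the two one-sided derivatives coincide and the resulting map $r\mapsto (G_{\mathcal B}(t,r)\mathcal{A}(r)f)(x)$ is continuous on $[a,t]$, \eqref{bietola_appen} follows at once, in both cases $s_1\leq s_2$ and $s_2\leq s_1$.

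The main obstacle is the global-in-space, up-to-the-initial-time Schauder regularity used to control $u_r$ near $\tau=r$: one needs a uniform $C^{1+\alpha/2,2+\alpha}$ bound on $u_r$, including control at $\partial\Omega$, to justify the continuity of $\tau\mapsto\mathcal{A}(\tau)u_r(\tau,\cdot)$ up to $\tau=r$ and the interchange of limit and integral. The compact support and $C^2$ regularity of $f$, combined with the bounded, time-H\"older coefficient assumptions in the statement, are precisely what the classical initial-boundary-value Schauder theory requires, so these ingredients can be imported directly from \cite{LadSolUra68Lin}.
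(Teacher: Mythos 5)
There is a genuine gap: your argument hinges on the claim that, for $f\in C^2_c(\Omega)$, the function $u_r=G_{\mathcal B}(\cdot,r)f$ belongs to $C^{1+\alpha/2,2+\alpha}([r,b]\times\overline\Omega)$ and that $\mathcal{A}(\tau)u_r(\tau,\cdot)\to\mathcal{A}(r)f$ uniformly on $\overline\Omega$ as $\tau\to r^+$ (uniformly in the initial time for the left derivative), and you propose to import this from the global Schauder theory of \cite{LadSolUra68Lin}. That theory is not applicable here, for two reasons. First, the theorem only assumes the coefficients of $\mathcal{A}$ lie in $C^{\alpha}([a,b],C_b(\overline\Omega))$, i.e.\ they are H\"older in time but merely bounded and continuous in space; classical parabolic Schauder estimates require spatial H\"older continuity, which is exactly what is missing, and this is why the paper works instead with the realizations $A(t)$ in $C_b(\overline\Omega)$ with $W^{2,p}_{\rm loc}$ domains and the Stewart-type a priori estimate \eqref{apriori} from \cite{Ste80Gen}. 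Second, even if the coefficients were smooth in space, up-to-the-initial-time $C^{1+\alpha/2,2+\alpha}$ regularity needs the datum in $C^{2+\alpha}$ (plus compatibility), while $f$ is only $C^2_c$; this particular point could be repaired by approximating $f$ in $C^2$ by functions in $C^{2+\alpha}_c(\Omega)$ supported in a fixed compact set, but the first obstruction cannot.

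The deeper issue is that the differentiability of $r\mapsto G_{\mathcal B}(t,r)f$ with the formula $D_rG_{\mathcal B}(t,r)f=-G_{\mathcal B}(t,r)\mathcal{A}(r)f$ for data in the (time-dependent) domains is precisely the delicate content of the abstract nonautonomous theory, not a consequence of soft manipulations with the evolution law: since the boundary operator $\mathcal{B}(t)$ varies with $t$, the domains $D(A(t))$ vary, and one needs quantitative control on how the resolvents depend on $t$. This is the route the paper takes: it verifies the hypotheses of \cite[Thms. 2.3(ix) \& 6.3]{Acq88Evo}, namely uniform sectoriality of the operators $A(t)$ in $C_b(\overline\Omega)$ (via \eqref{apriori}) and the resolvent-difference estimate \eqref{latticini}, and then reads off \eqref{bietola_appen} from the abstract theorem. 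To make your direct approach rigorous you would have to reprove, in this nonsmooth-in-space setting, regularity statements for $G_{\mathcal B}(\cdot,r)f$ near the initial time that are essentially equivalent to what the abstract theory provides, so the appeal to \cite{LadSolUra68Lin} cannot simply be ``imported directly'' as you suggest.
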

\begin{proof}
Estimate \eqref{bietola_appen} has been proved in \cite[Thms. 2.3(ix) \& 6.3]{Acq88Evo} when $\Omega$ is bounded, but the arguments used in
\cite{Acq88Evo} can be extended to our situation. The two cases being similar, we limit ourselves to dealing with the boundary operator
$\langle \beta,\nabla\rangle+\gamma I$.

For any $t \in[a,b]$ we denote by $A(t)$ the realization of $\mathcal{A}(t)$ in $C_b(\overline{\Om})$ with domain
$D(A(t))=\left\{u \in \bigcap_{p\ge 1}W^{2,p}_{\rm{loc}}(\Omega)\cap C_b(\overline{\Omega}): \,\,
\mathcal{A}(t)u \in C_b(\overline \Om),\, \mathcal{B}(t)u=0~{\rm on}~\partial\Omega\right\}$. To check the assumptions in \cite[Thm. 2.3(ix)]{Acq88Evo},
we have to prove the following properties:
\begin{enumerate}[\rm (i)]
\item
there exists $\omega \in \R$ such that, for any $t \in [a,b]$ and some
$\theta \in (\frac{\pi}{2}, \pi)$, $\rho(A(t))\supset \omega+\Sigma_{\theta}$, where $\Sigma_\theta=
\{\lambda \in \mathbb C\setminus\{0\}: |\rm{arg}\lambda|\le \theta\}$,
and the resolvent estimate in $\omega+\Sigma_{\theta}$ is uniform with respect to $t \in[a,b]$;
\item
there exist a positive constant $C$ and $0\le\theta_i<\alpha_i\le 2$ ($i=1,2$) such that
\begin{align}
&\|(A(t)-(\omega+1)I)R(\lambda+\omega+1, A(t))(R(\omega+1,A(s))-R(\omega+1,A(t)))\|_{\mathcal{L}(C_b(\Om))}\nnm\\
\le &C\Big (|\lambda|^{\theta_1-1}|t-s|^{\alpha_1}+|\lambda|^{\theta_2-1}|t-s|^{\alpha_2}\Big )
\label{latticini}
\end{align}
for any $t,s \in [a,b]$, $\lambda \in \Sigma_{\theta}$.
\end{enumerate}
Property (i) follows from the
estimate proved by H.B. Stewart (\cite{Ste80Gen}) in the autonomous case, noting that
the constants appearing in the proof depend only on the ellipticity constant, the modulus of continuity and the $L^{\infty}$-norm of the coefficients.
Such estimate shows that
\begin{align}
&|\lambda|\|u\|_{\infty}+|\lambda|^{\frac{1}{2}}\|\nabla u\|_{\infty}+|\lambda|^{\frac{d}{2p}}\sup_{x_0 \in \overline{\Om}}
\|D^2u\|_{L^p(\Om \cap B_{1/\sqrt{|\lambda|}}(x_0))}\nnm\\
&\le M\Big(|\lambda|^{\frac{d}{2p}}\inf_{t\in [a,b]}\sup_{x_0 \in \overline{\Om}}\|\lambda u-\mathcal{A}(t)u\|_{L^{p}(\Om
\cap B_{1/\sqrt{|\lambda|}}(x_0))}+|\lambda|^{\frac{1}{2}}\|g\|_{\infty}\nnm\\
&\qquad\quad+|\lambda|^{\frac{d}{2p}}\sup_{x_0 \in \overline{\Om}}\|\nabla g\|_{L^p(\Om \cap B_{1/\sqrt{|\lambda|}}(x_0))}\Big),
\label{apriori}
\end{align}
for any $\lambda$, with ${\rm Re}\lambda\ge\omega$, some $\omega>0$, any function $u\in W^{2,p}_{\rm loc}(\Omega)\cap C^1_b(\overline\Omega)$,
some $p>\max\{d/(2\alpha),d\}$. Here, $g$ is any
$W^{1,p}_{\rm{loc}}(\Om)$-extension of ${\mathcal B}(t)u$, and $M$ is a positive constant
independent of $\lambda$, $u$ and $g$. Since any operator $A(t)$ is sectorial, its resolvent set contains a right-halfline. Estimate \eqref{apriori}
shows
that $R(\cdot,A(t))$ is bounded in $\rho(A(t))\cap \{\lambda\in\C : {\rm Re}\lambda\ge\omega\}$ and this implies that
$\rho(A(t))\supset \{\lambda\in\C : {\rm Re}\lambda\ge\omega\}$
and
$\|R(\lambda,A(t))\|_{{\mathcal L}(C_b(\overline\Omega))}\le M|\lambda|^{-1}$ for any $\lambda\in\C$ with ${\rm Re}\lambda\ge\omega$.
Indeed, the norm of $R(\lambda,A(t))$ blows up as $\lambda$ approaches the boundary of $\rho(A(t))$. (see e.g. \cite[Prop. A.0.3]{Lun95Ana}).
Moreover, a simple argument based on von Neumann series and the previous estimate (see e.g., \cite[Prop. 3.1.11]{Lun95Ana}) shows that
$\rho(A(t))$ contains
the sector $\omega+\Sigma_{\theta}$ for $\theta=\pi-\arctan(2M)\in (\frac{\pi}{2},\pi)$ and
$\|R(\lambda,A(t))\|_{{\mathcal L}(C_b(\overline\Omega))}\le 2M|\lambda-\omega|^{-1}$ for any $\lambda\in\omega+\Sigma_{\theta}$.

Property (ii) can be proved arguing as in \cite[Thm. 6.3]{Acq88Evo}. For the reader's convenience we enter into details and we prove it with
$\alpha_1=\alpha$ $\alpha_2=\sigma$, $\theta_1=d/(2p)$ and $\theta_2=1/2$ (note that our assumptions on $\sigma$ and $p$ guarantee that the
conditions $\theta_1<\alpha_1$ and $\theta_2<\alpha_2$ are satisfied).
Fix $f\in C_b(\overline\Omega)$, $\lambda\in\C$ with positive real part, and let $v=R(\mu,A(s))f$ and
$u=R(\lambda+\mu,A(t))(\lambda+\mu-A(s))R(\mu,A(s))f$, where $\mu=\omega+1$.
Clearly, $u-v=(A(t)-\mu I)R(\lambda+\mu, A(t))(R(\mu,A(s))-R(\mu,A(t)))f$. So, if we set $w_{\lambda,\mu}=u-v$, estimate \eqref{latticini} becomes
\begin{align}
\|w_{\lambda,\mu}\|_{\infty}
\le C\Big (|\lambda|^{\frac{d}{2p}-1}|t-s|^{\alpha}+|\lambda|^{-\frac{1}{2}}|t-s|^{\sigma}\Big )\|f\|_{\infty},
\label{A-2}
\end{align}
for some constant $C$, independent of $f,\lambda,\alpha,\sigma,t,s$.

Applying estimate \eqref{apriori} to the function $w_{\lambda,\mu}\in C^1_b(\overline\Omega)\cap\bigcap_{p<+\infty}W^{2,p}_{\rm loc}(\Omega)$
which satisfies the elliptic problem
\begin{eqnarray*}
\left\{
\begin{array}{ll}
(\lambda+\mu)w_{\lambda,\mu}-{\mathcal A}(t)w_{\lambda,\mu}=({\mathcal A}(t)-{\mathcal A}(s))v, & {\rm in}\;\Om,\\[1mm]
{\mathcal B}(t)w_{\lambda,\mu}=[{\mathcal B}(s)-{\mathcal B}(t)]v, & {\rm on}\;\partial\Omega,
\end{array}.
\right.
\end{eqnarray*}
we get
\begin{align}
\|w_{\lambda,\mu}\|_{\infty}
\le M_1\bigg (&|\lambda+\mu|^{\frac{d}{2p}-1}\sup_{x_0 \in \overline{\Om}}\|({\mathcal A}(t)
-{\mathcal A}(s))v\|_{L^p(\Om \cap B_{1/\sqrt{|\lambda+\mu|}}(x_0))}\nnm\\
&+|\lambda+\mu|^{\frac{d}{2p}-1}\sup_{x_0 \in \overline{\Om}}
\|\nabla_x({\mathcal B}(s)-\nabla_x{\mathcal B}(t))v\|_{L^p(\Om \cap B_{1/\sqrt{|\lambda+\mu|}}(x_0))}\nnm\\
&+|\lambda+\mu|^{-\frac{1}{2}}\|({\mathcal B}(s)-{\mathcal B}(t))v\|_{\infty}\bigg ),
\label{A-0}
\end{align}
for some positive constant $M_1$, independent of $f,\lambda,\mu,t,s,$. From now on, we denote by $L_j$ positive constants independent of
$f,\lambda,\mu,\alpha,\sigma,t,s,x_0$. The smoothness of the coefficients imply that
\begin{align}
& \|({\mathcal A}(t)-{\mathcal A}(s))v\|_{L^p(\Om \cap B_{1/\sqrt{|\lambda+\mu|}}(x_0))}\nnm\\
\le & L_1|t-s|^{\alpha}\left (\|D^2v\|_{L^p(\Om \cap B_{1/\sqrt{|\lambda+\mu|}}(x_0))}+
|\lambda+\mu|^{-\frac{d}{2p}}\|v\|_{C^1_b(\overline\Omega)}\right ),
\label{A}\\[3mm]
&\|({\mathcal B}(s)-{\mathcal B}(t))v\|_{\infty}\le L_2|t-s|^{\sigma}\|v\|_{C^1_b(\overline\Omega)}
\label{B}
\\[3mm]
&\|\nabla_x({\mathcal B}(s)v-{\mathcal B}(t))v\|_{L^p(\Om \cap B_{1/\sqrt{|\lambda+\mu|}}(x_0))}\nnm\\
\le &L_3|t-s|^{\sigma}\left (\|D^2v\|_{L^p(\Om \cap B_{1/\sqrt{|\lambda+\mu|}}(x_0))}+
|\lambda+\mu|^{-\frac{d}{2p}}\|v\|_{C^1_b(\overline\Omega)}\right ).
\label{C}
\end{align}
Moreover, estimate \eqref{apriori} applied to $v=R(\mu,A(s))f$ shows that
\begin{equation}
\|v\|_{C^1_b(\overline\Omega)}+\sup_{x_0 \in \overline{\Om}}\|D^2v\|_{L^p(\Om \cap B_{1/\sqrt{|\lambda+\mu|}}(x_0))}\le L_4\|f\|_{\infty},
\label{B-G:1-7}
\end{equation}
where we take into account that the ball $B_{1/\sqrt{|\lambda+\mu|}}(x_0)$ is contained into the ball $B_{1/\sqrt{|\mu|}}(x_0)$ since
${\rm Re}\lambda>0$ and $\mu>0$. Replacing \eqref{A}-\eqref{B-G:1-7} into \eqref{A-0}, taking into account that $|\lambda+\mu|\ge \mu>1$
(which implies that $|\lambda+\mu|^{-1}\le |\lambda+\mu|^{\frac{d}{2p}-1}\le |\lambda+\mu|^{-\frac{1}{2}}$), that $|\lambda+\mu|\ge |\lambda|$
(since ${\rm Re}\lambda>0$) and our choice of $p$, we deduce estimate \eqref{A-2} in the halfplane $\{\lambda\in\C: {\rm Re}\lambda\ge 0\}$.

To extend \eqref{A-2} to $\Sigma_{\theta}$, we use again the proof of \cite[Prop. 3.1.11]{Lun95Ana} which shows that 
$R(\lambda+\mu,A(t))=\sum_{n=0}^{+\infty}(-{\rm Re} \lambda)^nR(\mu+i{\rm Im}\lambda,A(t))^{n+1}$
for any $\lambda\in\Sigma_{\theta}$ with negative real part.
Therefore,
\begin{align*}
\|w_{\lambda,\mu}\|_{\infty}
=&\left\|\sum_{n=0}^{+\infty}(-{\rm Re} \lambda)^nR(\mu+i{\rm Im}\lambda,A(t))^nw_{\mu+i{\rm Im}\lambda}\right\|_{\infty}\\
\le &
L_5\Big (|{\rm Im}\lambda|^{\frac{d}{2p}-1}|t-s|^{\alpha}+|{\rm Im}\lambda|^{-\frac{1}{2}}|t-s|^{\sigma}\Big )\\
&\quad\times\sum_{n=0}^{+\infty}|{\rm Re} \lambda|^n\|R(\mu+i{\rm Im}\lambda,A(t))\|^n_{{\mathcal L}(C_b(\overline\Omega))}.
\end{align*}

To estimate the series, we recall that the choice of $\theta$ implies that $|{\rm Re}\lambda|\le (2M)^{-1}|{\rm Im}\lambda|$
for any $\lambda\in\Sigma_{\theta}$. This and the resolvent estimate proved above show that
$\sum_{n=0}^{+\infty}|{\rm Re} \lambda|^n\|R(\mu+i{\rm Im}\lambda,A(t))\|^n_{{\mathcal L}(C_b(\overline\Omega))}\le 2$ for
any $\lambda\in\Sigma_{\theta}$, and estimate \eqref{A-2} follows in the whole of $\Sigma_{\theta}$.
\end{proof}

\begin{lemm}\label{geometric}
Assume that Hypothesis \ref{hyp-reg-coeff}(i) holds and that $\beta \in C^{2+\alpha}_{\rm loc}(\overline{\Omega},\Rd)$ is bounded together with all its derivatives on $\partial\Omega$.
Then, there exists $r_0>0$ such that, for any $x_0\in\partial\Omega$, there exists $\phi_{x_0}\in C^2(B_{r_0}(x_0),\Rd)$ such that
\begin{equation}
J\phi_{x_0}(x)\beta(x)= \rho_{x_0}(x)e_d,\qquad\;\, x \in B_{r_0}(x_0)\cap \partial \Om,\;\,x_0\in \partial\Omega,
 \label{costarica}
\end{equation}
where $e_d=(0,\ldots,0,1)^T$, for some continuous function $\rho_{x_0}$, which nowhere vanishes on $B_{r_0}(x_0)\cap\partial\Omega$.
Moreover, $\phi_{x_0}(B_{r_0}(x_0)\cap\Omega)$ is a bounded domain contained in $\Rd_+$, $\phi_{x_0}(x)\in\overline{\Rd_+}$ if
and only if $x\in B_{r_0}(x_0)\cap\overline\Omega$,
$\phi_{x_0}(B_{r_0}(x_0)\cap\partial\Omega)\subset\overline B_1^+\cap \partial\Rd_+$ and there exists a positive constant $\Phi$ such that
\begin{equation}
\sup_{x_0 \in \partial\Omega}\left(\|\phi_{x_0}\|_{C^{2+\alpha}(B_{r_0}(x_0))}+\|\phi_{x_0}^{-1}\|_{C^{2+\alpha}(\phi_{x_0}(B_{r_0}(x_0)))}
\right)\leq \Phi.
\label{Fi}
\end{equation}
\end{lemm}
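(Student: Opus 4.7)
I will construct $\phi_{x_0}$ as the composition of two diffeomorphisms: first the boundary-straightening chart $\psi_{h(x_0)}$ from Remark \ref{local charts}(a) (available with uniform $C^{3+\alpha}$-bounds by Hypothesis \ref{hyp-reg-coeff}(i)), and then a second map $\Xi$ that, in the straightened picture, rectifies the push-forward $\tilde\beta(y) := J\psi_h(\psi_h^{-1}(y))\,\beta(\psi_h^{-1}(y))$ of $\beta$ into a multiple of $e_d$ on $\{y_d=0\}$. The uniformity in $x_0$ of the radius $r_0$ and the constant $\Phi$ will be driven by the uniform $C^{3+\alpha}$-bounds on $\psi_h$ and $\psi_h^{-1}$, the uniform $C^{2+\alpha}$-bounds on $\beta$, and the uniform lower bound $\inf_{\partial\Omega}\langle\beta,\nu\rangle>0$ from Hypothesis \ref{hyp1-bc}(iii).

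The key observation is that, by Remark \ref{local charts}(c), $\nabla\psi_h^d = -|\nabla\psi_h^d|\nu$ on $\partial\Omega\cap V_h$, so
\[
\tilde\beta_d(y',0) \,=\, \langle\nabla\psi_h^d,\beta\rangle\!\circ\!\psi_h^{-1}(y',0)
\,=\, -|\nabla\psi_h^d|\,\langle\nu,\beta\rangle\!\circ\!\psi_h^{-1}(y',0) \,\le\, -c_0,
\]
for a positive constant $c_0$ independent of $x_0$. I then introduce the model map
\[
\Psi(z',t) \,:=\, (z',0) - t\,\tilde\beta(z',0), \qquad (z',t)\in\R^{d-1}\times\R,
\]
whose Jacobian at $(z',0)$ is upper block-triangular with diagonal blocks $I_{d-1}$ and $-\tilde\beta_d(z',0)\ge c_0$. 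A quantitative inverse function theorem, whose constants depend only on $\|\tilde\beta\|_{C^{2+\alpha}(\partial\Rd_+\cap B_1)}$ and $c_0$, produces a $C^{2+\alpha}$-inverse $\Xi := \Psi^{-1}$ on a neighborhood $W_{x_0}$ of $\psi_h(x_0)$ whose size is bounded below uniformly in $x_0$. The identities $\partial_t\Psi(z',0) = -\tilde\beta(z',0)$ and $\Psi_d(z',t) = -t\,\tilde\beta_d(z',0)$ yield, respectively, the conjugation formula $J\Xi(z',0)\,\tilde\beta(z',0) = -e_d$ on $\{y_d=0\}$ and the sign-preserving property $\Xi(W_{x_0}\cap\{y_d>0\})\subset\{t>0\}$, $\Xi(W_{x_0}\cap\{y_d=0\})\subset\{t=0\}$.

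Setting $\phi_{x_0}:=\Xi\circ\psi_h$ restricted to a ball $B_{r_0}(x_0)$, with $r_0>0$ fixed once and for all so that $\psi_h(B_{r_0}(x_0))\subset W_{x_0}$ and, after a harmless fixed rescaling of $\Xi$, also $\phi_{x_0}(B_{r_0}(x_0)\cap\partial\Omega)\subset\overline{B_1^+}\cap\partial\Rd_+$, the chain rule gives, for $x\in B_{r_0}(x_0)\cap\partial\Omega$,
\[
J\phi_{x_0}(x)\beta(x) \,=\, J\Xi(\psi_h(x))\,\tilde\beta(\psi_h(x)) \,=\, -e_d,
\]
so one may take $\rho_{x_0}\equiv -1$, which is continuous and nowhere vanishing. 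The equivalence $\phi_{x_0}(x)\in\overline{\Rd_+}\Leftrightarrow x\in B_{r_0}(x_0)\cap\overline\Omega$ and the boundedness of $\phi_{x_0}(B_{r_0}(x_0)\cap\Omega)$ follow from the sign-preserving property of $\Xi$ together with the fact that $\phi_{x_0}$ is a $C^{2+\alpha}$-diffeomorphism onto its image; the uniform bound \eqref{Fi} is just the composition of the uniform estimates on $\psi_h^{\pm 1}$ with the uniform $C^{2+\alpha}$-estimates on $\Xi$ and $\Psi$.

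The main obstacle is exactly the uniformity in $x_0$: extracting a common radius $r_0$ and a common constant $\Phi$. This requires a quantitative version of the inverse function theorem with constants expressed purely in terms of the uniform a priori bounds on $\tilde\beta$ in $C^{2+\alpha}(\partial\Rd_+\cap B_1)$ and the uniform lower bound $c_0$ for $|\tilde\beta_d|$ on the flat boundary; both of these are controlled uniformly by the uniform $C^{3+\alpha}$-smoothness of $\partial\Omega$ and the hypotheses on $\beta$. Once the neighborhoods $W_{x_0}$ have uniform size and $\Xi^{\pm 1}$ have uniform $C^{2+\alpha}$-norms, the rest of the construction is a routine composition and restriction.
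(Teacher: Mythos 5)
Your construction is correct, and it rests on the same two pillars as the paper's own proof: the identity $\nabla\psi_h^d=-|\nabla\psi_h^d|\,\nu$ on $\partial\Omega$ (Remark \ref{local charts}(c)), which together with Hypothesis \ref{hyp1-bc}(iii) bounds the last component of the transported field $\tilde\beta$ away from zero on the flat boundary, and the uniform $C^{3+\alpha}$-bounds on $\psi_h^{\pm 1}$ coming from Hypothesis \ref{hyp-reg-coeff}(i), which are what make $r_0$ and the bound \eqref{Fi} uniform in $x_0$. The only genuine difference is how the rectifying map is written. The paper corrects $\psi_h$ by an explicit shear, setting $\tilde\phi_h^k=\psi_h^k-(\Upsilon_h^k/\Upsilon_h^d)\psi_h^d$ for $k\le d-1$ and $\tilde\phi_h^d=\psi_h^d$, where $\Upsilon_h(y)=J\psi_h(\psi_h^{-1}(y))\,\beta(\psi_h^{-1}(y',0))$; this yields \eqref{costarica} with the nonconstant factor $\rho_{x_0}=-|\nabla\psi_{h(x_0)}^d|\langle\beta,\nu\rangle$, makes the sign statements (the ``if and only if'' and the inclusion of the image in $\Rd_+$) immediate because the last coordinate is untouched, and uses the inverse function theorem only once, applied directly to $\tilde\phi_h$, whose Jacobian determinant equals $\det J\psi_h$ on $\partial\Omega$. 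You instead invert the model map $\Psi(z',t)=(z',0)-t\tilde\beta(z',0)$, which costs you a quantitative inverse function theorem for $\Psi$ (with constants controlled exactly by the quantities you list, so the uniformity survives) but buys the normalization $\rho_{x_0}\equiv-1$; note moreover that your $\Xi$ is the identity on $\{y_d=0\}$, so $\phi_{x_0}$ coincides with $\psi_{h(x_0)}$ on $B_{r_0}(x_0)\cap\partial\Omega$ and the inclusion $\phi_{x_0}(B_{r_0}(x_0)\cap\partial\Omega)\subset\overline{B_1^+}\cap\partial\Rd_+$ comes for free, without the rescaling you invoke. Both routes need the $C^{3+\alpha}$-charts so that $\tilde\beta$ (respectively $\Upsilon_h$) is $C^{2+\alpha}$, and both need the routine remark that $x_0\in B_{R/2}(x_{h(x_0)})$ keeps $\psi_{h(x_0)}(x_0)$ at uniform distance from $\partial B_1$, so that the uniform neighborhoods produced by the inverse function theorem fit inside the chart; you leave this implicit, as does the paper.
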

\begin{proof}
Let $(B_R(x_h), \psi_h)_{h \in \N}$ be the covering of $\partial\Omega$ in Remark \ref{local charts}, with $x_h \in \partial \Om$ and $R>0$.
For any $x=(x',x_d)\in \overline{B_1}=\psi_h(\overline{B_R(x_h)})$, we introduce the vector
$\Upsilon_h(x)=J \psi_h(\psi_h^{-1}(x))\beta(\psi_h^{-1}(x',0))$.

 From now on, we fix an arbitrary $h\in\N$.
We claim that the last component $\Upsilon_h^d$ of $\Upsilon_h$ nowhere vanishes on $\overline{B_1^+}$ or, equivalently, on
$\overline{B_1^+}\cap\partial\Rd_+$. As it is easily seen,
\begin{equation}
\Upsilon_h^d(x)=\langle\nabla_x\psi_h^d(\psi_h^{-1}(x)),\beta(\psi_h^{-1}(x))\rangle,\qquad\;\,x\in \overline{B_1^+}\cap\partial\Rd_+.
\label{lemma-geom}
\end{equation}
By Remark \ref{local charts}(c),
$\nabla\psi_h^d(\psi_h^{-1}(x))=-|\nabla_x\psi_h^d(\psi_h^{-1}(x))|\nu(\psi_h^{-1}(x))$ for any $x\in \overline{B_1^+}\cap\partial\Rd_+$.
Formula \eqref{lemma-geom} now shows that
\begin{equation*}
|\Upsilon_h^d(x)|=|\nabla_x\psi_h^d(\psi_h^{-1}(x))|\langle\beta(\psi_h^{-1}(x)),\nu(\psi_h^{-1}(x))\rangle,\qquad\;\,x\in\overline{B_1^+}
\cap\partial\Rd_+.
\end{equation*}
 Recalling that $\beta$ satisfies Hypothesis \ref{hyp1-bc}(iii) and the gradient of $\psi_h$ nowhere vanishes in $B_R(x_h)$, we conclude that
$\Upsilon_h^d$ nowhere vanishes on $\overline{B_1^+}\cap\partial\Rd_+$.

We can thus define the function $\tilde\phi_h: \overline{B_R(x_h)}\to\Rd$ by setting
\begin{align*}
\tilde\phi_h(x)=\left(\psi_h^1(x)-\frac{\Upsilon_h^1(\psi_h(x))}{\Upsilon_h^d(\psi_h(x))}\psi_h^d(x), \dots,\psi_h^{d-1}(x)
-\frac{\Upsilon_h^{d-1}(\psi_h(x))}{\Upsilon_h^d(\psi_h(x))}\psi_h^{d}(x), \psi_h^d(x)\right),
\end{align*}
for any $x\in \overline {B_R(x_h)}$. It is easy to notice that $\tilde\phi_h(B_R(x_h) \cap \partial \Om)=\overline{B_1^+}\cap\partial\Rd_+$,
that $\tilde\phi_h(B_R(x_h) \cap \Om)$ is a bounded subset of $\Rd_+$ and
$\tilde\phi_h(x)\in\overline{\Rd_+}$ if and only if $x\in \overline{B_R(x_h)}\cap
\overline{\Omega}$. Indeed $\psi_h$ and $\tilde\phi_h$ agree on $B_R(x_h)\cap\partial\Omega$, $\tilde\phi_h^d\equiv \psi_h^d$ in $B_R(x)\cap\overline\Omega$
and $\psi_h^d(x)>0$ (resp. $\psi_h^d(x)=0$) if and only if $x\in B_R(x_h)\cap\Omega$
(resp. $x\in B_R(x_h)\cap\partial\Omega$).
Moreover, $\tilde\phi_h\in C^{2+\alpha}(B_R(x_h)\cap\Omega)$ and $\sup_{h\in\N}\|\tilde\phi_h\|_{C^{2+\alpha}(B_R(x_h)\cap\Omega)}<+\infty$.

Let us now prove that
\begin{equation}
J\tilde\phi_h(x)\beta(x)= -\langle\beta(x),\nu(x)\rangle|\nabla \psi_h^d(x)|e_d,\qquad\;\, x \in \partial \Om\cap B_R(x_h).
\label{costarica-0}
\end{equation}
For this purpose, let us fix $x \in \partial \Om\cap B_R(x_h)$. Since
$\langle \beta(x), \nabla \psi^k_h(x)\rangle= \Upsilon^k_h(\psi_h(x))$ for any $k=1,\ldots,d$, it holds that
\begin{align*}
&(J\tilde\phi_h(x)\beta(x))_k=\langle \beta(x), \nabla \psi_h^k(x)\rangle-\frac{\Upsilon_h^k(\psi_h(x))}{\Upsilon_h^d(\psi_h(x))}
\langle \beta(x), \nabla \psi_h^d(x)\rangle=0,\qquad k\le d-1,\\
&
(J\tilde\phi_h(x)\beta(x))_d=\langle \beta(x),\nabla \psi_h^d(x)\rangle=-\langle\beta(x),\nu(x)\rangle|\nabla \psi_h^d(x)|.
\end{align*}

Let us now recall that $\bigcup_{h\in\N}B_{R/2}(x_h)\supset \Omega_{\varepsilon}$ for some $\varepsilon$.
We now want to prove that there exists $r_0>0$ such that, for any $h\in\N$ and any $x_0\in\partial\Omega\cap B_{R/2}(x_h)$,
the function $\tilde\phi_h$ is invertible in $B_{r_0}(x_0)$. For this purpose, we observe that
\begin{equation*}
J \tilde\phi_h(x)=J\psi_h(x)-
\left(\begin{array}{ccc}
\frac{\Upsilon_h^1(\psi_h(x))}{\Upsilon_h^d(\psi_h(x))}D_1\psi_h^d(x)&\dots& \frac{\Upsilon_h^1(\psi_h(x))}{\Upsilon_h^d(\psi_h(x))}D_d\psi_h^d(x)\\
\vdots & \vdots& \vdots\\
\frac{\Upsilon_h^{d-1}(\psi_h(x))}{\Upsilon_h^d(\psi_h(x))}D_1\psi_h^d(x)& \dots &
\frac{\Upsilon_h^{d-1}(\psi_h(x))}{\Upsilon_h^d(\psi_h(x))}D_d\psi_h^d(x)\\
0 & \dots & 0
\end{array}
\right),
\end{equation*}
for any $x \in B_R(x_h)\cap \partial \Om$. Hence, ${\rm det}(J\tilde\phi_h(x))={\rm det}(J\psi_h(x))\neq 0$ for such $x$'s.
Now, Remark \ref{local charts} shows that there exists a positive constant $C$, independent of $h$, such that
${\rm det}(J\tilde\phi_h(x))\ge C$ for any $x\in B_R(x_h)\cap\partial\Omega$. The inverse mapping theorem and, again 
the equiboundedness of the norms the functions $\psi_h$ and $\psi_h^{-1}$ show that we can determine $r_0\in (0,R/2)$, independent of $h$, such that $\tilde\phi_h$ is invertible in
$B_{r_0}(x_0)$ for any $x_0\in \partial\Omega\cap B_{R/2}(x_h)$ and its inverse map belongs to $C^{2+\alpha}(\phi_h(B_{r_0}(x_0)\cap\Omega))$
with $C^{2+\alpha}$-norm bounded uniformly with respect to $h$ and $x_0$. We set $\phi_{x_0,h}=(\tilde\phi_h)_{|B_{r_0}(x_0)\cap\Omega}$.

For any $x_0\in\partial\Omega$, we denote by $h(x_0)$ the smallest integer such that $x_0\in B_{R/2}(x_h)$, and we define
$\phi_{x_0}=\phi_{x_0,h(x_0)}$. From the previous results, we know that the family $\{\phi_{x_0}: x_0\in\partial\Omega\}$ satisfies \eqref{Fi}.
Moreover, formula \eqref{costarica-0} yields \eqref{costarica} with
$\rho_{x_0}(x)=-|\nabla\psi_{h(x_0)}^d(x)|\langle\beta(x),\nu(x)\rangle$.
\end{proof}

\begin{lemm}
\label{lemma-approx}
For any $0<r_1<r_2<r_0$ and any $x_0\in\partial\Omega$, there exist a function $\vartheta\in C^{\infty}_c(\overline{\Rd_+})$
and a positive constant $C$ such that $\chi_{\phi_{x_0}(B_{r_1}(x_0)\cap\Omega)}\le \vartheta\le \chi_{\phi_{x_0}(B_{r_2}(x_0)\cap\overline\Omega)}$,
$D_d\vartheta\equiv 0$ on $\partial\Rd_+$ and 
$\|D^k\vartheta\|_{\infty}\le K(r_2-r_1)^{-k}$ for $k=0,1,2,3$ and some positive constant $K$, where $\phi_{x_0}$ and $r_0$ are as in Lemma $\ref{geometric}$.
\end{lemm}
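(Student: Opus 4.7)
The plan is to transfer the problem from $\overline{\Rd_+}$ to all of $\R^d$ via the reflection $\sigma(y',y_d):=(y',-y_d)$, so that the Neumann-type requirement $D_d\vartheta\equiv 0$ on $\partial\Rd_+$ is enforced automatically by $\sigma$-symmetry; the cutoff itself will then be produced by mollifying an indicator function. Throughout, let $\Phi$ denote the constant in \eqref{Fi} and set
\begin{equation*}
A:=\phi_{x_0}(B_{r_1}(x_0)\cap\Omega)\subset\Rd_+, \qquad B:=\phi_{x_0}(B_{r_2}(x_0)\cap\overline\Omega)\subset\overline{\Rd_+}.
\end{equation*}

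The first step is the geometric separation
\begin{equation*}
\mathrm{dist}(A,\,\overline{\Rd_+}\setminus B)\,\geq\,\frac{r_2-r_1}{\Phi}.
\end{equation*}
For $y\in\overline{\Rd_+}\setminus B$ lying in $\phi_{x_0}(B_{r_0}(x_0))$, Lemma~\ref{geometric} forces $\phi_{x_0}^{-1}(y)\in\overline\Omega\cap(B_{r_0}(x_0)\setminus B_{r_2}(x_0))$, which sits at distance $\geq r_2-r_1$ from $B_{r_1}(x_0)$, and since $\phi_{x_0}^{-1}$ is Lipschitz with constant at most $\Phi$ by \eqref{Fi}, the desired lower bound follows; if instead $y\notin\phi_{x_0}(B_{r_0}(x_0))$, then $y$ must lie on the far side of $\phi_{x_0}(\partial B_{r_0}(x_0))$, which is already at distance $\geq (r_0-r_1)/\Phi$ from $A$.

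Next, I form the $\sigma$-symmetric set $\tilde A:=A\cup\sigma(A)\subset\R^d\setminus\partial\Rd_+$, fix
$$\epsilon:=\frac{r_2-r_1}{6\Phi},$$
and pick $\rho\in C^\infty_c(B_1)$ nonnegative, even in the last variable, with $\int\rho=1$; set $\rho_\epsilon(y):=\epsilon^{-d}\rho(y/\epsilon)$. I would then define on $\R^d$
\begin{equation*}
\vartheta_1\,:=\,\chi_{\tilde A_\epsilon}\ast \rho_\epsilon,
\end{equation*}
where $\tilde A_\epsilon$ is the closed $\epsilon$-neighborhood of $\tilde A$. Standard mollification properties give $\vartheta_1\in C^\infty(\R^d;[0,1])$, $\vartheta_1\equiv 1$ on $\tilde A$, $\mathrm{supp}\,\vartheta_1\subset\tilde A_{2\epsilon}$, and $\|D^k\vartheta_1\|_\infty\leq\epsilon^{-k}\|D^k\rho\|_{L^1}\leq K(r_2-r_1)^{-k}$ for $k=0,1,2,3$. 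Since both $\tilde A$ and $\rho_\epsilon$ are $\sigma$-invariant, so is $\vartheta_1$, whence $D_d\vartheta_1(y',0)=0$ for every $y'\in\R^{d-1}$. The candidate cutoff is $\vartheta:=\vartheta_1|_{\overline{\Rd_+}}$.

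The only delicate point, and the place where the choice of $\epsilon$ must be tuned, is the support containment $\tilde A_{2\epsilon}\cap\overline{\Rd_+}\subset B$: for $y\in A_{2\epsilon}\cap\overline{\Rd_+}$ the separation estimate gives $y\in B$ at once, since $2\epsilon<(r_2-r_1)/\Phi$; for $y\in(\sigma A)_{2\epsilon}\cap\overline{\Rd_+}$, pick $z\in\sigma A$ with $|y-z|<2\epsilon$, use $y_d\geq 0$ and $z_d\leq 0$ to infer $|z_d|\leq|y_d-z_d|\leq|y-z|<2\epsilon$, and then estimate $|y-\sigma z|\leq|y-z|+2|z_d|<6\epsilon=(r_2-r_1)/\Phi$, with $\sigma z\in A$, so that the separation estimate again forces $y\in B$. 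I do not anticipate a substantive obstacle beyond this bookkeeping; the argument is soft, and the reflection trick automatically produces the boundary condition that would otherwise require a nontrivial correction term.
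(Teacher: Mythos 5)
Your construction is correct and essentially the same as the paper's: both proofs symmetrize $\phi_{x_0}(B_{r_1}(x_0)\cap\Omega)$ across $\partial\Rd_+$, mollify the indicator of its $\varepsilon$-neighbourhood with the identical choice $\varepsilon=(r_2-r_1)/(6\Phi)$ and a mollifier even in the last variable (which produces $D_d\vartheta\equiv 0$ on $\partial\Rd_+$), restrict to $\overline{\Rd_+}$, and use the uniform bound \eqref{Fi} on $\phi_{x_0}^{-1}$ to obtain the separation that forces the support into $\phi_{x_0}(B_{r_2}(x_0)\cap\overline\Omega)$, with the derivative bounds coming from the scaling of the mollifier. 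The only (presentational) difference is in the containment step: the paper estimates the distance between the images of the spheres $\partial B_{r_1}(x_0)$ and $\partial B_{(r_1+r_2)/2}(x_0)$, whereas you bound ${\rm dist}\big(\phi_{x_0}(B_{r_1}(x_0)\cap\Omega),\overline{\Rd_+}\setminus\phi_{x_0}(B_{r_2}(x_0)\cap\overline\Omega)\big)$ directly and track the reflected neighbourhood by hand.
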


\begin{proof}
We fix $r_1, r_2$ as in the statement and let $\varepsilon=\frac{r_2-r_1}{6\Phi}$, where $\Phi$ is defined in \eqref{Fi}, and let
$\vartheta_0$ be the convolution of a standard mollifier, supported in the ball $B_{\varepsilon}$, and
the characteristic function of the set $\{x\in\Rd: d(x,E)\le\varepsilon\}$, where
$E=\{x\in\Rd: (x',|x_d|)\in\phi_{x_0}(B_{r_1}(x_0)\cap\Omega)\}$.
The function $\vartheta_0$ is smooth and even with respect to the last coordinate, hence $D_d\vartheta_0\equiv 0$ on $\partial\Rd_+$.
We set $\vartheta=(\vartheta_0)_{|\overline{\Rd_+}}$. Clearly, $\vartheta\equiv 1$ in
$\phi_{x_0}(B_{r_1}(x_0)\cap\Omega)$ and its support is contained in the set
$F=\{x\in\overline{\Rd_+}: d(x,\phi_{x_0}(B_{r_1}(x_0)\cap\Omega))\le 2\varepsilon\}$.
We claim that $F\subset\phi_{x_0}(\overline{B_{\frac{r_1+r_2}{2}}(x_0)}\cap\overline\Omega)$.
Note that it is enough to prove that $F\subset\phi_{x_0}(\overline{B_{\frac{r_1+r_2}{2}}(x_0)})$. Indeed, from the definition of the
function $\phi_{x_0}$ it follows that
$\phi_{x_0}(\overline{B_{\frac{r_1+r_2}{2}}(x_0)})\cap\overline{\Rd_+}=\phi_{x_0}(\overline{B_{\frac{r_1+r_2}{2}}(x_0)}\cap\overline\Omega)$.
Since
$|x-x'|=|\phi_{x_0}^{-1}(\phi_{x_0}(x))-\phi_{x_0}^{-1}(\phi_{x_0}(x'))|\le\Phi |\phi_{x_0}(x)-\phi_{x_0}(x')|$
for any $x\in\partial B_{r_1}(x_0)$ and $x'\in \partial B_{\frac{r_1+r_2}{2}}(x_0)$, we immediately deduce that
\begin{align*}
d(\phi_{x_0}(\partial B_{r_1}(x_0)),\phi_{x_0}(\partial B_{\frac{r_1+r_2}{2}}(x_0)))
\ge {r_2-r_1\over 2\Phi}=3\varepsilon.
\end{align*}
Noting that
\begin{align*}
d(\phi_{x_0}(B_{r_1}(x_0)),\Rd\setminus\phi(B_{\frac{r_1+r_2}{2}}(x_0)))=&d(\partial \phi_{x_0}(B_{r_1}(x_0),\partial\phi_{x_0}(B_{\frac{r_1+r_2}{2}}(x_0)))\\
=&d(\phi_{x_0}(\partial B_{r_1}(x_0)),\phi_{x_0}(\partial B_{\frac{r_1+r_2}{2}}(x_0))),
\end{align*}
we conclude that $d(x,\phi_{x_0}(B_{r_1}(x_0)))\ge 3\varepsilon$ for any $x\in\Rd\setminus\phi(B_{\frac{r_1+r_2}{2}}(x_0))$ and this shows that
$F\subset\phi_{x_0}(\overline{B_{\frac{r_1+r_2}{2}}(x_0)})$ as claimed.
Indeed, if $x\in F$, it holds that $d(x,\phi_{x_0}(B_{r_1}(x_0))\le d(x,\phi_{x_0}(B_{r_1}(x_0)\cap\Omega))\le 2\varepsilon$.
It thus follows that
${\rm supp}\,\vartheta\subset\phi_{x_0}(B_{r_2}(x_0)\cap\overline\Om)$.

Finally, we observe that $\|D^k\vartheta\|_{\infty}\le M\varepsilon^{-k}$ for $k=0,1,2,3$ and some positive constant $M$. Our choice
of $\varepsilon$ leads to the estimates in the statement.
\end{proof}

\begin{lemm}
\label{lem-int}
Let $f:[a,b]\times\Omega\to\R$ be a bounded and continuous function and let $g:\Omega\to\R$ be the function
defined by
$g(x)=\int_a^bf(t,x)dt$ for any $x\in\Omega$.
Then, $g\in C_b(\Omega)$. Moreover, for any bounded linear operator $T:C_b(\Omega)\to C_b(\Omega)$, which transforms
bounded sequence of continuous functions, converging locally uniformly in $\Omega$, into sequences with the same properties,
it holds that
$(Tg)(x)=\int_a^b(Tf(t,\cdot))(x)dt$ for any $x\in\Omega$.
\end{lemm}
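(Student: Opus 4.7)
My plan is to establish the two assertions in order, with the second one being proved by a Riemann sum approximation.

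First, I would verify that $g\in C_b(\Omega)$. Boundedness is immediate from the bound $|g(x)|\le(b-a)\|f\|_\infty$. For continuity, I would fix $x_0\in\Omega$ and a sequence $x_n\to x_0$ in $\Omega$; since $f$ is continuous, $f(\cdot,x_n)$ converges pointwise to $f(\cdot,x_0)$ on $[a,b]$ and is dominated by $\|f\|_\infty$, so dominated convergence yields $g(x_n)\to g(x_0)$.

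For the second assertion, I would approximate $g$ locally uniformly by Riemann sums. Fix a sequence of partitions $a=t_0^n<t_1^n<\cdots<t_{k_n}^n=b$ of $[a,b]$ with mesh tending to $0$, and define $g_n\in C_b(\Omega)$ by $g_n(x)=\sum_{k=1}^{k_n}f(t_k^n,x)(t_k^n-t_{k-1}^n)$. On any compact set $K\subset\Omega$ the function $f$ is uniformly continuous on $[a,b]\times K$, so $g_n\to g$ uniformly on $K$; moreover $\|g_n\|_\infty\le(b-a)\|f\|_\infty$, so $(g_n)$ is a bounded sequence in $C_b(\Omega)$ converging locally uniformly to $g$. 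By the hypothesis on $T$, $Tg_n\to Tg$ pointwise (in fact locally uniformly), and by linearity
\begin{equation*}
(Tg_n)(x)=\sum_{k=1}^{k_n}(Tf(t_k^n,\cdot))(x)(t_k^n-t_{k-1}^n).
\end{equation*}

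To identify the right-hand limit, I need the map $\varphi_x:t\mapsto(Tf(t,\cdot))(x)$ to be continuous on $[a,b]$, so that the above Riemann sums converge to $\int_a^b(Tf(t,\cdot))(x)\,dt$. For this, given $t\in[a,b]$ and $t_m\to t$, the sequence $(f(t_m,\cdot))\subset C_b(\Omega)$ is bounded by $\|f\|_\infty$ and converges locally uniformly to $f(t,\cdot)$ (again by uniform continuity of $f$ on compact cylinders $[a,b]\times K$); applying the hypothesis on $T$ yields $(Tf(t_m,\cdot))(x)\to(Tf(t,\cdot))(x)$, which gives the continuity of $\varphi_x$. Since $\|Tf(t,\cdot)\|_\infty\le\|T\|_{\mathcal{L}(C_b(\Omega))}\|f\|_\infty$, the integral is well-defined and the Riemann sums converge to it. Passing to the limit in both sides of the displayed identity gives $(Tg)(x)=\int_a^b(Tf(t,\cdot))(x)\,dt$.

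The only delicate step is the local uniform convergence $g_n\to g$ on compact subsets of $\Omega$, which is needed to invoke the hypothesis on $T$; this is the place where the topological assumption on $T$ is used, and it reduces to the classical fact that a continuous function on the compact cylinder $[a,b]\times K$ is uniformly continuous. Everything else is routine linearity and dominated/Riemann convergence.
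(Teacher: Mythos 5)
Your proof is correct and follows essentially the same route as the paper: approximate $g$ by Riemann sums $g_n$, note they are bounded and converge locally uniformly, apply the hypothesis on $T$ to pass to the limit in $Tg_n$, and use continuity of $t\mapsto (Tf(t,\cdot))(x)$ (itself a consequence of the same hypothesis on $T$) to identify the limit of the Riemann sums on the right-hand side with the integral. You merely spell out two details the paper leaves implicit, namely the continuity of $g$ and of $t\mapsto(Tf(t,\cdot))(x)$.
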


\begin{proof}
Showing that $g\in C_b(\Omega)$ is an easy task left to the reader.

To prove the last part of the proof, for any $n\in\N$, let $g_n=\sum_{k=0}^{n-1}f(t_k,\cdot)(t_{k+1}-t_k)$,
where $t_k=a+k(b-a)/n$ for $k\in\{0,\ldots,n\}$. Clearly, $g_n$ converges to $g$ locally uniformly in $\Omega$. Further, $\|g_n\|_{\infty}\le \|f\|_{\infty}(b-a)$ for any $n\in\N$. Hence,
$Tg_n$ converges to $Tg$ locally uniformly in
$\Omega$ as $n\to +\infty$. As is immediately seen,
$(Tg_n)(x)=\sum_{k=0}^{n-1}(Tf(t_k,\cdot))(x)(t_{k+1}-t_k)$ for any $x\in\Omega$ and $n\in\N$.
Since the function $Tf$ is continuous in $[a,b]\times\Omega$, the same arguments as above show that
$(Tg_n)(x)$ converges to $\int_a^b(Tf(s,\cdot))(x)ds$ as $n\to +\infty$, and we are done.
\end{proof}

\end{document}